\pgfplotsset{compat=1.13}
\renewcommand{\sslash}{\mathbin{/\mkern-6mu/}}
\newcommand{\C}{\mathbb{C}}
\newcommand{\Z}{\mathbb{Z}}
\newcommand{\Q}{\mathbb{Q}}
\newcommand{\R}{\mathbb{R}}
\newcommand{\G}{\mathbb{G}}
\newcommand{\HH}{\mathcal{H}}
\newcommand{\J}{\mathcal{J}}
\newcommand{\OO}{\mathcal{O}}
\newcommand{\PP}{\mathbb{P}}
\newcommand{\F}{\mathcal{F}}
\newcommand{\E}{\mathcal{E}}
\newcommand{\LL}{\mathcal{L}}
\newcommand{\mm}{\mathfrak{m}}
\newcommand{\M}{\mathcal{M}}
\newcommand{\MM}{\mathfrak{M}}
\newcommand{\UU}{\mathcal{U}}
\newcommand{\Hom}{\mathcal{H}om}
\DeclareMathOperator{\Pic}{Pic}
\DeclareMathOperator{\spec}{Spec}
\DeclareMathOperator{\proj}{Proj}
\DeclareMathOperator{\sym}{Sym}
\DeclareMathOperator{\supp}{supp}
\newtheorem{theorem}{Theorem}[subsection]
\newtheorem{lemma}[theorem]{Lemma}
\newtheorem{prop}[theorem]{Proposition}
\newtheorem{cor}[theorem]{Corollary}
\newtheorem{defn}[theorem]{Definition}
\newtheorem{thmx}{Theorem}
\theoremstyle{definition}
\numberwithin{equation}{section}
\theoremstyle{remark}
\newtheorem*{remark}{Remark}
\title[GIT Constructions of Compactified Universal Jacobians over $\overline{\M}_{g,n}(X, \beta)$]{GIT Constructions of Compactified Universal Jacobians over Stacks of Stable Maps}
\author{George Cooper}
\address{Centro di Ricerca Ennio De Giorgi, Collegio Puteano, Scuola Normale Superiore, Piazza dei Cavalieri, 3, I-56100 PISA}
\email{george.cooper@sns.it}
\begin{document}
	
\subjclass[2020]{14H10, 14H40, 14H60; 14L24, 16G20}
	
	\begin{abstract}
		We prove that any compactified universal Jacobian over the stack $\overline{\M}_{g,n}(X, \beta)$ defined using torsion-free sheaves which are Gieseker semistable with respect to a relatively ample invertible sheaf over the universal curve admits a projective good moduli space which can be constructed using GIT, and that the same is true for analogues parametrising semistable sheaves of higher rank. We also prove that for different choices of invertible sheaves, the corresponding good moduli spaces are related by a finite number of ``Thaddeus flips". As a special case of our methods, we provide a new GIT construction of the universal Picard variety of Caporaso and Pandharipande.
	\end{abstract}
	
	\maketitle
	
	\setcounter{tocdepth}{1}
	\tableofcontents
	
	\section{Introduction}
	
	This paper studies compactified universal Jacobians over the stack of stable maps $\overline{\M}_{g,n}(X, \beta)$, along with their analogues involving semistable torsion-free sheaves of rank $r > 1$, where the stability condition is defined using an ample invertible sheaf $\LL_{\UU}$ on the universal curve over $\overline{\M}_{g,n}(X, \beta)$. The main result is that these stacks admit projective good moduli spaces which are Geometric Invariant Theory (GIT) quotients, and that variation of GIT (VGIT) applies when the sheaf $\LL_{\UU}$ is varied. 
	
	A lot of recent attention has been given to the study of compactified universal Jacobians over the stack $\overline{\M}_{g,n}$ of $n$-pointed Deligne-Mumford stable curves of genus $g$ (\cite{abreupacinitrop} \cite{abreupagani} \cite{bfmv} \cite{caporasoneron} \cite{caporasochrist} \cite{casalainakassvivianilocal} \cite{casalainakassviviani} \cite{dudin} \cite{holmeskasspagani} \cite{kptheta} \cite{kasspagani} \cite{melopicard} \cite{melopicardmarked} \cite{melocuj} \cite{tropunivjacobian} \cite{meloviviani} \cite{paganitommasi}). These are compactifications of the stack of all line bundles of a given degree over smooth marked curves whose boundary objects consist of rank one torsion-free sheaves over singular marked stable curves.\footnote{Strictly speaking, the condition that the sheaves in the boundary are simple is also imposed, however we \emph{do not} make this requirement.} These stacks have been successfully used to construct extensions of the universal Abel map from from $\M_{g,n}$ to $\overline{\M}_{g,n}$, which in turn provides one way of defining the \emph{double ramification cycle} on $\overline{\M}_{g,n}$ (for more details on this connection see for instance the introduction of \cite{abreupacini} and the references therein).
	
	In order to compactify the stack of all line bundles over smooth curves (or more generally all slope semistable locally free sheaves), in practice a stability condition needs to be chosen. One way of doing this is to fix a relatively ample invertible sheaf $\LL_{\UU}$ on the universal curve over $\overline{\M}_{g,n}$ and then require that all torsion-free sheaves in the boundary are Gieseker semistable with respect to $\LL_{\UU}$. Melo in \cite{melocuj} proved that if all $\LL_{\UU}$-semistable sheaves are $\LL_{\UU}$-stable then the resulting compactified universal Jacobian admits a projective coarse moduli space. Melo's argument makes use of the criterion of Kollár \cite{kollar} to prove that the coarse moduli spaces are projective, and does not address the case where there are strictly $\LL_{\UU}$-semistable sheaves.
	
	If $n = 0$ and if $\LL_{\UU}$ is the dualising sheaf of the universal family, the resulting compactified universal Jacobian is also known to admit a projective good moduli space (even though there are strictly semistable sheaves), known as the \emph{universal Picard variety}; at least two GIT constructions of this space are known, the first due to Caporaso \cite{caporaso} and the second due to Pandharipande \cite{pand} (Pandharipande deals with the case where the sheaves are allowed to have any rank $r$, not just $r = 1$). The constructions of Caporaso and Pandharipande make heavy use of the GIT construction of the coarse moduli space $\overline{M}_g$ of Deligne-Mumford stable curves due to Gieseker \cite{gieseker}. In place of Gieseker's construction, we use Baldwin and Swinarski's GIT construction of the coarse moduli space $\overline{M}_{g,n}(X, \beta)$. We also use the GIT construction of Greb, Ross and Toma \cite{grebrosstoma} \cite{grebrosstomamaster} of moduli spaces of multi-Gieseker semistable sheaves on a projective scheme, which allows us to deal with multiple stability conditions at the same time. As a special case of our approach, we provide a new GIT construction of the Caporaso-Pandharipande moduli space. 

We also use the methods of \cite{grebrosstoma} and \cite{grebrosstomamaster} to establish the existence of rational linear wall-chamber decompositions on positive-dimensional spaces of stability conditions, with the property that as the stability condition varies within the interior of the chamber, the resulting stack remains unchanged, and that crossing a wall corresponds to modifying the good moduli space by a Thaddeus flip (also known as a VGIT flip). Unlike the wall-chamber decomposition described by Kass and Pagani in \cite{kasspagani} (in the case where the sheaves are all of rank one and where the base stack is $\overline{\M}_{g,n}$), the wall-chamber decompositions described in this paper rely on auxiliary choices of relatively ample invertible sheaves over the universal curve over the base stack $\overline{\M}_{g,n}(X , \beta)$, and as such the decompositions are not intrinsic to the stack $\overline{\M}_{g,n}(X , \beta)$.
	
	\subsection*{Summary of Results} Throughout we work over the field of complex numbers $\C$. Let $\overline{\M}_{g,n}(X, \beta)$ be the stack of genus $g$ stable maps with $n$ marked points to a projective variety $X$ of class $\beta$, and let $\M_{g,n}(X, \beta)$ be the open substack where the source curves are non-singular. Here and throughout this paper we assume that the discrete invariants are chosen so that the stack $\M_{g,n}(X, \beta)$ is non-empty. 
	
	Given a relatively ample $\Q$-invertible sheaf $\LL_{\UU}$ on the universal curve $\pi_{\UU} : \UU \overline{\M}_{g,n}(X, \beta) \to \overline{\M}_{g,n}(X, \beta)$, define $\overline{\J}_{g,n}^{d,r,ss}(X, \beta)(\LL_{\UU})$ to be the $\G_m$-rigidification of the stack parametrising flat and proper families of degree $d$, uniform rank $r$, torsion-free coherent sheaves over objects of $\overline{\M}_{g,n}(X, \beta)$ which are fibrewise Gieseker semistable with respect to $\LL_{\UU}$ (for more details see Section \ref{section: introducing the stacks}). Fix relatively ample $\Q$-invertible sheaves $\LL_1, \dots, \LL_k$, and set $\Sigma := (\Q^{\geq 0})^k \setminus \{0\}$. For each $\sigma = (\sigma_1, \dots, \sigma_k) \in \Sigma$,\footnote{Since $\lambda \sigma$ defines the same stability condition as $\sigma$ for each positive rational number $\lambda$, we are free to assume that $\sum \sigma_i = 1$ as necessary.} form the $\Q$-invertible sheaf $\LL_{\sigma} = \bigotimes_i \LL_i^{\sigma_i}$, and consider the resulting stacks over $\overline{\M}_{g,n}(X, \beta)$
	$$ \overline{\J}(\sigma) := \overline{\J}_{g,n}^{d,r,ss}(X, \beta)(\LL_{\sigma}). $$
	We say $\sigma \in \Sigma$ is \emph{positive} if each $\sigma_i > 0$, and \emph{degenerate} otherwise. Next, fix a finite subset $\mathfrak{S} \subset \Sigma$. We may now state our main result, which follows from Corollary \ref{cor: quotient stack description 2}, Theorem \ref{thm: good moduli spaces as GIT quotients} and Proposition \ref{prop: description of the closed points}.
	
	\begin{thmx} \label{thm: Theorem A}
		There exists a quasi-projective scheme $Z_r = Z_{r, \mathfrak{S}}$ and a reductive group $K$ acting on $Z_r$, such that for each $\sigma \in \mathfrak{S}$, there is a $K$-invariant open subset $Z_r^{\sigma-ss} \subset Z_r$, obtained as a GIT semistable locus for an appropriate linearisation determined by $\sigma$, which admits a good quotient $Z_r^{\sigma-ss} \sslash K$. Moreover:
		\begin{enumerate}
			\item if $\sigma$ is positive, then $\overline{\J}(\sigma)$ is isomorphic to the quotient stack $[Z_r^{\sigma-ss} / K]$;
			\item if $\sigma$ is either positive or degenerate, the stack $\overline{\J}(\sigma)$ admits a good moduli space $\overline{J}(\sigma) = \overline{J}_{g,n}^{d,r,ss}(X, \beta)(\LL_{\sigma})$, which is isomorphic to the good quotient $Z_r^{\sigma-ss} \sslash K$, and admits a natural morphism to the coarse moduli space $\overline{M}_{g,n}(X, \beta)$ of $\overline{\M}_{g,n}(X, \beta)$; and
			\item the fibre of $\overline{J}(\sigma)$ over $\zeta = [(C \to X; x_1, \dots, x_n)] \in M_{g,n}(X, \beta)$ is given by the quotient of the moduli space of degree $d$, rank $r$, slope-semistable vector bundles on $C$ by the action of the automorphism group of the stable map $\zeta$.
		\end{enumerate}
	\end{thmx}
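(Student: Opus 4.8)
\emph{Sketch of an approach.} The plan is to realise each $\overline{\J}(\sigma)$ as a quotient stack by bootstrapping two existing GIT constructions: the Baldwin--Swinarski presentation of $\overline{M}_{g,n}(X,\beta)$ as a GIT quotient of a locally closed subscheme of a Hilbert scheme, and a relative version of the Greb--Ross--Toma construction of moduli of multi-Gieseker semistable sheaves. First I would write $\overline{\M}_{g,n}(X,\beta)\cong[H/G]$ with $G$ a reductive group (an $SL$ acting on the ambient projective space) and $H$ a $G$-invariant locally closed subscheme of a Hilbert scheme of appropriately embedded stable maps, all of whose points are GIT-stable, so that $\overline{M}_{g,n}(X,\beta)=H\sslash G$; let $\pi:\mathcal{C}\to H$ denote the pulled-back universal curve, carrying the pullbacks $\LL_1,\dots,\LL_k$ of the fixed relatively ample sheaves together with the tautological relatively very ample sheaf $\OO_{\mathcal{C}}(1)$.

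Next I would produce a single parameter space valid for the whole finite set $\mathfrak{S}$. Since $\mathfrak{S}$ is finite, uniform boundedness of fibrewise Gieseker-semistable torsion-free sheaves of degree $d$ and uniform rank $r$ (with respect to $\LL_\sigma$ for $\sigma\in\mathfrak{S}$) gives one integer $m\gg0$ such that after twisting by $\OO_{\mathcal{C}}(m)$ every such sheaf becomes globally generated with vanishing higher cohomology and has a fixed Hilbert polynomial $P$; hence it is a point of a relative Quot scheme $\mathcal{Q}=\quot_{\mathcal{C}/H}\bigl(\OO_{\mathcal{C}}(-m)^{\oplus P(m)},P\bigr)$, proper over $H$ and equipped with a $G\times GL_{P(m)}$-action. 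Let $R\subseteq\mathcal{Q}$ be the invariant open locus where the universal quotient $\F$ is torsion-free of uniform rank $r$ and the induced map $\OO_H^{\oplus P(m)}\to\pi_*\bigl(\F(m)\bigr)$ is an isomorphism; rigidifying the scalars $\G_m\subset GL_{P(m)}$ (which account for the $\G_m$ of the $\G_m$-rigidification) then gives, for each $\sigma$, a description of $\overline{\J}(\sigma)$ as the open substack of $\bigl[R/\bigl((G\times GL_{P(m)})/\G_m\bigr)\bigr]$ cut out by fibrewise $\LL_\sigma$-Gieseker semistability.

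Now I would run GIT on this picture. Applying Greb--Ross--Toma to the family $\mathcal{C}/H$ with the polarisations $\LL_i$ produces a $G\times GL_{P(m)}$-equivariant embedding of $\mathcal{Q}$ into a product of Grassmannian bundles over $H$; let $Z_r=Z_{r,\mathfrak{S}}$ be the resulting quasi-projective scheme, acted on by the reductive group $K=(G\times GL_{P(m)})/\G_m$. For each $\sigma$ one takes the linearisation to be a combination of a large multiple of the pullback of the Baldwin--Swinarski linearisation with the Greb--Ross--Toma linearisation $\NN_\sigma$ determined by $\sigma$; choosing the ratio so that the curve part dominates ensures that $\sigma$-semistability on $Z_r$ imposes no condition on the underlying stable map --- every point of $H$ being GIT-stable for the curve polarisation --- while over a fixed $h\in H$ it is governed by $\NN_\sigma$ and, for \emph{positive} $\sigma$, coincides with fibrewise $\LL_\sigma$-Gieseker semistability by Greb--Ross--Toma (one uses here that GIT-semistable points of $\mathcal{Q}$ automatically lie in $R$). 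This yields part (1), and then $Z_r^{\sigma-ss}\sslash K$ is automatically a good moduli space of $[Z_r^{\sigma-ss}/K]\cong\overline{\J}(\sigma)$, projective over $\overline{M}_{g,n}(X,\beta)=H\sslash G$ since $\mathcal{Q}$ is proper over $H$. For degenerate $\sigma$ the identification of GIT-semistability with Gieseker semistability can fail, so the quotient-stack description need not hold; instead one produces the natural morphism $\overline{\J}(\sigma)\to Z_r^{\sigma-ss}\sslash K$ (the GIT quotient itself still existing) and verifies Alper's criteria --- cohomological affineness, and the fact that the closed points on each side correspond to the same $S$-equivalence classes of $\LL_\sigma$-semistable sheaves --- to conclude that it is a good moduli space, giving part (2).

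For part (3): over a point $\zeta=[(C\to X;x_1,\dots,x_n)]\in M_{g,n}(X,\beta)$ the source curve $C$ is smooth, so torsion-free sheaves on it are locally free and Gieseker semistability with respect to the ample sheaf $\LL_\sigma|_C$ reduces to slope-semistability (independently of $\sigma$, the lower-order terms of the Hilbert polynomial contributing nothing on a smooth curve). Since $H$ consists of GIT-stable points the orbit over $\zeta$ is closed, and the Luna slice theorem identifies the fibre of $\overline{J}(\sigma)=Z_r^{\sigma-ss}\sslash K$ over $\zeta$ with the GIT quotient of the fibre of $Z_r^{\sigma-ss}$ over a lift $h\in H$ by the stabiliser $\bigl(\mathrm{Stab}_G(h)\times GL_{P(m)}\bigr)/\G_m=\mathrm{Aut}(\zeta)\times PGL_{P(m)}$; performing this in two stages produces first the Seshadri moduli space $\overline{M}_C(r,d)$ of slope-semistable bundles on $C$ and then its quotient by the finite group $\mathrm{Aut}(\zeta)$, as claimed. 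The main obstacle is the third step: simultaneously making the Baldwin--Swinarski part of the linearisation dominant enough that no stable map is destabilised while keeping the Greb--Ross--Toma part faithful enough that fibrewise GIT semistability is exactly $\LL_\sigma$-Gieseker semistability, uniformly over $\mathfrak{S}$ --- an analogue of Pandharipande's key estimate in his construction of the universal Picard variety --- together with the separate treatment of the degenerate locus, where the clean quotient-stack description is unavailable and the good moduli space property must be checked directly.
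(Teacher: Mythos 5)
Your overall architecture --- Baldwin--Swinarski downstairs, a relative sheaf-theoretic GIT problem upstairs, a linearisation in which the base part dominates, and Luna slices over stable points of the base to compute fibres --- is the same as the paper's. But two of your steps are genuine gaps rather than reductions to known results. First, your parameter space and group are not what Greb--Ross--Toma actually supply. Their construction does not embed a Quot scheme $\quot_{\mathcal{C}/H}(\OO_{\mathcal{C}}(-m)^{\oplus P(m)},P)$ with a single $GL_{P(m)}$-action into Grassmannian bundles; it replaces sheaves by modules over a sheaf of algebras $A$ (representations of a labelled quiver with $2k$ vertices built from $H_{ij}=\pi_{\ast}(\LL_i^{-m_1}\otimes\LL_j^{m_2})$), takes as parameter space the affine bundle $R=\prod_{i,j}\underline{\mathrm{Hom}}(\OO^{d_{i1}}\otimes H_{ij},\OO^{d_{j2}})$ over the base, and acts by $\prod_j(GL_{d_{j1}}\times GL_{d_{j2}})/\Delta$ with a King-type character linearisation $\chi_{\theta_\sigma}$; the identification of GIT semistability with fibrewise multi-Gieseker semistability is then their Theorem 8.1, imported wholesale. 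If you insist on a single Quot scheme and a single $GL_{P(m)}$, you are really running a Simpson/Pandharipande construction for each of the finitely many $\Q$-polarisations $\LL_\sigma$, and the ``analogue of Pandharipande's key estimate'' that you defer to the end is precisely the content of the theorem --- it is not supplied by any cited result in your setup, so parts (1) and (2) do not follow as written. The paper avoids this by using the GRT comparison theorem together with Hu's relative GIT (semistability for $\OO(1,d)$ with $d\gg 0$ equals the preimage of the base semistable locus), so no new estimate is needed.

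Second, your treatment of the degenerate case is not a proof. ``Verifying Alper's criteria --- cohomological affineness and matching of closed points'' for a map $\overline{\J}(\sigma)\to Z_r^{\sigma-ss}\sslash K$ is not something you can do without first presenting $\overline{\J}(\sigma)$ as a quotient stack of some GIT-semistable locus; matching of closed points is in any case not one of Alper's defining conditions. The paper's route is to project onto the subquiver indexed by the nonzero entries of $\sigma$ (the map $\phi':R\to R'$), use Proposition 2.3 of the GRT master-space paper to get a $G'$-equivariant isomorphism $D^{\sigma-ss}\sslash G''\cong (Q')^{[\sigma'-ss]}$ for the truncated positive parameter $\sigma'$, and thereby exhibit the degenerate $\overline{\J}(\sigma)$ as a quotient stack $[(Z'_r)^{\sigma'-ss}/K']$ after all, so that the good moduli space again comes from Alper's GIT theorem applied to a character-linearised affine situation. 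Your part (3) is essentially correct and matches the paper (smoothness of $C$ reduces Gieseker to slope semistability, and the fibre of the quotient over a stable orbit is the quotient of the fibre by the stabiliser, which is identified with $\mathrm{Aut}(\zeta)$), modulo the caveat that the relevant stabiliser computation is the image of $SL(W)_{j_0}$ in $PGL(W)$ rather than a product with all of $PGL_{P(m)}$.
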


\begin{remark}
	In the case where $\sigma$ is degenerate, it is still possible to use Theorem \ref{thm: Theorem A} to exhibit $\overline{\J}(\sigma)$ as a quotient stack of the form $[(Z_r')^{\sigma'-ss}/K']$ for an appropriate subgroup $K' \subset K$, by first omitting the invertible sheaves $\LL_i$ for which the corresponding entry $\sigma_i = 0$, letting $\sigma'$ be the vector obtained by omitting all zero entries from $\sigma$; see the remark following Corollary \ref{cor: quotient stack description 2}.
\end{remark}

As an application of the construction underlining Theorem \ref{thm: Theorem A} we prove (by adapting the arguments given in \cite{grebrosstomamaster}) the following result, concerning what happens at the level of the moduli spaces $\overline{J}(\sigma)$ as the stability condition $\sigma \in \Sigma$ is allowed to vary, after the $\Q$-invertible sheaves $\LL_1, \dots, \LL_k$ have been fixed. This result follows from Proposition \ref{prop: wall chamber structure} and Theorem \ref{thm: mumford thaddeus principle}.

\begin{thmx} \label{thm: Theorem B}
	The set $\Sigma' = \{\sigma \in \Sigma : \sum \sigma_i = 1\}$ is cut into chambers by a finite number of rational linear walls,\footnote{We allow for the possibility that all of $\Sigma'$ is a wall.} such that the moduli stack $\overline{\J}(\sigma)$ is unchanged as $\sigma$ varies in the interior of a chamber. For any $\sigma \in \Sigma'$, all $\LL_{\sigma}$-semistable sheaves are $\LL_{\sigma}$-stable, and hence the corresponding stack $\overline{\J}(\sigma)$ is Deligne-Mumford, if and only if $\sigma$ does not lie in any wall. Given $\sigma_1$, $\sigma_2 \in \Sigma'$, the moduli spaces $\overline{J}(\sigma_i)$ ($i = 1,2$) are related by a finite number of Thaddeus flips (cf. Definition \ref{defn: thaddeus flip}) through moduli spaces of the form $\overline{J}(\sigma')$, $\sigma' \in \Sigma'$.
\end{thmx}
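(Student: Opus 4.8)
The plan is to transport the variation‑of‑GIT picture on the parameter scheme $Z_r$ of Theorem \ref{thm: Theorem A} to the parameter space $\Sigma'$ of stability vectors. Recall from the construction underlying Theorem \ref{thm: Theorem A} that for each $\sigma$ there is a $K$‑linearised ample $\Q$‑line bundle $L_\sigma$ on $Z_r$ whose GIT‑semistable locus is $Z_r^{\sigma-ss}$ and whose good quotient is $\overline{J}(\sigma)$. The first step is to observe that $\sigma \mapsto L_\sigma$ is the restriction to $\Sigma'$ of an affine‑linear map into the Néron–Severi space of $K$‑linearised $\Q$‑line bundles on $Z_r$: since $\LL_\sigma = \bigotimes_i \LL_i^{\sigma_i}$ and $Z_r$ is built inside a product of projective bundles with one factor per $\LL_i$, the contribution of $\LL_i$ to the linearisation is a fixed class scaled by $\sigma_i$. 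Consequently the abstract VGIT wall‑and‑chamber decomposition of Dolgachev–Hu and Thaddeus, in the form adapted to the (non‑projective, multi‑Gieseker) setting in \cite{grebrosstoma,grebrosstomamaster}, pulls back along this map to a finite decomposition of $\Sigma'$ into rational polyhedral chambers separated by rational linear walls. This is the content of Proposition \ref{prop: wall chamber structure}.

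The second step is the analysis of what is constant on a chamber and what detects a wall. For each relevant one‑parameter subgroup $\lambda$ of $K$ — of which there are only finitely many up to the equivalence that affects (semi)stability, by the boundedness of the family of $\LL_\sigma$‑semistable sheaves over $\Sigma'$ — the Hilbert–Mumford function $\mu^{L_\sigma}(z,\lambda)$ is affine‑linear in $\sigma$; within the interior of a chamber no such function changes sign, so $Z_r^{\sigma-ss} = Z_r^{\sigma-s}$ is locally constant, and translating through the dictionary between GIT (semi)stability of points of $Z_r$ and fibrewise Gieseker (semi)stability of the corresponding sheaves shows that $\overline{\J}(\sigma)$ is unchanged on the chamber. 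The same computation shows that $\sigma$ lies on a wall exactly when some destabilising $\mu^{L_\sigma}(z,\lambda)$ vanishes at a semistable point $z$, which on the sheaf side is precisely the existence of a strictly $\LL_\sigma$‑semistable sheaf; away from the walls, $\LL_\sigma$‑semistability coincides with $\LL_\sigma$‑stability, the $\G_m$‑rigidified automorphism groups are finite and unramified, and so $\overline{\J}(\sigma)$ is Deligne–Mumford.

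The third step establishes the Thaddeus‑flip statement. Given $\sigma_1,\sigma_2 \in \Sigma'$, use convexity of $\Sigma'$ to join them by a line segment, and then perturb it within $\Sigma'$ (possible unless all of $\Sigma'$ is a single wall, in which case there is nothing to prove) so that it meets the walls transversally and one at a time, at finitely many parameters $\tau_1,\dots,\tau_N$; finiteness is guaranteed by the finiteness of the wall set from Proposition \ref{prop: wall chamber structure}. At each crossing $\tau_j$ one invokes the single‑wall VGIT comparison of \cite{grebrosstomamaster}: the two adjacent chamber quotients $\overline{J}(\sigma')$ and $\overline{J}(\sigma'')$ admit proper birational morphisms from a common modification and fit into a diagram over the wall quotient $\overline{J}(\tau_j)$ exhibiting them as a Thaddeus flip. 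Composing the resulting $N$ flips, all passing through moduli spaces of the form $\overline{J}(\sigma')$ with $\sigma' \in \Sigma'$, yields the statement; this is Theorem \ref{thm: mumford thaddeus principle}.

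The main obstacle is not the formal VGIT bookkeeping but the uniform‑in‑$\sigma$ comparison dictionary: one must show that GIT (semi)stability of a point of $Z_r$ with respect to $L_\sigma$ matches fibrewise Gieseker (semi)stability of the associated sheaf with respect to $\LL_\sigma$ for \emph{all} $\sigma \in \Sigma'$ simultaneously, which forces one to control the relevant one‑parameter subgroups through a single boundedness statement so that only finitely many linear inequalities in $\sigma$ arise, and to verify that $L_\sigma$ varies affine‑linearly and stays ample on the whole of $\Sigma'$ (not merely on one chamber). These are exactly the hypotheses under which the machinery of \cite{grebrosstoma,grebrosstomamaster} can be applied verbatim, so the bulk of the work is in checking them in the present setting over $\overline{\M}_{g,n}(X,\beta)$.
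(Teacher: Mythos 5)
Your third step (chaining Thaddeus flips along the segment joining $\sigma_1$ and $\sigma_2$ through the wall quotients) is essentially the paper's Theorem \ref{thm: mumford thaddeus principle}, except that no perturbation of the segment is needed and the flips are not claimed to be birational modifications --- the paper explicitly drops birationality from its notion of Thaddeus flip, and the diagrams come simply from the inclusions $Z_r^{\sigma_{t_i}-ss} \subset Z_r^{\sigma_{t_i'}-ss} \supset Z_r^{\sigma_{t_{i+1}}-ss}$ obtained by taking limits in the slope inequalities. The genuine gap is in your first step. You propose to obtain the wall-and-chamber decomposition of $\Sigma'$ by pulling back a Dolgachev--Hu/Thaddeus VGIT decomposition along an affine-linear map $\sigma \mapsto L_\sigma$ into the space of $K$-linearised ample classes on $Z_r$. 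Two things go wrong. First, the master space $Z_r = Z_{r,\mathfrak{S}}$ is only defined \emph{after} a finite set $\mathfrak{S} \subset \Sigma'$ of stability parameters has been fixed (it is the union of the closures $\overline{Q^{[\sigma-ss]}}$ over $\sigma \in \mathfrak{S}$), so there is no single $K$-scheme carrying linearisations for all $\sigma \in \Sigma'$ from which finiteness of the decomposition could be read off; the argument is circular unless finiteness is established beforehand. Second, the linearisations here are character twists $\OO_R(\chi_{\theta_\sigma})$ of the trivial bundle on a scheme affine over $I$, and $\sigma \mapsto \theta_\sigma$ is not affine-linear (its components are $\sigma_j/\sum_i \sigma_i d_{i1}$ and $-\sigma_j/\sum_i \sigma_i d_{i2}$, with two different normalisations), so neither the ``fixed class scaled by $\sigma_i$'' picture nor an ampleness check is the right frame.

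The paper instead defines the walls directly on $\Sigma'$ in sheaf-theoretic terms: for each saturated subsheaf $F$ of a semistable sheaf $E$ over a stable map $\zeta$ that is destabilising for some parameter, the locus $W_{F,\zeta} = \{\sigma : \mu^{\sigma}(F) = \mu^{\sigma}(E)\}$ is, after cross-multiplying the slopes \eqref{eqn: multi slope stability nodal curve}, the zero set of a function genuinely linear in $\sigma$ because Hilbert polynomials on curves have degree one --- this, not linearity of the linearisation, is where rational linearity comes from. Finiteness of the walls is the key lemma your proposal omits: the family $\mathcal{S}$ of potentially destabilising saturated subsheaves is bounded (a Grothendieck-type statement building on Lemma \ref{lem: boundedness result}), and $W_{F,\zeta}$ depends only on $\chi(F)$ and the multirank of $F$, so only finitely many walls occur. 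With that in hand, constancy of $\overline{\J}(\sigma)$ on chambers and the implication ``no wall $\Rightarrow$ semistable $=$ stable'' follow from the convexity argument of Lemma \ref{lem: stability inside a chamber} and Proposition \ref{prop: wall chamber structure}, and only then does one choose $\mathfrak{S}$ (one representative per chamber met by the segment) and form $Z_{r,\mathfrak{S}}$. Your instinct that boundedness must reduce everything to finitely many linear inequalities in $\sigma$ is correct, but it has to be applied to the destabilising subsheaves before the master space is built, not to one-parameter subgroups of $K$ afterwards.
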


\subsection*{Comparison with Other Work} In the case where $r = 1$ the stacks $\overline{\J}_{g,n}^{d,1,ss}(X, \beta)(\LL_{\UU})$ are examples of compactified universal Jacobians. The paper \cite{melocuj} treats these stacks in the case where $X$ is a point and $\overline{\M}_{g,n}(X, \beta)$ is replaced with the stack of $n$-pointed genus $g$ curves, or more generally any open substack of the stack of $n$-pointed genus $g$ prestable curves. In particular, Melo considers stability conditions defined with respect to a vector bundle on the universal curve $\pi_{\UU} : \UU \overline{\M}_{g,n} \to \overline{\M}_{g,n}$, generalising the stability conditions introduced by Esteves in \cite{estevesreljacobian}, and for explicit polarisations determines functoriality properties of the corresponding compactified universal Jacobians, such as compatibility with forgetful and clutching morphisms.

Meanwhile, Kass and Pagani \cite{kasspagani} study stability conditions defined in terms of suitable functions $\phi$ defined on dual graphs of stable curves in $\overline{\M}_{g,n}$, generalising the approach of Oda and Seshadri \cite{odaseshadri}, and describe an explicit wall-chamber decomposition on the space of stability conditions, which in their notation is $V_{g,n}^d$, where $d$ is the degree of the rank $1$ torsion-free sheaves under consideration. It should be noted that Kass-Pagani and Melo study the same family of stability conditions (see \cite[Remark 4.6]{kasspagani}), and that each of the Kass-Pagani stability conditions can be realised as a (twisted) Gieseker stability condition with respect to a relatively ample invertible sheaf on the universal curve over $\overline{\M}_{g,n}$ (see \emph{loc. cit.} Corollary 4.3), and so, by the final remark of Section \ref{section: substacks defined by polarisations}, are covered by our results. Kass and Pagani also successfully characterise for any $\phi \in V_{g,n}^d$ the locus of indeterminacy of the extension $\overline{\M}_{g,n} \dashrightarrow \J_{g,n}^d(\phi)$ (where $\J_{g,n}^d(\phi)$ denotes Kass and Pagani's compactified universal Jacobian) of the Abel-Jacobi map
$$ \alpha_{k,\underline{d}} : (C; x_1, \dots, x_n) \mapsto \omega_C^{-k}(d_1x_1 + \cdots + d_nx_n), \quad (C; x_1, \dots, x_n) \in \M_{g,n}, $$
and in particular describe all stability conditions $\phi$ for which $\alpha_{k,\underline{d}}$ extends to $\overline{\M}_{g,n}$.

The main difference between the decomposition of $V_{g,n}^d$ and the decompositions described by Theorem \ref{thm: Theorem B} is that the latter decompositions depend on the choice of the $\Q$-invertible sheaves $\LL_1, \dots, \LL_k$, whereas the decomposition of $V_{g,n}^d$ only depends on the discrete invariants $g, n$ and $d$. It would be interesting to determine how the wall-chamber decompositions relate to each other (given explicit $\Q$-invertible sheaves $\LL_1, \dots, \LL_k$), as well as whether the decomposition of \cite{kasspagani} extend to higher rank sheaves and to stable maps.

It is possible to construct the good moduli space of $\overline{\J}_{g,n,d,r}^{ss}(X, \beta)(\LL)$ via GIT in a manner which closely follows the construction of Pandharipande \cite{pand}, by applying relative GIT to an appropriate Quot scheme over the parameter space considered by Baldwin--Swinarski \cite{baldwinswinarski} (in place of the parameter space considered by Gieseker \cite{gieseker}), making use of the results of Simpson \cite{simpson} concerning GIT constructions of relative moduli spaces of Gieseker semistable sheaves. We remark that the difficult step in Pandharipande's construction is relating GIT and moduli (semi)stability for the fibrewise action on the relative Quot scheme; once this has been established, the rest of Pandharipande's construction is essentially formal relative GIT. As observed by Pandharipande himself (cf. \cite[Page 433]{pand}), the work of Simpson can instead be used to fully solve the fibrewise GIT problem. 
	
The disadvantage of this approach, as compared with that used to prove Theorem \ref{thm: Theorem A}, is that it is not possible to implement VGIT, since different choices of linearisations $\mathcal{L}$ necessitate working with different relative Quot schemes.

For higher ranks, there are alternative compactifications of moduli spaces of slope-semistable vector bundles over the moduli stack of smooth curves, utilising vector bundles on semistable curves instead of semistable torsion-free sheaves on stable curves (see for instance \cite{fringuelli}, \cite{schmitthilbert} and \cite{teixidor}). We do not consider these compactifications in this paper.

\subsection*{Outline of the Paper}

After covering the preliminaries in Section \ref{section: preliminaries}, we introduce in Section \ref{section: introducing the stacks} the stacks $\overline{\J}_{g,n}^{d,r,ss}(X, \beta)(\LL_{\UU})$, and show that in the case where all $\LL_{\UU}$-semistable sheaves are $\LL_{\UU}$-stable, these stacks are Deligne-Mumford and admit perfect relative obstruction theories. We also summarise some basic properties of the stacks $\overline{\J}_{g,n}^{d,r,ss}(X, \beta)(\LL_{\UU})$. In Section \ref{section: BS GRT constructions} we sketch the GIT constructions of Baldwin-Swinarski \cite{baldwinswinarski} and Greb-Ross-Toma \cite{grebrosstoma} \cite{grebrosstomamaster}, indicating how the latter construction extends into a relative setting. At the end of Section \ref{section: BS GRT constructions} we prove a result concerning the existence of further GIT quotients of the moduli space of Greb, Ross and Toma, which allows us to combine together the aforementioned GIT constructions. This is carried out in Section \ref{section: carrying out the construction}, where in the same section we use this to prove Theorem \ref{thm: Theorem A}. We also give in this section a full description of the closed points of the resulting good moduli spaces. Finally in Section \ref{section: wall chamber decompositions} we explain how the master spaces $Z_r$ appearing in the statement of \ref{thm: Theorem A} can be used to prove Theorem \ref{thm: Theorem B}.

\subsection*{Remarks on the Base Field} We have chosen to work with the field $\C$ for simplicity, though our results are valid over any algebraically closed field $k$ of characteristic zero. Indeed, the papers \cite{baldwinswinarski}, \cite{grebrosstoma} and \cite{grebrosstomamaster} all assume that the base field is algebraically closed and of characteristic zero. It is not yet known whether the results of \cite{grebrosstoma} and \cite{grebrosstomamaster} hold in arbitrary characteristic, and the GIT construction of $\overline{M}_{g,n}(X, \beta)$ fails if the characteristic of the base field is non-zero and not sufficiently large (see \cite{baldwinpositive}).

\subsection*{Acknowledgements}

The contents of this paper constitutes a chapter of the author's DPhil thesis at the University of Oxford. The author wishes to thank his DPhil supervisors Frances Kirwan, Jason Lotay and Alexander Ritter, for introducing the author to the problem and for their numerous helpful suggestions and comments, without which this work would not be possible. The author also wishes to thank his thesis examiners Ruadhai Dervan and Dominic Joyce for their corrections and comments on this work; thanks is also due to Andrés Ibáñez Núñez, Henry Liu, Margarida Melo, Nicola Pagani, Dhruv Ranganathan, Julian Ross and Montserrat Teixidor i Bigas for their interest in this work alongside helpful conversations and suggestions.

The author was supported by an EPSRC Doctoral Scholarship (EP/V520202/1).

\subsection*{Competing Interests}

The author declare none.

\subsection*{Conventions and Notation} Throughout this paper we work over the field $\C$ of complex numbers. A \emph{point} of a scheme $S$ is a closed point $s : \spec \C \to S$. All varieties are assumed to be separated, irreducible and of finite type. All group schemes are assumed to be smooth, separated and of finite type.

As in \cite{alvarezconsulking} and \cite{grebrosstoma}, we use the notation
$$ \begin{cases} \text{for } n \gg 0 & \text{to mean } \exists n_0 : \forall n \geq n_0, \\ \text{for } m \gg n \gg 0 & \text{to mean } \exists n_0 : \forall n \geq n_0, \exists m_0 : \forall m \geq m_0, \end{cases} $$
and so on. If $p(t), q(t) \in \R[t]$ are polynomials, we write $p < q$ if for all $m \gg 0$ we have $p(m) < q(m)$, and similarly for $p \leq q$.

We often use the symbol $\mathcal{U}$ to denote universal objects over moduli stacks; for example, $\pi_{\UU} : \UU \overline{\M}_{g,n} \to \overline{\M}_{g,n}$ is the universal curve over the stack $\overline{\M}_{g,n}$. 

A \emph{$\Q$-line bundle} is a formal tensor power $L = N^{\alpha}$, where $N$ is a line bundle and where $\alpha$ is a rational number; $L$ is said to be \emph{ample} if $N$ is ample and if $\alpha > 0$. The \emph{degree} of $L$ is defined by $\deg L = \alpha \deg M$. We make no distinction between line bundles and invertible sheaves.

Suppose $X$ is a projective scheme and $\underline{L} = (L_1, \dots, L_k)$ is a tuple of ample invertible sheaves on $X$. Given a coherent sheaf $E$ over $X$, the \emph{topological type} $\tau = \tau(E)$ of $E$ (with respect to $\underline{L}$) is the tuple $(P_1(t), \dots, P_k(t))$, where $P_j(t)$ is the Hilbert polynomial $\chi(C, E \otimes L_j^t)$ of $E$ with respect to $L_j$. In the case where $X$ is projective over a locally Noetherian base scheme $S$ and we are given relatively ample invertible sheaves $\underline{\LL} = (\LL_1, \dots, \LL_k)$ on $X$, we extend the notion of topological type to $S$-flat sheaves on $X$ in the obvious way.\footnote{This is slightly different to the notion of topological type in \cite{grebrosstoma} and \cite{grebrosstomamaster}; in these papers the topological type of a coherent sheaf is defined to be the homological Todd class $\tau_X(E) \in B(X)_{\Q}$ of $E$ (though see \cite[Remark 1.5]{grebrosstoma}).}

A \emph{curve} is a connected, reduced, projective scheme of pure dimension $1$. The \emph{genus} $g$ of $C$ is the arithmetic genus of $C$. The dualising sheaf of $C$ is denoted $\omega_C$. $C$ is \emph{nodal} if every $p \in \mathrm{sing}(C)$ is a node, that is $\widehat{\OO}_{C,p} \cong \C[[x,y]]/(xy)$. If $(C; x_1, \dots, x_n)$ is a marked curve, we say $C$ is \emph{prestable} if $C$ is nodal and if the markings are distinct and non-singular. A \emph{subcurve} $D$ of a curve $C$ is a closed subscheme of $C$ which is reduced, of pure dimension $1$ (but not necessarily connected). $D$ is a \emph{proper subcurve} of $C$ if $D$ is non-empty and not the whole of $C$. The \emph{complementary subcurve} to $D$ is $D^c = \overline{C \setminus D}$. We set $k_D = |D \cap D^c|$.

	A coherent sheaf $F$ on a curve $C$ is:
\begin{itemize}
	\item of \emph{rank $r$} if $F$ has rank $r$ at the generic point of every component of $C$;
	\item \emph{torsion-free} if $\supp F = C$, and if for every non-zero subsheaf $G \subset F$ one has $\dim \supp G = 1$;
	\item \emph{simple} if $\mathrm{End}(F) = \C$.
\end{itemize}
If $F$ is a rank $r$ torsion-free sheaf on $C$ and $D$ is a subcurve of $C$, we let $F_D$ denote the restriction of $F$ to $D$ modulo torsion and set $\deg_D F = \chi(F_D) - r\chi(\OO_D)$. We define $\deg F = \deg_C F$.
If $C_1, \dots, C_{\rho}$ are the irreducible components of $C$, the \emph{multidegree} of $F$ is the tuple $(\deg_{C_1} F, \dots, \deg_{C_{\rho}} F)$. We have the relation $\deg F = \sum_{j=1}^{\rho} \deg_{C_j} F - \delta(F)$, where $\delta(F)$ is the number of nodes where $F$ is not locally free.

A \emph{family of curves} is a proper surjective flat morphism $f : \mathcal{C} \to S$ of finite presentation whose geometric fibres are curves. A \emph{family of coherent sheaves} on a family of curves $f : \mathcal{C} \to S$ is a coherent sheaf on $\mathcal{C}$ which is $S$-flat and of finite presentation.

Following \cite{behrendmanin}, a \emph{curve class} on a projective variety $X$ is an element of the semigroup
$$ H_2(X, \Z)_+ = \{ \beta \in \mathrm{Hom}_{\Z}(\Pic X, \Z) : \beta(L) \geq 0 \text{ for all ample } L \}. $$
If $f : C \to X$ is a morphism from a prestable curve $C$, the locally constant function $L \mapsto \deg f^{\ast} L$ on $\Pic X$ is denoted as $f_{\ast}[C] \in H_2(X,\Z)_+$.

Whenever we work with stacks of stable maps $\overline{\M}_{g,n}(X, \beta)$, we implicitly assume that the discrete invariants $g$, $n$ and $\beta$ are chosen in such a way that the open substack $\M_{g,n}(X, \beta)$ is always non-empty. 

We sometimes abbreviate ``category fibred in groupoids" as CFG.

\section{Preliminaries} \label{section: preliminaries}

\subsection{Good Moduli Spaces of Quotient Stacks} Let $\mathcal{X}$ be an algebraic stack over $\C$. We review Alper's notion of a \emph{good moduli space} of $\mathcal{X}$, and how it relates to GIT over general base schemes.

\begin{defn}[\cite{alpergood}]
	A morphism $\phi : \mathcal{X} \to X$ is said to be a \emph{good moduli space} if $X$ is an algebraic space and if
	\begin{enumerate}
		\item $\phi_{\ast} : \mathbf{QCoh}(\mathcal{X}) \to \mathbf{QCoh}(X)$ is exact; and
		\item the natural morphism $\OO_X \to \phi_{\ast} \OO_{\mathcal{X}}$ is an isomorphism.
	\end{enumerate}
\end{defn}

A good moduli space $\phi : \mathcal{X} \to X$ enjoys the following properties:
\begin{enumerate}
	\item $\phi$ is surjective and universally closed, and $X$ has the quotient topology.
	\item Two points $x_1, x_2 \in \mathcal{X}(\C)$ map to the same point of $X$ if and only if their closures meet.
	\item If $X' \to X$ is a morphism of algebraic spaces then the base change $\mathcal{X} \times_X X' \to X'$ is a good moduli space.
	\item If $\mathcal{X}$ is locally Noetherian then $\phi$ is universal for morphisms to algebraic spaces; if moreover $\mathcal{X}$ is Deligne-Mumford then $\phi : \mathcal{X} \to X$ is a coarse moduli space.
	\item If $\mathcal{X}$ is of finite type over a scheme $S$ then so is $Y$.
\end{enumerate}

\begin{prop}[\cite{alpergood}] \label{prop: good moduli spaces and good quotients}
	If $\mathcal{X} = [Z/G]$, where $G$ is a reductive group acting on a quasi-projective scheme $Z$, then an algebraic space $X$ is the good moduli space of $[Z/G]$ if and only if $X$ is the good quotient of $Z$ by $G$.
\end{prop}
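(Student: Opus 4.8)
The plan is to reduce the statement to two standard facts: the exactness of the invariants functor for a linearly reductive group in characteristic zero (via the Reynolds operator), and Serre's cohomological criterion for affineness. Writing $\mathcal{X} = [Z/G]$, let $p\colon Z\to\mathcal{X}$ denote the canonical $G$-torsor; it is an affine morphism because $G$ is affine, and $p^{*}$ is an equivalence $\mathbf{QCoh}(\mathcal{X})\simeq\mathbf{QCoh}^{G}(Z)$ sending $\OO_{\mathcal{X}}$ to $\OO_{Z}$ with its tautological equivariant structure. Giving a morphism $\phi\colon\mathcal{X}\to X$ to an algebraic space is the same as giving the $G$-invariant morphism $\pi:=\phi\circ p\colon Z\to X$, and for $\mathcal{F}\in\mathbf{QCoh}(\mathcal{X})$ with associated equivariant sheaf $\widetilde{\mathcal{F}}$ I would first record the canonical isomorphism $\phi_{*}\mathcal{F}\cong(\pi_{*}\widetilde{\mathcal{F}})^{G}$ of $\OO_{X}$-modules, where $\pi_{*}\widetilde{\mathcal{F}}$ carries the $G$-action induced by the $G$-invariance of $\pi$ (with $G$ acting trivially on $X$). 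This is just an unwinding of definitions: over an affine open $U\subseteq X$ one has $\phi^{-1}(U)=[\pi^{-1}(U)/G]$, so the sections of $\mathcal{F}$ over $\phi^{-1}(U)$ are the $G$-invariant sections of $\widetilde{\mathcal{F}}$ over $\pi^{-1}(U)$. Since all conditions appearing in the statement can be checked \'etale-locally on $X$, I would be free to assume $X=\spec R$ affine whenever convenient.

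For the implication ``good quotient $\Rightarrow$ good moduli space'', suppose $\pi$ is a good quotient. Then $\pi$ is affine, so $\pi_{*}$ is exact on quasi-coherent sheaves; and since $G$ is reductive, hence linearly reductive in characteristic zero, the invariants functor $(-)^{G}$ on $G$-equivariant quasi-coherent $\OO_{X}$-modules is exact, the Reynolds operator providing a functorial splitting of $M^{G}\hookrightarrow M$. Hence $\phi_{*}=(-)^{G}\circ\pi_{*}\circ p^{*}$ is exact, which is the first defining property of a good moduli space, while the second, $\OO_{X}\xrightarrow{\sim}\phi_{*}\OO_{\mathcal{X}}$, is exactly the isomorphism $\OO_{X}\xrightarrow{\sim}(\pi_{*}\OO_{Z})^{G}$ contained in the definition of a good quotient, read through the formula above.

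For the converse, suppose $\phi\colon\mathcal{X}\to X$ is a good moduli space; I may assume $X=\spec R$. The $G$-invariance of $\pi$ is automatic, surjectivity of $\pi$ follows from that of $\phi$ (a general property of good moduli spaces) together with that of $p$, and $\OO_{X}\xrightarrow{\sim}(\pi_{*}\OO_{Z})^{G}$ is the second defining property of a good moduli space in disguise. The topological conditions on a good quotient — closed $G$-invariant subsets map to closed subsets, and disjoint ones map to disjoint ones — I would extract from the facts that $\phi$ is universally closed and that two points of $\mathcal{X}$ have the same image under $\phi$ if and only if their closures meet, applied to the closed substacks $[W/G]\subseteq\mathcal{X}$ attached to closed $G$-invariant subschemes $W\subseteq Z$. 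The one substantial point, and the step I expect to be the main obstacle, is that $\pi$ is affine, i.e.\ that $Z$ is affine when $X$ is. For this I would invoke Serre's cohomological criterion: $Z$ is quasi-compact and separated, being quasi-projective over $\C$, so it suffices to show $H^{i}(Z,\mathcal{F})=0$ for all $i>0$ and all $\mathcal{F}\in\mathbf{QCoh}(Z)$. Since $p$ is affine this equals $H^{i}(\mathcal{X},p_{*}\mathcal{F})$, and since $\phi$ is cohomologically affine this in turn equals $H^{i}(X,\phi_{*}p_{*}\mathcal{F})$, which vanishes for $i>0$ as $X$ is affine and $\phi_{*}p_{*}\mathcal{F}$ is quasi-coherent. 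The delicate part inside this is justifying that the cohomological affineness of a good moduli space really does collapse the relevant Leray spectral sequence for quasi-coherent cohomology; this is where I would appeal to the results of Alper \cite{alpergood}, from which the whole proposition ultimately derives. Apart from this affineness point the argument is entirely formal, so I would expect the write-up to be short.
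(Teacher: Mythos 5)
The paper does not prove this proposition; it is quoted directly from Alper's work on good moduli spaces, and your reconstruction is essentially Alper's own argument (the identification $\phi_{*}\mathcal{F}\cong(\pi_{*}\widetilde{\mathcal{F}})^{G}$, exactness of invariants via linear reductivity in one direction, and the generalised Serre criterion that a cohomologically affine representable morphism is affine in the other). Your proof is correct, and the one step you flag as delicate --- collapsing the Leray spectral sequence to deduce affineness of $\pi$ from cohomological affineness of $\phi$ --- is precisely the point where Alper's Proposition 3.3 is invoked, so the attribution is placed exactly where it belongs.
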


In order to state the link between the theory of good moduli spaces and GIT, the definition of GIT semistability given in \cite{mfk} needs to be generalised first.

\begin{defn}[\cite{alpergood}, Definition 11.1] \label{defn: relative semistable locus}
	Let $S$ be a quasi-compact scheme, let $G$ be a reductive group scheme over $S$, let $p : X \to S$ be a quasi-projective morphism and let $L$ be a $G$-linearisation on $X$. A point $x \in X$ is said to be \emph{(relatively) semistable} (with respect to $L$) if there exists an open neighbourhood $U \subset S$ of $p(x)$, a positive integer $n$ and an invariant section $t \in H^0(p^{-1}(U), L^n)^G$ for which $t(x) \neq 0$ and for which the locus $\{y \in p^{-1}(U) : t(y) \neq 0\}$ is affine. The locus of points which are semistable with respect to $L$ is denoted $X^{ss}(L/S)$.\footnote{In the case where the base $S = \spec \C$, we use the usual notation $X^{ss}(L) = X^{ss}(L/\spec \C)$.}
\end{defn}

\begin{prop}[\cite{alpergood}, Theorem 13.6] \label{prop: alper and git}
	Let $S$ be a quasi-compact scheme, let $G$ be a reductive group scheme over $S$, let $p : X \to S$ be a quasi-projective morphism and let $L$ be a $G$-linearisation on $X$, with corresponding relative semistable locus $X^{ss}(L/S)$. Then there exists a good moduli space $\phi : [X^{ss}(L/S)/G] \to Y$, with $Y$ an open subscheme of $\mathbf{Proj}_S \bigoplus_{k=0}^{\infty} (p_{\ast} L^k)^G$. Moreover, there exists an $S$-ample invertible sheaf\footnote{Here, $M$ can be taken to be the restriction of $\OO(N)$ for some $N > 0$, where $\OO(1)$ denotes the twisting sheaf.} $M$ on $Y$ such that $q^{\ast}(M) \cong L^N|_{X^{ss}(L/S)}$ for some $N > 0$. If $p$ is projective and if $L$ is relatively ample, then $Y$ is projective over $S$.
\end{prop}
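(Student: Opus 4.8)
The plan is to deduce the statement from Mumford's affine GIT construction, relativised over $S$, together with Proposition~\ref{prop: good moduli spaces and good quotients}. Since both the formation of $\mathbf{Proj}_S$ of a sheaf of graded $\OO_S$-algebras and the property of being a good moduli space are Zariski-local on $S$, I would first reduce to the case where $S = \spec A$ is affine; then $G$ is a reductive $A$-group scheme, $p : X \to S$ is of finite type, $(p_\ast L^k)^G$ is the $A$-module $H^0(X, L^k)^G$, and $R := \bigoplus_{k \geq 0} H^0(X, L^k)^G$ is a graded $A$-algebra, so that $\mathbf{Proj}_S R$ is the ordinary $\proj$ of $R$. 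The aim is then to construct $Y$ by gluing affine GIT quotients and to identify it with an open subscheme of $\mathbf{Proj}_S R$.

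For the affine pieces: by Definition~\ref{defn: relative semistable locus} (and a small manipulation to replace local invariant sections over the affine base by global ones), the relative semistable locus is the union $X^{ss}(L) = \bigcup_t X_t$ over those invariant sections $t \in H^0(X, L^k)^G$, $k \geq 1$, for which $X_t := \{t \neq 0\}$ is affine; write $X_t = \spec B_t$, with $B_t$ a finitely generated $A$-algebra carrying a rational $G$-action. For such $t$ I would set $X_t \sslash G := \spec B_t^G$. Because $G$ is reductive and we work in characteristic zero, Nagata's finiteness theorem gives that $B_t^G$ is a finitely generated $A$-algebra, so $X_t \sslash G$ is of finite type over $S$; and reductivity provides a Reynolds operator, making $(-)^G$ exact on quasi-coherent $B_t$-modules, so $[X_t/G] \to X_t \sslash G$ is a good moduli space --- this is the affine case of Proposition~\ref{prop: good moduli spaces and good quotients}.

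Next I would glue these. For invariant sections $t \in H^0(X,L^k)^G$, $t' \in H^0(X,L^{k'})^G$, one has $X_t \cap X_{t'} = (X_t)_{f}$, where $f := (t')^k / t^{k'}$ is the $G$-invariant function on $X_t$ obtained by trivialising $L^{kk'}|_{X_t}$ via $t^{k'}$; hence $X_t \cap X_{t'}$ is affine with GIT quotient the principal open $\spec (B_t^G)_f \subseteq X_t \sslash G$, invariants commuting with this localisation since $f$ is invariant. Each $X_t$, being the non-vanishing locus of an invariant section, is saturated with respect to the good-quotient maps, so the affine pieces $X_t \sslash G$ patch along these principal opens into a scheme $Y$ over $S$, and the resulting $\phi : [X^{ss}(L)/G] \to Y$ is a good moduli space (good moduli spaces may be built Zariski-locally on the target and glued along saturated opens). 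Each admissible $t$ also defines a basic open $D_+(t) = \spec R_{(t)} \subseteq \mathbf{Proj}_S R$, and the natural comparison map $R_{(t)} \to B_t^G$ (restrict a global invariant section and divide by the relevant power of $t$) identifies $X_t \sslash G$ with $D_+(t)$; thus $Y$ is the union of the $D_+(t)$ for admissible $t$, an open subscheme of $\mathbf{Proj}_S R$.

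For quasi-projectivity of $Y \to S$: since $[X^{ss}(L)/G] \to S$ is of finite type, hence quasi-compact, $X^{ss}(L)$ is covered by finitely many $X_{t_1}, \dots, X_{t_m}$; after replacing the $t_i$ by equal-degree powers, the rational map $Y \dashrightarrow \PP^{m-1}_S$ they define is a morphism on all of $Y = \bigcup_i D_+(t_i)$, and its pullback $M_Y$ of $\OO(1)$ has sections $\bar t_i$ whose non-vanishing loci $D_+(t_i)$ are affine and cover $Y$; by the usual ampleness criterion, $M_Y$ is then relatively ample on the finite-type $S$-scheme $Y$, so $Y \to S$ is quasi-projective. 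If in addition $p$ is projective and $L$ relatively ample, then $\bigoplus_k p_\ast L^k$ is a finitely generated sheaf of $\OO_S$-algebras and hence, by Nagata's theorem again, so is $R$, making $\mathbf{Proj}_S R$ projective over $S$; and since ampleness of $L$ forces the non-vanishing locus of every section of every power of $L$ to be affine, every $D_+(t)$ is now admissible, so $Y = \mathbf{Proj}_S R$ is projective over $S$. I expect the main difficulty to lie in the gluing step and the identification of $Y$ as an open subscheme of $\mathbf{Proj}_S R$: one must check with care that the $X_t$ are saturated (so that the local good moduli spaces glue), that taking $G$-invariants commutes with the localisations arising in $X_t \cap X_{t'}$ (the Veronese bookkeeping for $L$), and that the comparison maps $R_{(t)} \to B_t^G$ are isomorphisms --- each of which rests on reductivity of $G$ via the Reynolds operator together with Nagata's finiteness theorem.
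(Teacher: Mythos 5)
The paper does not prove this statement; it is quoted directly from Alper (\emph{Good moduli spaces for Artin stacks}, Theorem 13.6), whose proof is exactly the argument you give: reduce to affine $S$, cover $X^{ss}(L)$ by affine non-vanishing loci $X_t$ of invariant sections, take the affine good moduli spaces $\spec B_t^G$ (using linear reductivity/the Reynolds operator), glue along the invariant principal opens $X_t\cap X_{t'}=(X_t)_f$, and identify the result with $\bigcup_t D_+(t)\subseteq\mathbf{Proj}_S\bigoplus_k(p_*L^k)^G$, with the ample/projective case following from $Y=\mathbf{Proj}_S R$. Your proposal is correct, and the delicate points you flag (saturation of the $X_t$, commutation of invariants with the localisations, and the isomorphisms $R_{(t)}\cong B_t^G$) are precisely the ones Alper's and Mumford's treatments address.
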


The following result will be used several times in Section \ref{section: BS GRT constructions}.

\begin{lemma} \label{lem: technical git over base result}
	Let $S$ be a quasi-compact scheme, let $p : X \to S$ be an affine morphism of finite type, let $G$ be a reductive linear algebraic group (over $\C$), and let $\chi$ be a character of $G$. Suppose $G$ acts on $X$ (and acts trivially on $S$) in such a way that the morphism $p$ is invariant. Let $L = \OO_X(\chi)$ be the linearisation given by twisting the structure sheaf $\OO_X$ by the character $\chi$, and let $X^{ss}(L/S)$ denote the resulting relatively semistable locus. Assume that for each geometric point $s \in S$, the (non-relative) semistable locus $(X_s)^{ss}(L_s)$ for the induced action $G \circlearrowright X_s$ is non-empty. 
	
	Then the good moduli space of $[X^{ss}(L/S)/G]$ is given by
	$$ Y = \mathbf{Proj}_S \bigoplus_{k=0}^{\infty} (p_{\ast} L^k)^G = \mathbf{Proj}_S \bigoplus_{k=0}^{\infty} (p_{\ast} \OO_X(\chi^k))^G, $$
	and for each geometric point $s \in S$ there is an equality of schemes
	\begin{equation} \label{eqn: fibrewise semistable loci}
		(X^{ss}(L/S))_s = (X_s)^{ss}(L_s).
	\end{equation}
\end{lemma}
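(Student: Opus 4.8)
The plan is to deduce both assertions from Proposition~\ref{prop: alper and git} together with the linear reductivity of $G$ (which holds since $G$ is reductive in characteristic zero), the essential simplifying feature being that $p$ is affine.

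\emph{The good moduli space.} Proposition~\ref{prop: alper and git} applies verbatim to the $S$-group scheme $G \times S$ acting on $X/S$ with the linearisation $L$, producing a good moduli space $\phi : [X^{ss}(L)/G] \to Y$ with $Y$ an open subscheme of $\mathbf{Proj}_S \bigoplus_k (p_{\ast} L^k)^G$; the remaining point is that $Y$ is \emph{all} of this $\mathbf{Proj}$. For this it is enough to show that $X^{ss}(L)$ is saturated, i.e.\ equal to the full preimage of $\mathbf{Proj}_S \bigoplus_k (p_{\ast} L^k)^G$ under the canonical rational map out of $X$. Since $p$ is affine, $p^{-1}(U)$ is affine for every affine open $U \subseteq S$, and since $L = \OO_X(\chi)$ is non-equivariantly trivial, a section $t \in H^0(p^{-1}(U), L^k)^G$ is just an element of $\Gamma(U, (p_{\ast} L^k)^G)$ whose non-vanishing locus in $p^{-1}(U)$ is a basic open of an affine scheme, hence affine. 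Thus the affineness clause of Definition~\ref{defn: relative semistable locus} is automatic, so $X^{ss}(L)$ is exactly the union of the opens $\{t \neq 0\}$ over all affine $U \subseteq S$, all $k \geq 1$ and all $t \in \Gamma(U, (p_{\ast} L^k)^G)$, and these are precisely the preimages of the standard affine charts $D_+(t)$ that cover $\mathbf{Proj}_S \bigoplus_k (p_{\ast} L^k)^G$. Hence $X^{ss}(L)$ surjects onto the whole $\mathbf{Proj}$, which forces $Y = \mathbf{Proj}_S \bigoplus_k (p_{\ast} L^k)^G = \mathbf{Proj}_S \bigoplus_k (p_{\ast}\OO_X(\chi^k))^G$.

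\emph{The fibrewise identity \eqref{eqn: fibrewise semistable loci}.} As relative semistability is local on $S$, I may assume $S = \spec R$ and $X = \spec A$ with $A$ an $R$-algebra carrying a $G$-action (and $R$ the trivial one); write $A_{\chi^k} = H^0(X, L^k)^G \subseteq A$ for the module of $\chi^k$-semi-invariants. By the previous paragraph $X^{ss}(L) = \{ x \in X : t(x) \neq 0 \text{ for some } k \geq 1,\ t \in A_{\chi^k} \}$, while for a geometric point $s$ the classical Mumford locus is $(X_s)^{ss}(L_s) = \{ x \in X_s : \bar{f}(x) \neq 0 \text{ for some } k \geq 1,\ \bar{f} \in (A_s)_{\chi^k} \}$, where $A_s = A \otimes_R \C$. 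The claim therefore reduces to showing that the restriction map $A_{\chi^k} \to (A_s)_{\chi^k}$ is surjective: granting this, the inclusion $\subseteq$ follows by restricting sections to the fibre, and $\supseteq$ by lifting them. Surjectivity follows from linear reductivity of $G$: the invariants functor $M \mapsto M^G$ on $G$-modules is exact, and hence (splitting a module into its $G$-isotypic components, each of which is an $R$-submodule since $R$-multiplication is $G$-equivariant) commutes with arbitrary base change of $R$-$G$-modules; applying this to $A$ with its $G$-action twisted by $\chi^k$ and to $R \to \C$ yields an isomorphism $A_{\chi^k} \otimes_R \C \xrightarrow{\ \sim\ } (A_s)_{\chi^k}$, and the restriction map is its composite with the surjection $A_{\chi^k} \twoheadrightarrow A_{\chi^k} \otimes_R \C$. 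Finally $(X^{ss}(L))_s$ and $(X_s)^{ss}(L_s)$ are open subschemes of $X_s$ with the same underlying set, hence coincide as schemes.

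The main obstacle is this last surjectivity: reconciling the formation of $G$-invariants with restriction to a fibre. This is exactly where characteristic zero enters — for a linearly reductive group, invariants commute with base change — and once that is in hand the remainder is bookkeeping with Definition~\ref{defn: relative semistable locus} and the Proj construction.
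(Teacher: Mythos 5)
Your argument is correct, but it takes a genuinely different route from the paper. The paper's proof works entirely inside the Edidin--Rydh formalism: it identifies $[X/G]$ with $\mathbf{Proj}_{B_S G}\bigoplus_k \overline{p}_{\ast}\mathcal{L}^k$, recognises $[X^{ss}(L)/G]$ as the \emph{saturated} $\mathbf{Proj}$ over $B_S G$, and then reads off both the good moduli space (via their Proposition 3.4, pushing forward along the good moduli space $B_S G \to S$) and the fibrewise equality \eqref{eqn: fibrewise semistable loci} (via their base-change result for saturated $\mathbf{Proj}$). You instead start from Proposition \ref{prop: alper and git} and upgrade ``open subscheme of $\mathbf{Proj}_S\bigoplus_k (p_{\ast}L^k)^G$'' to equality by the elementary observation that affineness of $\{t\neq 0\}$ is automatic when $p$ is affine and $L$ is non-equivariantly trivial, so that $X^{ss}(L)$ is the full union of the loci $\{t\neq 0\}$ covering all the charts $D_+(t)$; and you prove \eqref{eqn: fibrewise semistable loci} by the classical mechanism that formation of ($\chi^k$-semi-)invariants commutes with base change for a linearly reductive group. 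Both steps are sound; the one place worth making explicit is that $\{t\neq 0\}\to D_+(t)$ is surjective because it is the affine GIT quotient $\spec A_t \to \spec (A_t)^G$ by a reductive group (equivalently, that $Y$ in Alper's theorem is precisely the union of these charts). What your approach buys is self-containedness and transparency -- everything reduces to King-style semi-invariants on affine charts -- whereas the paper's approach buys uniformity: the saturated $\mathbf{Proj}$ machinery delivers the base-change statement \eqref{eqn: fibrewise semistable loci} for free rather than via a separate isotypic-decomposition argument, and is the same toolkit reused elsewhere in Section \ref{section: BS GRT constructions}.
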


\begin{proof}
	There is a map $X \dashrightarrow \mathbf{Proj}_S \bigoplus_{k=0}^{\infty} (p_{\ast} L^k)^G$ arising from the inclusions of $G$-invariants $(p_{\ast} L^k)^G \subset p_{\ast} L^k$; the domain of definition of this map is the open subscheme $X^{ss}(L/S)$, and the image coincides with the good moduli space $Y$.
	
	Take a geometric point $s \in S$. We have an inclusion $(X^{ss}(L/S))_s \subset (X_s)^{ss}(L_s)$. To show that the reverse inclusion holds, without loss of generality we may assume that $S = \spec A$ is affine, so that $X = \spec B$ is also affine. Let $\mm \subset A$ be the maximal ideal which corresponds to $s \in S$, so $X_s = \spec B/\mm B$. If $x \in (X_s)^{ss}(L_s)$ is a point, there exists a positive integer $n$ and a $\chi^n$-semi-invariant $\overline{f} \in B / \mm B$ which does not vanish at $x$. The ring of $\chi$-semi-invariants of $B / \mm B$ can be identified with a ring of $G$-invariants of the graded ring $(B/\mm B)[t]$ (cf. \cite[Page 193]{mukaimoduli}). As $G$ is reductive then taking $G$-invariants of graded rings is exact; consequently there exists a $\chi^n$-semi-invariant $f \in B$ whose image in $B / \mm B$ is $\overline{f}$. In particular, $f$ does not vanish at $x$, and so $x \in (X^{ss}(L/S))_s$; this gives the equality \eqref{eqn: fibrewise semistable loci}.
	
	It remains to show $Y = \mathbf{Proj}_S \bigoplus_{k=0}^{\infty} (p_{\ast} L^k)^G$. However, the restriction of the morphism $X^{ss}(L/S) \to \mathbf{Proj}_S \bigoplus_{k=0}^{\infty} (p_{\ast} L^k)^G$ over a geometric point $s \in S$ coincides with the surjective good quotient
	
	$$ (X_s)^{ss}(L_s) \to \left( \mathbf{Proj}_S \bigoplus_{k=0}^{\infty} (p_{\ast} L^k)^G \right) \times_S \spec \C = \proj \bigoplus_{k=0}^{\infty} H^0(X_s, \OO_{X_s}(\chi^k))^G, $$
	and so $X^{ss}(L/S) \to \mathbf{Proj}_S \bigoplus_{k=0}^{\infty} (p_{\ast} L^k)^G$ must be surjective.
\end{proof}

\subsection{Thaddeus Flips}

Here we recall the notion of a Thaddeus flip (in the relative setting), following \cite{grebrosstoma} and \cite{grebrosstomamaster}.

\begin{defn} \label{defn: thaddeus flip}
	Let $Y_+$, $Y_-$ and $Y_0$ be schemes. We say that $Y_+$ and $Y_-$ are \emph{related by a Thaddeus flip through $Y_0$} if there exists a Noetherian scheme $X$ over a quasi-compact scheme $S$, acted on by a reductive linear algebraic group $G$ over $S$, with $G$-linearisations $L_+, L_-, L_0$ on $X$ such that:
	\begin{enumerate}
		\item there are equalities $Y_{\pm} = X^{ss}(L_{\pm}/S) \sslash G$ and $Y_0 = X^{ss}(L_0/S) \sslash G$; and
		\item there exists a diagram
		\[\begin{tikzcd}[ampersand replacement=\&,cramped]
			{Y_+} \&\&\&\& {Y_-} \\
			\&\& {Y_0}
			\arrow["{\psi_+}"', from=1-1, to=2-3]
			\arrow["{\psi_-}", from=1-5, to=2-3]
			\arrow[dotted, from=1-1, to=1-5]
			\arrow[dotted, from=1-5, to=1-1]
		\end{tikzcd}\]
		where $\psi_{\pm}$ is the morphism arising from an inclusion of relative semistable loci $X^{ss}(L_{\pm}/S) \subset X^{ss}(L_0/S)$.
	\end{enumerate}
\end{defn}

The geometry of a Thaddeus flip is governed by the results of \cite{thaddeus} and \cite{dolgachevhu}\footnote{Thaddeus works in the absolute setting $S = \spec \C$, however the above stated results concern properties which are affine-local over $Y_0 = X^{ss}(L_0/S) \sslash G$, and so carry over to the relative setting.}. Note that unlike in other occurrences of notions of flips we do not require the map $Y_+ \dashrightarrow Y_-$ to be birational.

\subsection{A Miscellaneous Relative GIT Result}

The following result will be used to help establish Proposition \ref{prop: further quotients}.

Let $H$ be a reductive group, let $X$, $Y$ be quasi-projective schemes, and let $q : X \to Y$ be a projective morphism. Assume $X$ and $Y$ admit $H$-actions with respect to which the morphism $q$ is equivariant. Let $\mathcal{N}$ be a $H$-linearised relatively ample line bundle on $X$. Assume there exists an equivariant open immersion $\iota : Y \hookrightarrow \overline{Y}$ into a projective scheme $\overline{Y}$ acted on by $H$ with ample linearisation $L$. Assume further that
$$ \overline{Y}^{ss}(L) = \overline{Y}^s(L) = Y. $$ 

\begin{prop} \label{prop: relative GIT result}
	Let $q : X \to Y$ be as above. Then there are projective geometric quotients $X \sslash H$ and $Y \sslash H$ of $X$ and $Y$ respectively, and the morphism $q$ induces a projective morphism $\hat{q} : X \sslash G \to Y \sslash H$. If $y \in Y$ is a closed point, the fibre of $\hat{q}$ over the orbit $G \cdot y$ is isomorphic to $X_y / H_y$, the geometric quotient of $X_y$ by the stabiliser $H_y$ of the point $y \in Y$.
\end{prop}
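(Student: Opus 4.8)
The plan is to deduce the whole statement from the displayed chain of equalities of semistable loci above, together with \cite{hurelative} Theorem 3.13 applied to the projective $H$-equivariant morphism $\overline{q} : \overline{X} \to \overline{Y}$; with the GIT picture already set up, the argument is essentially an unwinding of definitions. Write $\OO(1,d) = \OO_{\PP(V_1)\times\PP(V_2)}(1,d)$ and $\OO(1,0) = \OO_{\PP(V_1)\times\PP(V_2)}(1,0)$. First I would produce the two geometric quotients. Since $\overline{Y}$ is projective, $L$ is ample, and $\overline{Y}^{ss}(L) = \overline{Y}^{s}(L) = Y$ by hypothesis, \cite{mfk} yields a projective geometric quotient of $\overline{Y}$ with respect to $L$, which (being set-theoretically the orbit space $Y/H$) we take as $Y \sslash H$. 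Fixing $d \gg 0$ so that the displayed chain holds, $\overline{X}$ is projective, $\OO(1,d)|_{\overline{X}}$ is ample (being the restriction of a product of ample line bundles), and $\overline{X}^{ss}(\OO(1,d)|_{\overline{X}}) = \overline{X}^{s}(\OO(1,d)|_{\overline{X}}) = X$; so \cite{mfk} yields a projective geometric quotient of $\overline{X}$ with respect to $\OO(1,d)|_{\overline{X}}$, which we take as $X \sslash H$. Geometric quotients being unique when they exist, these are independent of the auxiliary data, and the linearisation $\mathcal{N}$ --- which enters only through $\mathcal{N}^{a} \cong \kappa^{\ast}\OO(1,0)$ --- does not affect $X \sslash H$, since all of $X$ is semistable.

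Next, since $q : X \to Y$ is $H$-equivariant, the composite $X \xrightarrow{q} Y \to Y \sslash H$ is $H$-invariant and hence factors uniquely through the categorical quotient $X \to X \sslash H$, producing a morphism $\hat{q} : X \sslash H \to Y \sslash H$; this is the morphism furnished by \cite{hurelative} Theorem 3.13 for $\overline{q}$, whose hypotheses hold because $\overline{q}$ is projective and equivariant, $\OO(1,0)|_{\overline{X}}$ is $\overline{q}$-relatively ample, and $\mathrm{pr}_2^{\ast}\OO_{\PP(V_2)}(1)|_{\overline{X}}$ is pulled back from $\overline{Y}$ along $\eta$. Projectivity of $\hat{q}$ is then automatic, as $X \sslash H$ and $Y \sslash H$ are both projective over $\C$ and any morphism between schemes projective over a common base is projective.

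It remains to compute the fibres. Fix a closed point $y \in Y$; since $y$ is $L$-stable in $\overline{Y}$, its orbit $H \cdot y$ is closed in $Y$ and the stabiliser $H_y$ is finite. Equivariance of $q$ gives $q^{-1}(H \cdot y) = H \cdot X_y$ with $X_y = q^{-1}(y)$, and since $q^{-1}(H\cdot y) \to H \cdot y \cong H/H_y$ is $H$-equivariant, $q^{-1}(H\cdot y)$ is the associated bundle $H \times^{H_y} X_y$; passing to the geometric quotient by $H$ then identifies $\hat{q}^{-1}(H \cdot y)$ with $X_y / H_y$. The scheme-theoretic refinement of this is the fibre statement in \cite{hurelative} Theorem 3.13 --- alternatively it follows from slice-theoretic arguments, the orbit $H\cdot y$ being closed with finite stabiliser --- and one uses here that $X_y$ equals $\overline{q}^{-1}(y)$, because $X = \overline{q}^{-1}(Y)$ by the displayed chain.

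The step I expect to require the most care is the appeal to \cite{hurelative} Theorem 3.13 in exactly the form needed: as noted in the discussion above, Hu's results are only valid under extra hypotheses, met here because both the source $\overline{X}$ and the target $\overline{Y}$ of $\overline{q}$ are projective. Once that is granted, the remaining points --- the existence of the two quotients, the factorisation defining $\hat{q}$, its projectivity, and the fibre identification --- are routine.
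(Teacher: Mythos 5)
Your proposal is correct and follows essentially the same route as the paper, whose entire proof is the preceding discussion: the chain of equalities of semistable loci obtained from \cite{hurelative} Theorem 3.11 for $d \gg 0$, followed by an application of \cite{hurelative} Theorem 3.13 to $\overline{q} : \overline{X} \to \overline{Y}$. Your additional unwinding (the MFK quotients of $\overline{X}$ and $\overline{Y}$, the factorisation defining $\hat{q}$, and the identification $q^{-1}(H\cdot y) \cong H \times^{H_y} X_y$ for the fibre) is a correct elaboration of what that citation packages.
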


\begin{proof}
	In the situation of Proposition \ref{prop: relative GIT result}, there are representations $H \to GL(V_i)$, $i = 1, 2$, a commutative diagram
	\[\begin{tikzcd}
		X &&&& {\PP(V_1) \times \PP(V_2)} \\
		\\
		Y && {\overline{Y}} && {\PP(V_2)}
		\arrow["\kappa", hook, from=1-1, to=1-5]
		\arrow["q"', from=1-1, to=3-1]
		\arrow["\iota", hook, from=3-1, to=3-3]
		\arrow["\eta", hook, from=3-3, to=3-5]
		\arrow["{\mathrm{pr}_2}", from=1-5, to=3-5]
	\end{tikzcd}\]
	whose rows are given by $H$-equivariant locally closed immersions, and positive integers $a$ and $b$ with
	$$ N^{a} \cong \kappa^{\ast}(\OO_{\PP(V_1) \times \PP(V_2)}(1,0)) \text{   and   } L^{b} \cong \eta^{\ast}(\OO_{\PP(V_2)}(1)). $$
	Let $\overline{X}$ denote the closure of the image of $X$ in $\PP(V_1) \times \PP(V_2)$. The morphism $\mathrm{pr}_2 : \overline{X} \to \PP(V_2)$ factors through $\overline{Y}$, giving a projective morphism $\overline{q} : \overline{X} \to \overline{Y}$ making the following diagram commute:
	\[\begin{tikzcd}
		X && {\overline{X}} && {\PP(V_1) \times \PP(V_2)} \\
		\\
		Y && {\overline{Y}} && {\PP(V_2)}
		\arrow["q"', from=1-1, to=3-1]
		\arrow["\iota", hook, from=3-1, to=3-3]
		\arrow["\eta", hook, from=3-3, to=3-5]
		\arrow["{\mathrm{pr}_2}", from=1-5, to=3-5]
		\arrow["\kappa", hook, from=1-1, to=1-3]
		\arrow["\subset", hook, from=1-3, to=1-5]
		\arrow["{\overline{q}}"', from=1-3, to=3-3]
	\end{tikzcd}\]
	
	By \cite[Theorem 3.11]{hurelative}\footnote{As indicated by Schmitt in \cite{schmittrgitremark}, Hu's result is erroneously stated and only applies in certain circumstances; the case where both the domain and target are projective is one such circumstance.} for all $d \gg 0$ there are equalities of semistable loci (over $\spec \C$)
	\begin{align*}
		\overline{X}^{ss}(\OO_{\PP(V_1) \times \PP(V_2)}(1,d) |_{\overline{X}}) = \overline{X}^{s}(\OO_{\PP(V_1) \times \PP(V_2)}(1,d) |_{\overline{X}}) &= \overline{q}^{-1}(\overline{Y}^{ss}( \eta^{\ast} \OO_{\PP(V_2)}(1) )) \\ &= \overline{q}^{-1}(\overline{Y}^{ss}(L^{b}))  \\ &= \overline{q}^{-1}(Y) = X.
	\end{align*}
	
	Applying Theorem 3.13 of \emph{loc. cit.} to $\bar{q}$ yields the result.
\end{proof}

\subsection{Multi-Gieseker Stability and Multi-Regular Sheaves} We review the notion of \emph{multi-Gieseker stability} for coherent sheaves on projective schemes, as introduced in \cite{grebrosstoma}.

Let $X$ be a projective scheme. Fix a \emph{stability parameter} $\sigma = (\underline{L}, \sigma_1, \dots, \sigma_k)$ on $X$; here $\underline{L}$ is a tuple of very ample line bundles $L_1, \dots, L_k$, and the $\sigma_i$ are non-negative rational numbers, not all zero. Given a coherent sheaf $E$ on $X$ of dimension $d$, the \emph{multi-Hilbert polynomial} of $E$ with respect to $\sigma$ is the polynomial
$$ P_E^{\sigma}(t) = \sum_{j=1}^k \sigma_j \chi(X, E \otimes L_j^t) = \sum_{i=0}^d \alpha_i^{\sigma}(E) \frac{t^i}{i!}, $$
and the \emph{reduced multi-Hilbert polynomial} of $E$ is
$$ p_E^{\sigma}(t) = \frac{P_E^{\sigma}(t)}{\alpha_d^{\sigma}(E)}. $$

\begin{defn} \label{defn: multigieseker stability}
	A coherent sheaf $E$ on $X$ is said to be \emph{multi-Gieseker (semi)stable}\footnote{Where no confusion is likely to arise, we also refer to this notion as \emph{$\sigma$-(semi)stability.}} with respect to $\sigma$ if $E$ is pure, and if for all non-zero proper subsheaves $F \subset E$,
	$$ p_F^{\sigma} < (\leq) \ p_E^{\sigma}. $$
\end{defn}

\begin{remark}
	Definition \ref{defn: multigieseker stability} and similar statements should be understood as two separate statements, with one statement defining multi-Gieseker semistability (using $\leq$) and the other statement defining multi-Gieseker stability (using $<$).
\end{remark}

In the case where only one $\sigma_i$ is non-zero, we recover the usual notion of Gieseker (semi)stability with respect to the polarisation $L_i$. The notions of Harder-Narasimhan filtrations, Jordan-H\"older filtrations and S-equivalence carry over to the multi-Gieseker setting, and multi-Gieseker (semi)stability is an open property in flat families (cf. \cite[Section 2]{grebrosstoma}). If $E$ is $\sigma$-semistable, the associated graded sheaf is denoted $\mathrm{gr}_{\sigma}(E)$.

\begin{defn}
	The coherent sheaf $E$ is said to be \emph{$(m, \underline{L})$-regular} if $E$ is $m$-regular with respect to each $L_j$, that is for each $i > 0$ we have $H^i(E \otimes L_j^{m-i}) = 0$. More generally, if $\pi : X \to S$ is a projective morphism of schemes, if $\underline{\LL} = (\LL_1, \dots, \LL_k)$ is a tuple of $\pi$-very ample sheaves and if $\E$ is an $S$-flat coherent sheaf on $X$, we say that $\E$ is \emph{$(m, \underline{\LL})$-regular} if for all $j = 1, \dots, k$ and for all $i > 0$, one has $R^i \pi_{\ast}(\E \otimes \LL_j^{m-i}) = 0$.
\end{defn}

\begin{defn}
	Let $\tau$ be a topological type of sheaves on $X$ defined with respect to very ample line bundles $L_1, \dots, L_k$. The stability parameter $\sigma = (\underline{L}; \sigma_1, \dots, \sigma_k)$ is said to be
	\begin{itemize}
		\item \emph{positive} if each $\sigma_i > 0$;
		\item \emph{degenerate} if there exists some $\sigma_i = 0$; and
		\item \emph{bounded (with respect to $\tau$)} if the collection of all $\sigma$-semistable sheaves on $X$ of topological type $\tau$ forms a bounded family.
	\end{itemize}
\end{defn}

\begin{remark}
	If $\E$ is an $S$-flat, $(m, \LL)$-regular sheaf over a projective $S$-scheme $\pi : X \to S$ and if $S$ is locally Noetherian, then for all $m' \geq m$ and for all $j = 1, \dots, k$ the sheaves $\pi_{\ast}(\E \otimes \LL_j^{m'})$ are locally free; this follows from the cohomology and base change theorem of \cite{ega3_2} as well as basic properties of Castelnuovo-Mumford regularity.
\end{remark}

\subsection{Sheaves on Nodal Curves} \label{section: sheaves on nodal curves} Let $C$ be a nodal curve with components $C_1, \dots, C_{\rho}$. Fix very ample line bundles $L_1, \dots, L_k$ on $C$ and a stability condition $\sigma = (\underline{L}, \sigma_1, \dots, \sigma_k)$. If $E$ is a torsion-free sheaf whose rank at the generic point of $X_i$ is $r_i(E)$, by \cite[Septième Partie, Corollaire 8]{seshadri}, we have
\begin{equation} \label{eqn: MGS nodal curve}
	P_E^{\sigma}(t) = \sum_{j=1}^k \sigma_j \chi(X, E \otimes L_j^t) = \chi(E) \sum_{i=1}^k \sigma_i + t \sum_{i=1}^k \sigma_i \left( \sum_{j=1}^{\rho} r_j(E) \deg_{C_j} L_i \right).
\end{equation}
Consequently $E$ is $\sigma$-(semi)stable if and only if for all non-zero proper subsheaves $F \subset E$ one has $\mu^{\sigma}(F) < (\leq) \ \mu^{\sigma}(E)$, where
\begin{equation} \label{eqn: multi slope stability nodal curve}
	\mu^{\sigma}(E) = \frac{\chi(E)}{\sum_{i,j} \sigma_i r_j(E) \deg_{C_j} L_i}.
\end{equation}
An immediate consequence of Inequality \ref{eqn: multi slope stability nodal curve} is that the stability condition is unchanged if each $L_i$ is replaced by $L_i^{p}$, where $p$ is a positive integer. This allows us to extend the notion of $\sigma$-stability to the case where the $L_i$ are ample $\Q$-line bundles. Moreover, in the nodal curve case, $\sigma$-(semi)stability coincides with Gieseker (semi)stability with respect to the $\Q$-ample line bundle $\bigotimes_i L_i^{\sigma_i}$.

\begin{remark}
	Equation \eqref{eqn: MGS nodal curve} implies that fixing the genus $g$ of $C$ and the rank $r$ and the degree $d$ of $E$ fixes the topological type $\tau$ of $E$ with respect to any finite collection of ample line bundles on $C$. Conversely, if the genus of $C$ is fixed beforehand and if it is known that $E$ is a torsion-free sheaf of uniform rank, then the degree and rank of $E$ can be recovered from the topological type.
\end{remark}

\begin{remark}
	In the case $r = 1$ the condition of $\sigma$-stability may be rephrased in terms of the subcurves $D$ of $C$; a rank $1$ torsion-free sheaf $E$ on $C$ is $\sigma$-(semi)stable if and only if for all connected proper subcurves $D \subset C$,
	$$ \deg_D E > (\geq) \ \frac{\sum_i \sigma_i \deg_D L_i}{\sum_i \sigma_i \deg L_i} \left( \deg E - \frac{\deg \omega_C}{2} \right) + \frac{\deg_D \omega_C}{2} - \frac{k_D}{2}. $$
	This follows from essentially the same argument given in \cite[Section 1.4]{alexeev}.
\end{remark}

\subsection{Stable Maps} We briefly recall the definition of a stable map to a fixed projective variety $X$. Let $\beta \in H_2(X,\Z)_+$ be a fixed curve class, let $g, n$ be non-negative integers and let $\overline{\M}_{g,n}(X, \beta)$ be the Deligne-Mumford stack parametrising flat and proper families of $n$-marked, genus $g$ stable maps to $X$ of class $\beta$. The $\C$-points of $\overline{\M}_{g,n}(X, \beta)$ are given by objects $(C; x_1, \dots, x_n; f)$, where $(C; x_1, \dots, x_n)$ is an $n$-marked prestable curve of genus $g$ and where $f : C \to X$ is a morphism to $X$ with $f_{\ast}[C] = \beta$, satisfying any of the following (equivalent) conditions:
\begin{itemize}
	\item $(C; x_1, \dots, x_n; f)$ has no infinitesimal automorphisms.
	\item If $X \subset \PP^b$ is a fixed embedding of $X$ inside a projective space, then the line bundle $\omega_C(x_1 + \cdots + x_n) \otimes f^{\ast} (\OO_{\PP^b}(3)|_X)$ is ample.
	\item Each genus 1 component of $C$ mapped by $f$ to a point has at least one special point, and each genus 0 component of $C$ mapped by $f$ to a point has at least three special points; here a \emph{special point} is a marked point or a node.
\end{itemize}
There is a distinguished open substack $\M_{g,n}(X, \beta)$ whose objects are given by stable maps whose source curve $C$ is non-singular. If $X$ is a point then a stable map to $X$ is the same as a marked stable curve, that is $\overline{\M}_{g,n}(\mathrm{pt}, 0) = \overline{\M}_{g,n}$.

Let $\MM_{g,n}$ be the stack of $n$-marked genus $g$ prestable curves. As explained in \cite{behrendgw}, if $X$ is smooth then there exists a natural perfect relative obstruction theory
\begin{equation} \label{eqn: OT stable maps}
	\mathbb{E}_{\overline{\M}_{g,n}(X, \beta)/\MM_{g,n}}^{\bullet} := R^{\bullet}(\pi_{\UU})_{\ast}(f_{\UU}^{\ast} T_X) \to \mathbb{L}^{\bullet}_{\overline{\M}_{g,n}(X, \beta)/\MM_{g,n}},
\end{equation}
where $\mathbb{L}^{\bullet}$ denotes the relative cotangent complex. This gives rise to a virtual fundamental class $[\overline{\M}_{g,n}(X, \beta)]^{\mathrm{vir}}$ of virtual dimension
$$ \mathrm{vdim}(\overline{\M}_{g,n}(X, \beta)) = \int_{\beta} c_1(T_X) + (\dim X - 3)(1 - g) + n. $$

\begin{remark}
	If we fix a closed embedding $i : X \hookrightarrow \PP^b$ such that $i_{\ast} \beta = d'[\ell]$, where $[\ell] \in H_2(\PP^b, \Z)_+$ is the class of a line, then a necessary condition for non-singular stable maps to exist is $2g - 2 + n + 3d' > 0$.
\end{remark}

\section{Stacks of Semistable Sheaves over Stable Maps} \label{section: introducing the stacks}

\subsection{The Stack of Torsion-Free Sheaves over $\overline{\M}_{g,n}(X, \beta)$} Let $X$ be a projective variety and let $\beta \in H_2(X, \Z)_+$ be a curve class. Fix integers $g, n, d, r$, with $g, n \geq 0$, $r \geq 1$ and for which the open substack $\M_{g,n}(X, \beta)$ is non-empty. Let\footnote{The procedure of rigidification, as described in \cite{abramovichcortivistoli}, is also valid for stacks which aren't necessarily algebraic.} $\MM_{g,n,d,r}^{\mathrm{rig}} := \MM_{g,n,d,r} \fatslash \ \G_m$ denote the $\G_m$-rigidification (with respect to the central $\G_m$ in the automorphism groups of the sheaves in $\MM_{g,n,d,r}$) of the stack $\MM_{g,n,d,r}$ of $n$-marked, genus $g$ prestable curves with rank $r$, degree $d$, torsion-free coherent sheaves, and let $\MM_{g,n,d,r}^{\mathrm{simp,rig}}$ be the substack of $\MM_{g,n,d,r}^{\mathrm{rig}}$ parametrising simple sheaves. Set
 $$ \overline{\J}_{g,n,d,r}(X, \beta) := \overline{\M}_{g,n}(X, \beta) \times_{\MM_{g,n}} \MM_{g,n,d,r}^{\mathrm{rig}}, $$
 and
 $$ \overline{\J}^{\mathrm{simp}}_{g,n,d,r}(X, \beta) := \overline{\M}_{g,n}(X, \beta) \times_{\MM_{g,n}} \MM_{g,n,d,r}^{\mathrm{simp,rig}}. $$
 The stack $\overline{\J}_{g,n,d,r}(X, \beta)$ is the stack associated to the CFG $\overline{\J}_{g,n,d,r}(X, \beta)^{\mathrm{pre}}$ whose fibre over a scheme $S$ consists of all objects 
$$(\pi : \mathcal{C} \to S; \sigma_1, \dots, \sigma_n; f : \mathcal{C} \to X; \F),$$ 
where $(\pi : \mathcal{C} \to S; \sigma_1, \dots, \sigma_n; f : \mathcal{C} \to X) \in \overline{\M}_{g,n}(X, \beta)(S)$ is a family of stable maps parametrised by $S$ and where $\F$ is a family of torsion-free sheaves over $\pi : \mathcal{C} \to S$ of relative degree $d$ and rank $r$, and whose morphisms between objects $(\pi : \mathcal{C} \to S; \sigma_1, \dots, \sigma_n; f : \mathcal{C} \to X; \F)$ and $(\pi' : \mathcal{C} \to S; \sigma_1', \dots, \sigma_n'; f' : \mathcal{C} \to X; \F')$ are given by Cartesian diagrams

\begin{equation} \label{eqn: stack morphisms}
	\begin{tikzcd}
		{\mathcal{C}} && {\mathcal{C}'} \\
		\\
		S && {S'}
		\arrow["g"', from=3-1, to=3-3]
		\arrow["{\overline{g}}", from=1-1, to=1-3]
		\arrow["{\pi}"', from=1-1, to=3-1]
		\arrow["{\pi'}", from=1-3, to=3-3]
		\arrow["{\sigma_i}"', curve={height=24pt}, from=1-1, to=3-1]
		\arrow["{\sigma_i'}", curve={height=-24pt}, from=1-3, to=3-3]
		\arrow["\lrcorner"{anchor=center, pos=0.125}, draw=none, from=1-1, to=3-3]
	\end{tikzcd}
\end{equation}
which are compatible with the sections $\sigma_i$ and $\sigma_i'$, together with an equivalence class of isomorphisms $\F \stackrel{\simeq}{\to} \overline{g}^{\ast} \F' \otimes \pi^{\ast} M$ (for some $M \in \Pic S$); isomorphisms $\psi_1 : \F \stackrel{\simeq}{\to} \overline{g}^{\ast} \F' \otimes \pi^{\ast} M_1$ and $\psi_2 : \F \stackrel{\simeq}{\to} \overline{g}^{\ast} \F' \otimes \pi^{\ast} M_2$ are equivalent if there exists an isomorphism $\eta : M_1 \stackrel{\simeq}{\to} M_2$ such that the diagram
\[\begin{tikzcd}
	&& {\overline{g}^{\ast} \F' \otimes \pi^{\ast} M_1} \\
	\F \\
	&& {\overline{g}^{\ast} \F' \otimes \pi^{\ast} M_2}
	\arrow["{\psi_1}", from=2-1, to=1-3]
	\arrow["{\psi_2}"', from=2-1, to=3-3]
	\arrow["{\mathrm{id} \otimes f^{\ast} \eta}", from=1-3, to=3-3]
\end{tikzcd}\]
commutes. If
$$ \overline{\J}ac_{g,n,d,r}(X, \beta) = \overline{\M}_{g,n}(X, \beta) \times_{\MM_{g,n}} \MM_{g,n,d,r} $$ 
(resp.  $\overline{\J}ac^{\mathrm{simp}}_{g,n,d,r}(X, \beta)$) is the stack whose sections coincide with those of $\overline{\J}_{g,n,d,r}(X, \beta)$ (resp. $\overline{\J}^{\mathrm{simp}}_{g,n,d,r}(X, \beta)$) and whose morphisms are given by diagrams \eqref{eqn: stack morphisms} together with isomorphisms $\mathcal{F} \stackrel{\simeq}{\to} \overline{g}^{\ast} \mathcal{F}'$, then $\overline{\J}_{g,n,d,r}(X, \beta)$ is the $\G_m$-rigidification of $\overline{\J}ac_{g,n,d,r}(X, \beta)$. The stack $\overline{\J}_{g,n,d,r}(X, \beta)$ admits a universal curve and a universal morphism to $X$, obtained by pulling back the universal family over $\overline{\M}_{g,n}(X, \beta)$. The stack $\overline{\J}ac_{g,n,d,r}(X, \beta)$ admits a universal sheaf over this universal curve. 

The stack $\overline{\J}_{g,n,d,r}(X, \beta)$ admits natural morphisms to the stacks $\overline{\M}_{g,n}(X, \beta)$ and $\MM_{g,n,d,r}^{\mathrm{rig}}$, obtained by forgetting respectively the sheaves and the morphisms to $X$.

\begin{prop} \label{prop: algebraicity result}
	The stack $\overline{\J}^{\mathrm{simp}}_{g,n,d,r}(X, \beta)$ is Deligne-Mumford and the natural forgetful morphism to $\overline{\M}_{g,n}(X, \beta)$ is representable and locally of finite presentation. If $X$ is smooth then any open Deligne-Mumford substack of $\overline{\J}^{\mathrm{simp}}_{g,n,d,r}(X, \beta)$ admits a natural perfect relative obstruction theory over $\MM_{g,n,d,r}^{\mathrm{simp, rig}}$.
\end{prop}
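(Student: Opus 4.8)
The plan is to analyse the forgetful morphism $p \colon \MM_{g,n,d,r}^{\mathrm{simp,rig}} \to \MM_{g,n}$ and then base change, using that by construction $\overline{\J}^{\mathrm{simp}}_{g,n,d,r}(X, \beta) = \overline{\M}_{g,n}(X, \beta) \times_{\MM_{g,n}} \MM_{g,n,d,r}^{\mathrm{simp,rig}}$, so that the forgetful morphism $c \colon \overline{\J}^{\mathrm{simp}}_{g,n,d,r}(X, \beta) \to \overline{\M}_{g,n}(X, \beta)$ is the base change of $p$ along $\overline{\M}_{g,n}(X, \beta) \to \MM_{g,n}$. First I would recall that the stack of coherent sheaves with fixed Hilbert polynomial on the universal prestable curve over $\MM_{g,n}$ is algebraic and locally of finite presentation over $\MM_{g,n}$, that being torsion-free (equivalently, pure of dimension one with support the whole fibre) and of uniform rank $r$ and degree $d$ are open conditions cutting out the algebraic substack $\MM_{g,n,d,r}$, and that simplicity is a further open condition by upper semicontinuity of $s \mapsto \dim_{\C} \mathrm{Hom}(\F_s, \F_s)$. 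Rigidification by the central $\G_m$ preserves algebraicity and local finite presentation (\cite{abramovichcortivistoli}). Since a simple sheaf has automorphism group $\G_m$, the objects of $\MM_{g,n,d,r}^{\mathrm{simp,rig}}$ have trivial automorphism groups relative to $\MM_{g,n}$ (using that $\underline{\mathrm{Hom}}(\F,\F) = \OO_S$ for an $S$-family of simple sheaves), so $\Delta_p$ is a monomorphism; hence $p$ is representable by algebraic spaces and locally of finite presentation, its fibres being the rigidified algebraic spaces of simple torsion-free sheaves on a fixed curve (cf.\ \cite{melocuj} in the case $r = 1$). Base changing along $\overline{\M}_{g,n}(X, \beta) \to \MM_{g,n}$, the morphism $c$ is again representable by algebraic spaces and locally of finite presentation; since $\overline{\M}_{g,n}(X, \beta)$ is Deligne--Mumford, factoring $\Delta_{\overline{\J}^{\mathrm{simp}}_{g,n,d,r}(X, \beta)}$ as the relative diagonal of $c$ (a monomorphism locally of finite type, hence unramified) followed by the pullback of the unramified morphism $\Delta_{\overline{\M}_{g,n}(X, \beta)}$, I would conclude that $\overline{\J}^{\mathrm{simp}}_{g,n,d,r}(X, \beta)$ is Deligne--Mumford.

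For the last assertion, assuming $X$ smooth, I would let $\mathcal{V} \subset \overline{\J}^{\mathrm{simp}}_{g,n,d,r}(X, \beta)$ be an open Deligne--Mumford substack, with universal curve $\pi_{\UU} \colon \UU\mathcal{V} \to \mathcal{V}$ and universal map $f_{\UU} \colon \UU\mathcal{V} \to X$ pulled back from $\overline{\M}_{g,n}(X, \beta)$. The point is that the relative cotangent complex $\mathbb{L}^{\bullet}_{\mathcal{V}/\MM_{g,n,d,r}^{\mathrm{simp,rig}}}$ governs deformations of a stable map with its source curve \emph{and} the sheaf on it held fixed, which coincide with deformations with only the source curve held fixed (the sheaf plays no role), so that Behrend's argument from \cite{behrendgw} applies verbatim and yields a perfect relative obstruction theory
\[ R^{\bullet}(\pi_{\UU})_{\ast}(f_{\UU}^{\ast} T_X) \longrightarrow \mathbb{L}^{\bullet}_{\mathcal{V}/\MM_{g,n,d,r}^{\mathrm{simp,rig}}}. \]
Equivalently, this is the restriction to $\mathcal{V}$ of the pullback along $c$ of the perfect relative obstruction theory \eqref{eqn: OT stable maps}; the pullback is well behaved because $\pi_{\UU}$ is flat and proper and $f_{\UU}^{\ast} T_X$ is locally free, so that $R^{\bullet}(\pi_{\UU})_{\ast}(f_{\UU}^{\ast} T_X)$ commutes with base change, and because $\mathbb{L}^{\bullet}_{\mathcal{V}/\MM_{g,n,d,r}^{\mathrm{simp,rig}}}$ is computed from the same universal curve and map.

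I expect the main obstacle to be the algebraic-geometry input in the first paragraph --- the algebraicity of the stack of coherent sheaves over $\MM_{g,n}$ and the representability of $p$, i.e.\ the fact that the rigidified moduli of simple torsion-free sheaves on a family of \emph{nodal} curves is an algebraic space, locally of finite presentation over the base, with no finite-type hypothesis available. Once these facts are in place, both the Deligne--Mumford property and the existence of the obstruction theory follow formally, the latter precisely because every ingredient of \eqref{eqn: OT stable maps} --- the universal curve, the universal map to $X$, and the vector bundle $T_X$ --- is pulled back from $\overline{\M}_{g,n}(X, \beta)$, so there is nothing genuinely new to compute.
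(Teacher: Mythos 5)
Your proof is correct and follows essentially the same route as the paper: show that $\MM^{\mathrm{simp,rig}}_{g,n,d,r} \to \MM_{g,n}$ is representable and locally of finite presentation, base change along $\overline{\M}_{g,n}(X,\beta) \to \MM_{g,n}$, deduce the Deligne--Mumford property from that of $\overline{\M}_{g,n}(X,\beta)$, and obtain the obstruction theory by pulling back \eqref{eqn: OT stable maps} along $c$. The only difference is in the first step, where the paper invokes Altman--Kleiman's theorem directly (the \'etale sheafification of the functor of simple torsion-free sheaves on a family of curves is an algebraic space, locally of finite presentation over the base), whereas you reassemble this from the algebraicity of the stack of coherent sheaves, openness of simplicity, and rigidification; both are standard and yield the same conclusion.
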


\begin{proof}
	We first show that the stack $\MM^{\mathrm{simp, rig}}_{g,n,d,r}$ is algebraic and locally of finite presentation over $\MM_{g,n}$. Consider a morphism from a scheme $S \to \MM_{g,n}$ and let
	$$ \MM_S = \MM^{\mathrm{simp, rig}}_{g,n,d,r} \times_{\MM_{g,n}} S. $$
	If $f : \mathcal{C} \to S$ is the family of prestable curves associated to $S \to \MM_{g,n}$ then $\MM_S$ is equivalent to the étale sheafification $(\mathbb{M}_S)_{\mathrm{\acute{e}t}}$ of the functor $\mathbb{M}_S$ assigning to an $S$-scheme $T$ the set of $T$-flat simple rank $r$, degree $d$, torsion-free coherent sheaves on $\mathcal{C}_T$. By \cite[Theorem 7.4]{altmankleiman}, $(\mathbb{M}_S)_{\mathrm{\acute{e}t}}$ is an algebraic space which is locally finitely presented over $S$, and so the natural morphism $\MM^{\mathrm{simp, rig}}_{g,n,d,r} \to \MM_{g,n}$ is representable and locally of finite presentation. As $\MM_{g,n}$ is well-known to be algebraic (see for instance \cite{hall}) it follows that $\MM^{\mathrm{simp, rig}}_{g,n,d,r}$ is algebraic.
	
	Base-changing to $\overline{\M}_{g,n}(X, \beta)$ shows that the natural morphism $c : \overline{\J}^{\mathrm{simp}}_{g,n,d,r}(X, \beta) \to \overline{\M}_{g,n}(X, \beta)$ is also representable and locally of finite presentation. As $\overline{\M}_{g,n}(X, \beta)$ is Deligne-Mumford, the same is therefore true for $\overline{\J}^{\mathrm{simp}}_{g,n,d,r}(X, \beta)$. In the case where $X$ is smooth, the pullback of the perfect relative obstruction theory \eqref{eqn: OT stable maps} along $c$ defines a perfect relative obstruction theory
	\begin{equation}
		\mathbb{E}_{\overline{\J}^{\mathrm{simp}}_{g,n,d,r}(X, \beta)/\MM^{\mathrm{simp, rig}}_{g,n,d,r}}^{\bullet} := c^{\ast} \mathbb{E}_{\overline{\M}_{g,n}(X, \beta)/\MM_{g,n}}^{\bullet} \to \mathbb{L}^{\bullet}_{\overline{\J}^{\mathrm{simp}}_{g,n,d,r}(X, \beta)/\MM^{\mathrm{simp, rig}}_{g,n,d,r}},
	\end{equation}
	which restricts to give a perfect relative obstruction theory for any open Deligne-Mumford substack of $\overline{\J}^{\mathrm{simp}}_{g,n,d,r}(X, \beta)$.
\end{proof}

\begin{remark}
	As $\overline{\J}ac_{g,n,d,r}(X, \beta)$ is a $\G_m$-gerbe over the rigidified stack $\overline{\J}_{g,n,d,r}(X, \beta)$, it follows from Proposition \ref{prop: algebraicity result} that the stack $\overline{\J}ac^{\mathrm{simp}}_{g,n,d,r}(X, \beta)$ is also algebraic.
\end{remark}	

\subsection{Substacks Defined by Polarisations} \label{section: substacks defined by polarisations} Let $\pi_{\UU} : \UU\overline{\M}_{g,n}(X, \beta) \to \overline{\M}_{g,n}(X, \beta)$ be a relatively ample $\Q$-invertible sheaf $\LL_{\UU}$ on the universal curve $\UU\overline{\M}_{g,n}(X, \beta)$. For any family of coherent sheaves $\F$ over a family of maps $(\pi : \mathcal{C} \to S; \sigma_1, \dots, \sigma_n; f : \mathcal{C} \to X)$ corresponding to a morphism $S \to \overline{\M}_{g,n}$, by pulling back $\LL_{\UU}$ we obtain a relatively ample $\Q$-invertible sheaf $\LL_S$ over $\mathcal{C}$.

\begin{defn}
	We say $\F$ is $\LL_{\UU}$-\emph{(semi)stable} if for each geometric point $s \in S$, the sheaf $\F_s = \F \times_{S} \spec \C$ is Gieseker (semi)stable with respect to the ample $\Q$-invertible sheaf $\LL_s$.
\end{defn}

The conditions of being stable/semistable are both open in flat families, so there are open substacks
$$ \overline{\J}ac_{g,n,d,r}^{(s)s}(X, \beta)(\LL_{\UU}) \subset \overline{\J}ac_{g,n,d,r}(X, \beta) \text{   and   } \overline{\J}_{g,n,d,r}^{(s)s}(X, \beta)(\LL_{\UU}) \subset \overline{\J}_{g,n,d,r}(X, \beta) $$
parametrising objects whose sheaves are $\LL_{\UU}$-(semi)stable. As Gieseker stable sheaves are always simple (cf. \cite{huybrechts_lehn} Corollary 1.2.8), we have open inclusions
$$ \overline{\J}ac_{g,n,d,r}^{s}(X, \beta)(\LL_{\UU}) \subset \overline{\J}ac_{g,n,d,r}^{\mathrm{simp}}(X, \beta) \text{   and   } \overline{\J}_{g,n,d,r}^{s}(X, \beta)(\LL_{\UU}) \subset \overline{\J}_{g,n,d,r}^{\mathrm{simp}}(X, \beta). $$
Applying Proposition \ref{prop: algebraicity result}, the stack $\overline{\J}ac_{g,n,d,r}^{s}(X, \beta)(\LL_{\UU})$ is algebraic, and the rigidified stack $\overline{\J}_{g,n,d,r}^{s}(X, \beta)(\LL_{\UU})$ is Deligne-Mumford and admits a perfect relative obstruction theory over $\MM_{g,n,d,r}^{\mathrm{simp,rig}}$.

If $(C; x_1, \dots, x_n; f : C \to X)$ is a stable map in $\overline{\M}_{g,n}(X, \beta)$ with $C$ irreducible, the slope function \eqref{eqn: multi slope stability nodal curve} reduces to the usual Mumford slope $\mu(E) = \deg(E)/\mathrm{rk}(E)$. If $C$ is additionally non-singular then all torsion-free sheaves on $C$ are locally free. This implies that $\overline{\J}_{g,n,d,r}^{ss}(X, \beta)(\LL_{\UU})$ contains all slope semistable vector bundles over stable maps whose source curve is non-singular. In particular, if $\M_{g,n}(X, \beta)$ is non-empty then the stack $\overline{\J}_{g,n,d,r}^{ss}(X, \beta)(\LL_{\UU})$ is also non-empty. 

\begin{remark}
	It follows from Inequality \ref{eqn: multi slope stability nodal curve} that replacing $\LL_{\UU}$ with a positive integral power of $\LL_{\UU}$ does not alter the stability condition. As such, we are free to assume as and when necessary that $\LL_{\UU}$ is a genuine relatively very ample invertible sheaf.
\end{remark} 

\begin{remark}
	Proposition \ref{prop: algebraicity result} does not address the issue as to whether $\overline{\J}ac_{g,n,d,r}^{ss}(X, \beta)(\LL_{\UU})$ and $\overline{\J}ac_{g,n,d,r}^{ss}(X, \beta)(\LL_{\UU})$ are algebraic in general. The algebraicity of these stacks will follow as a consequence of Theorem \ref{thm: good moduli spaces as GIT quotients}.
\end{remark}

\begin{remark}
	Given an invertible sheaf $\mathcal{N}$ on the universal curve $\UU\overline{\M}_{g,n}(X, \beta)$, one may consider the stacks $\overline{\J}ac_{g,n,d,r}^{ss}(X, \beta)(\LL, \mathcal{N})$ and $\overline{\J}_{g,n,d,r}^{ss}(X, \beta)(\LL, \mathcal{N})$ parametrising $(\mathcal{L}, \mathcal{N})$-twisted semistable sheaves of degree $d$ and uniform rank $r$ over stable maps in $\overline{\mathcal{M}}_{g,n}(X, \beta)$ (that is, we impose that for each geometric point $s \in S$, $F_s \otimes \mathcal{N}_s$ is Gieseker semistable with respect to $\mathcal{L}_s$). This does not widen the class of stacks under consideration, since twisting by $\mathcal{N}$ yields isomorphisms
	$$ \overline{\J}ac_{g,n,d,r}^{ss}(X, \beta)(\LL, \mathcal{N}) \cong \overline{\J}ac_{g,n,d,r}^{ss}(X, \beta)(\LL, \OO) $$ 
	and 
	$$ \overline{\J}_{g,n,d,r}^{ss}(X, \beta)(\LL, \mathcal{N}) \cong \overline{\J}_{g,n,d,r}^{ss}(X, \beta)(\LL, \OO). $$
	In particular, the compactified Jacobians considered by Kass--Pagani \cite{kasspagani} and Melo \cite{melocuj} fit into the framework considered in this paper, since the stability conditions considered by these authors can always be realised as twisted stability conditions.
\end{remark}

\subsection{Basic Properties of the Stacks} We summarise some of the basic properties of the stacks $\overline{\J}_{g,n,d,r}^{ss}(X, \beta)(\LL_{\UU})$.

\begin{enumerate}
	\item The stack $\overline{\J}_{g,n,d,r}^{ss}(X, \beta)(\LL_{\UU})$ is algebraic and of finite presentation over $\C$; this follows immediately from Theorem \ref{thm: Theorem A}.
	\item The stack $\overline{\J}_{g,n,d,r}^{ss}(X, \beta)(\LL_{\UU})$ is universally closed. This will follow once we have established the projectivity of the good moduli space $\overline{J}_{g,n,d,r}^{ss}(X, \beta)(\LL_{\UU})$ of this stack, as good moduli space morphisms are universally closed.
	\item If $\overline{\J}_{g,n,d,r}^{s}(X, \beta)(\LL_{\UU}) = \overline{\J}_{g,n,d,r}^{ss}(X, \beta)(\LL_{\UU})$ then the stack is Deligne-Mumford and the natural forgetful morphism to $\overline{\M}_{g,n}(X, \beta)$ is representable. If $X$ is smooth then the stack admits a natural relative perfect obstruction theory over the stack $\MM_{g,n,d,r}^{\mathrm{simp,rig}}$; this follows from Proposition \ref{prop: algebraicity result}.
	\item If $\overline{\J}_{g,n,d,r}^{s}(X, \beta)(\LL_{\UU}) = \overline{\J}_{g,n,d,r}^{ss}(X, \beta)(\LL_{\UU})$ then the stack is proper. One way to see this is to notice that every object of $\overline{\J}_{g,n,d,r}^{ss}(X, \beta)(\LL_{\UU})$ has a finite automorphism group scheme, or in other words the inertia stack of $\overline{\J}_{g,n,d,r}^{ss}(X, \beta)(\LL_{\UU})$ is finite. It then follows from \cite{conradkm} that the coarse moduli space morphism $\overline{\J}_{g,n,d,r}^{ss}(X, \beta)(\LL_{\UU}) \to \overline{J}_{g,n,d,r}^{ss}(X, \beta)(\LL_{\UU})$ is proper, so the properness of the stack $\overline{\J}_{g,n,d,r}^{ss}(X, \beta)(\LL_{\UU})$ is a consequence of the projectivity of $\overline{J}_{g,n,d,r}^{ss}(X, \beta)(\LL_{\UU})$.
\end{enumerate}

In the case $r = 1$, we have the following additional properties:

\begin{enumerate}
	\setcounter{enumi}{4}
	\item If $n \geq 1$ then the universal sheaf on $\overline{\J}ac_{g,n,d,1}^{ss}(X, \beta)(\LL_{\UU})$ descends to the rigidification $\overline{\J}_{g,n,d,1}^{ss}(X, \beta)(\LL_{\UU})$; this follows from either \cite[Lemma 3.35]{kptheta} or \cite[Proposition 3.5]{melocuj}.
	\item If the stack of stable maps $\overline{\M}_{g,n}(X, \beta)$ is smooth then the stack $\overline{\J}_{g,n,d,1}^{ss}(X, \beta)(\LL_{\UU})$ is also smooth; the argument given in the proof of \cite[Proposition 3.7]{melocuj} carries over. If in addition the stack $\overline{\M}_{g,n}(X, \beta)$ is irreducible and if the substack $\M_{g,n}(X, \beta)$ is dense in $\overline{\M}_{g,n}(X, \beta)$ then the stack $\overline{\J}_{g,n,d,1}^{ss}(X, \beta)(\LL_{\UU})$ is irreducible and of dimension $\dim \overline{\M}_{g,n}(X, \beta) + g$, as in this case the open substack parametrising line bundles over stable maps in $\M_{g,n}(X, \beta)$ is smooth and dense. This applies for instance when the stack of stable maps $\overline{\M}_{g,n}(X, \beta)$ is taken to be $\overline{\M}_{g,n}$ or $\overline{\M}_{0,n}(\PP^b, d')$.
\end{enumerate}

\section{GIT Constructions of Moduli Spaces of Stable Maps and of Multi-Gieseker Semistable Sheaves} \label{section: BS GRT constructions}

The GIT construction of the moduli spaces appearing in the statement of Theorem \ref{thm: Theorem A} relies on two existing GIT constructions, the first by Baldwin-Swinarski \cite{baldwinswinarski} and the second by Greb-Ross-Toma \cite{grebrosstoma} \cite{grebrosstomamaster}. In preparation for the following section, where these constructions are combined to construct the moduli spaces $\overline{J}_{g,n,d,r}^{ss}(X, \beta)(\LL_{\UU})$, we give a sketch of both of these constructions, and explain how the latter construction may be extended to the relative setting. To avoid the notation from becoming too complicated, we sketch the Greb-Ross-Toma construction in a general setting, largely following the notation used in \cite{grebrosstoma} and \cite{grebrosstomamaster}.

\subsection{The Construction of Baldwin and Swinarski} \label{section: BS construction} Let $X$ be a projective variety, and fix the discrete invariants $g$, $n$ and $\beta$. For simplicity assume there is at least one marked point (the construction for $n = 0$ marked points involves only minor changes in notation). Fix a closed embedding $i : X \hookrightarrow \PP^b$ such that $i_{\ast} \beta = d'[\ell]$, where $[\ell] \in H_2(\PP^b, \Z)_+$ is the class of a line. Suppose $(C; x_1, \dots, x_n; f)$ is a stable map to $X$ of class $\beta$ (which we may also consider as a stable map to $\PP^b$ of class $d'[\ell]$). Set\footnote{$a = 10$ is the smallest choice of $a$ for which the statement of \cite[Theorem 5.21]{baldwinswinarski} holds for any choice of valid discrete invariants $g, n, d'$; we refer the reader to the remark following \emph{loc. cit.}}
$$ e = \deg((\omega_C(x_1 + \cdots + x_n) \otimes f^{\ast} \OO_{\PP^b}(3))^{10}) = 10(2g - 2 + n + 3d'). $$
The line bundle $(\omega_C(x_1 + \cdots + x_n) \otimes f^{\ast} \OO_{\PP^b}(3))^{10}$ is non-special and very ample, and corresponds to an embedding of $C$ inside $\PP(H^0(C, (\omega_C(x_1 + \cdots + x_n) \otimes f^{\ast} \OO_{\PP^b}(3))^{10}))$. Let 
$$ W = \C^{e-g+1}. $$
Choose an isomorphism $W \cong H^0(C, (\omega_C(x_1 + \cdots + x_n) \otimes f^{\ast} \OO_{\PP^b}(3))^{10})$; this yields an induced embedding $C \hookrightarrow \PP(W)$. The graph of $f$ gives in turn a closed embedding $C \hookrightarrow \PP(W) \times \PP^b$. After appending the markings $(x_i, f(x_i))$, we have an associated point of the scheme
$$ \HH = \mathrm{Hilb}(\PP(W) \times \PP^b, P_0) \times (\PP(W) \times \PP^b)^{\times n}. $$
Here $\mathrm{Hilb}(\PP(W) \times \PP^b, P_0)$ is the Hilbert scheme parametrising curves $C' \subset \PP(W) \times \PP^b$ with Hilbert polynomial
$$ P_0(m_W, m_b) = \chi(\OO_{\PP(W)}(m_W) \otimes \OO_{\PP^b}(m_b) \otimes \OO_{C'}) = em_W + dm_b - g + 1. $$
Let $(\phi_{\UU} : \UU\HH \to \HH; \tau_1, \dots, \tau_n)$ denote the universal family over $\HH$, given by taking the product of the universal family over $\mathrm{Hilb}(\PP(W) \times \PP^b, P_0)$ with $n$ copies of the identity map on $\PP(W) \times \PP^b$. Let $\HH' \subset \HH$ denote the closed subscheme consisting of all points $(h; x_1, \dots, x_n) \in \HH$ where the marked points $x_i$ lie on the curve $\UU\HH_h \subset \PP(W) \times \PP^b$. As explained in \cite[Sections 2.3 and 5.1]{fultonpandharipande}, there exists a locally closed subscheme $I = I_{X, \beta} \subset \HH'$ consisting of all points $(h; x_1, \dots, x_n) \in \HH'$ such that the following conditions are satisfied:\footnote{The subscheme $\HH'$ is what Baldwin and Swinarski call $I$ and the subscheme $I$ is what Baldwin and Swinarski call $J$; we have elected to change the notation so as not to confuse notation with that for the moduli spaces of the stacks $\overline{\J}_{g,n,d,r}^{ss}(X, \beta)(\LL_{\UU})$.}
\begin{enumerate}
	\item $(\UU\HH_h; x_1, \dots, x_n)$ is a prestable marked curve;
	\item the projection $\UU\HH_h \to \PP(W)$ is a non-degenerate embedding;
	\item the projection $p_b : \UU\HH_h \to \PP^b$ factors through the inclusion $i : X \hookrightarrow \PP^b$; 
	\item there is an equality of homology classes $(p_b)_{\ast}[\UU\HH_h] = \beta \in H_2(X, \Z)_+$; and
	\item there exists an isomorphism of line bundles on the curve $\UU\HH_h$
	$$ \OO_{\PP(W)}(1) \otimes \OO_{\PP^b}(1) \otimes \OO_{\UU\HH_h} \cong (\omega_{\UU\HH_h}(x_1 + \cdots + x_n))^{10} \otimes \OO_{\PP^b}(31) \otimes \OO_{\UU\HH_h} $$  
	(more formally, if $S \to \HH'$ is a morphism from a scheme, then this morphism factors through $I$ only if the line bundles $\OO_{\PP(W)}(1) \otimes \OO_{\PP^b}(1) \otimes \OO_{\UU\HH'_S}$ and $(\omega_{\UU\HH'_S/S}(\tau_1(S) + \cdots + \tau_n(S)))^{10} \otimes \OO_{\PP^b}(31) \otimes \OO_{\UU\HH'_S}$ differ by the pullback of a line bundle on $S$).
\end{enumerate}
Let $\overline{I}$ denote the closure of $I$ in $\HH'$, and consider the restriction of the universal family to $I$, denoted as $(\phi_{\UU} : \UU I \to I; \tau_1, \dots, \tau_n)$. There is a universal morphism $\UU I \to X$ given by the composition of the closed embedding 
$$\UU I \hookrightarrow I \times \PP(W) \times \PP^b \times (\PP(W) \times \PP^b)^{\times n}$$ 
with the projection onto the first $\PP^b$ factor. The restriction of this morphism over each point of $I$ defines a stable map to $X$ of class $\beta$. It follows that the scheme $I$ admits a natural forgetful morphism to $\overline{\M}_{g,n}(X, \beta)$, given by forgetting the embedding of the stable map inside $\PP(W) \times \PP^b$. 

Letting $GL(W)$ act in the usual way on $\PP(W)$ and act trivially on $\PP^b$, the corresponding diagonal action on $\PP(W) \times \PP^b$ gives rise to an induced action of $GL(W)$ on $\HH$, under which the subschemes $\HH'$, $I$ and $\overline{I}$ are all invariant. This action descends to an action of $PGL(W)$. The family $(\phi_{\UU} : \UU I \to I; \tau_1, \dots, \tau_n)$ has the local universal property for $\overline{\mathcal{M}}_{g,n}(X, \beta)$, and two points $i_1, i_2 \in I$ correspond to isomorphic stable maps if and only if $i_1$ and $i_2$ lie in the same $GL(W)$-orbit (cf. \cite[Proposition 3.4]{baldwinswinarski}).

For sufficiently large positive integers $m_W$ and $m_b$, consider the Grothendieck embedding $$\mathrm{Hilb}(\PP(W) \times \PP^b, P_0) \hookrightarrow \PP(\Lambda^{P_0(m_W, m_b)} Z_{m_W, m_b}),$$ 
where
$$ Z_{m_W,m_b} = H^0(\PP(W) \times \PP^b, \OO_{\PP(W)}(m_W) \otimes \OO_{\PP^b}(m_b)). $$
We extend this to a closed embedding
$$ \HH = \mathrm{Hilb}(\PP(W) \times \PP^b, P_0) \times (\PP(W) \times \PP^b)^{\times n} \hookrightarrow \PP(\Lambda^{P_0(m_W, m_b)} Z_{m_W, m_b}) \times (\PP(W) \times \PP^b)^{\times n} $$
by taking the identity on $(\PP(W) \times \PP^b)^{\times n}$. After choosing a positive integer $m_{\mathrm{pts}}$, applying appropriate Veronese and Segre embeddings gives rise to a $GL(W)$-equivariant closed embedding of $\HH$ inside a single projective space:
\begin{equation} \label{eqn: embedding 1}
	\theta_{m_W, m_b, m_{\mathrm{pts}}} : \HH \hookrightarrow \PP \left( \Lambda^{P_0(m_W, m_b)} Z_{m_W, m_b} \otimes \bigotimes_{i=1}^n ( \sym^{m_{\mathrm{pts}}} W \otimes \sym^{m_{\mathrm{pts}}} \C^{b+1} ) \right). 
\end{equation}
Pulling back the $\OO(1)$ along this embedding gives a very ample linearisation $L_{m_W, m_b, m_{\mathrm{pts}}}$ for the action of $GL(W)$ on both $\HH$ and the closed subscheme $\overline{I}$. We may now state the main result of Baldwin and Swinarski.

\begin{theorem}[\cite{baldwinswinarski}, Corollary 6.2] \label{thm: baldwin swinarski}
	Suppose $(m_W, m_b, m_{\mathrm{pts}})$ is a triple of positive integers satisfying the inequalities in the statement of Theorem 6.1 of loc. cit. (such triples always exist). Then for the induced action of $SL(W)$ on $\overline{I}$ there are equalities 
	$$\overline{I}^{ss}(L_{m_W, m_b, m_{\mathrm{pts}}}) = \overline{I}^{s}(L_{m_W, m_b, m_{\mathrm{pts}}}) = I.$$ 
	Moreover $\overline{M}_{g,n}(X, \beta)$ is isomorphic to the GIT quotient $\overline{I} \sslash_{L_{m_W, m_b, m_{\mathrm{pts}}}} SL(W)$. In particular $\overline{M}_{g,n}(X, \beta)$ is a geometric quotient of $I$.
\end{theorem}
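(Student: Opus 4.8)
The plan is to follow the classical Gieseker--Mumford strategy for constructing $\overline{M}_g$ via Hilbert points, in the form adapted to pointed curves by Mumford and to stable maps by Fulton--Pandharipande, and to reduce the entire statement to an application of the Hilbert--Mumford numerical criterion on the invariant subscheme $\overline{I} \subset \HH$, supplemented by a boundedness argument and the local universal property of the family $\phi_{\UU} : \UU I \to I$. Concretely, a one-parameter subgroup $\lambda : \G_m \to SL(W)$ is, up to conjugacy, the same as a weighted flag $0 \subsetneq W_1 \subsetneq \cdots \subsetneq W_s = W$; for a point $x = (h; x_1, \dots, x_n) \in \overline{I}$ one must compute the weight $\mu^{L_{m_W, m_b, m_{\mathrm{pts}}}}(x, \lambda)$ as a sum of the weight of the Hilbert point $h$ under $\lambda$ and the weight coming from the $n$ point factors $(\PP(W) \times \PP^b)^{\times n}$ scaled by $m_{\mathrm{pts}}$. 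I would treat the first contribution by the Knudsen--Mumford calculation: for $m_W \gg 0$ the space $Z_{m_W, m_b}$ is spanned by products of sections of $\OO_{\PP(W)}(1)$ and $\OO_{\PP^b}(1)$ restricted to the curve $\UU I_h$, so the $\lambda$-weight of $\Lambda^{P_0(m_W, m_b)} Z_{m_W, m_b}$ is, modulo lower-order terms in $m_W$, a sum over a monomial basis of the $\lambda$-weights of the restrictions, which can be rewritten in terms of the Hilbert functions of the subsheaves of $A^{10}|_{\UU I_h}$ generated by the flag steps $W_j$, where $A = \omega_{\UU I_h}(x_1 + \cdots + x_n) \otimes f^{\ast} \OO_{\PP^b}(3)$. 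The second contribution is nonnegative, and is strictly positive exactly when some marked point $x_i$ fails to lie in the relevant member of the flag.

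For the stability direction I would show that $I \subseteq \overline{I}^{s}$, i.e.\ that $\mu^{L}(x, \lambda) > 0$ for every nontrivial $\lambda$ and every $x \in I$. Here one uses the Gieseker-type estimates bounding $h^0$ of subsheaves of $A^{10}$ restricted to subcurves --- these are the inequalities collected in Theorem 6.1 of \cite{baldwinswinarski}, and are exactly where the precise exponent $10$, the twist by $\OO_{\PP^b}(3)$ (so that $A$ is nonspecial and very ample, with $e = 10(2g - 2 + n + 3d')$), and the auxiliary twist making $\OO_{\PP(W)}(1) \otimes \OO_{\PP^b}(1)$ restrict to the prescribed line bundle play their role --- together with the definition of a stable map: no rational component is contracted by $f$ with fewer than three special points, and no contracted genus-one component has no special point. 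These combine to make the Hilbert-point term already $\geq 0$, strictly positive unless the flag is adapted to a contracted or low-valence component, in which case choosing $m_{\mathrm{pts}}$ large relative to $m_W, m_b$ makes the marked-point term supply the missing positivity. Conversely, to prove $\overline{I}^{ss} \subseteq I$, I would take $x \in \overline{I}^{ss}$ and run the criterion contrapositively: a one-parameter subgroup adapted to any ``bad'' feature of $\UU I_h$ --- an embedded or non-reduced point, a component of the wrong bidegree, a non-nodal singularity, a marked point sitting at a node or colliding with another, or a failure of $\OO_{\PP(W)}(1) \otimes \OO_{\PP^b}(1)|_{\UU I_h}$ to be the prescribed twist of $A^{10}$ --- would destabilise $x$. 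Ruling all of these out forces $x$ to satisfy conditions (1)--(5) cutting out $I$, so $x \in I$; combined with $I \subseteq \overline{I}^{s} \subseteq \overline{I}^{ss}$ this yields $\overline{I}^{ss}(L) = \overline{I}^{s}(L) = I$.

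Granting the equalities of loci, Mumford's GIT produces a projective geometric quotient $\overline{I} \sslash_{L_{m_W, m_b, m_{\mathrm{pts}}}} SL(W) = I / SL(W)$. To identify it with $\overline{M}_{g,n}(X, \beta)$ I would invoke the local universal property of $(\phi_{\UU} : \UU I \to I; \tau_1, \dots, \tau_n; \UU I \to X)$ for $\overline{\M}_{g,n}(X, \beta)$ together with the fact (\cite{baldwinswinarski} Proposition 3.4) that two points of $I$ lie in a common $SL(W)$-orbit if and only if the corresponding stable maps are isomorphic; the standard descent argument identifying a geometric quotient of a scheme carrying a family with the local universal property as the coarse moduli space then finishes the proof.

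The hard part, and essentially the whole content of the theorem, is the numerical-criterion weight estimate: controlling the Knudsen--Mumford weight of the Hilbert point in terms of $A^{10}$ and then balancing it against the marked-point contribution tightly enough that the thresholds on the triple $(m_W, m_b, m_{\mathrm{pts}})$ can all be met simultaneously. Checking that such triples exist --- the content of the long list of inequalities in Theorem 6.1 of \cite{baldwinswinarski} --- is the crux, and is where all of the case analysis over the combinatorial types of the source curve and the distribution of the marked points is concentrated.
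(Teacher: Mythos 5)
The paper does not actually prove this statement: it is imported verbatim as Corollary 6.2 of \cite{baldwinswinarski}, so the paper's ``proof'' is the citation, and your reconstruction has to be measured against Baldwin and Swinarski's own argument. About half of your sketch matches theirs. The linearisation is indeed the Segre--Veronese combination of the Grothendieck embedding of $\mathrm{Hilb}(\PP(W)\times\PP^b, P_0)$ with the $n$ point factors; the Hilbert--Mumford weight does split into a Knudsen--Mumford-type Hilbert-point term plus marked-point terms scaled by $m_{\mathrm{pts}}$; the inclusion $\overline{I}^{ss}(L_{m_W,m_b,m_{\mathrm{pts}}}) \subseteq I$ is proved contrapositively by exhibiting destabilising one-parameter subgroups adapted to each pathology (non-reduced structure, non-nodal singularities, markings at nodes or colliding, contracted components with too few special points, wrong class, failure of condition (5)); and the final identification of the geometric quotient with $\overline{M}_{g,n}(X,\beta)$ via the local universal property of $\UU I \to I$ and orbit equivalence (\cite{baldwinswinarski} Proposition 3.4) is exactly as in the source.

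The genuine gap is in the other inclusion. You propose to prove $I \subseteq \overline{I}^{s}(L_{m_W,m_b,m_{\mathrm{pts}}})$ by directly verifying $\mu^{L}(x,\lambda) > 0$ for every $x \in I$ and every nontrivial $\lambda$, using Gieseker-type bounds on $h^0$ of subsheaves generated by the flag steps. This is precisely the step Baldwin and Swinarski do \emph{not} carry out, and the one they emphasise they could not generalise: even in Gieseker's construction of $\overline{M}_g$ the direct weight computation is only performed for smooth curves, with nodal stable curves handled by semistable replacement, and for stable maps (whose source curves may be arbitrarily degenerate and whose parameter space $I$ need not be irreducible) no direct verification of stability via the numerical criterion is known. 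Baldwin and Swinarski instead deduce $I \subseteq \overline{I}^{ss}$ from the independently established properness of $\overline{M}_{g,n}(X,\beta)$ (Fulton--Pandharipande) together with Gieseker's theorem for $\overline{M}_g$, by a semistable-replacement argument run over their range of linearisations, and only then upgrade semistability to stability using finiteness of stabilisers and the already-proved containment $\overline{I}^{ss} \subseteq I$. Your assertion that the Hilbert-point term is ``already $\geq 0$, strictly positive unless the flag is adapted to a contracted or low-valence component'' is the unproved crux, not a consequence of the listed estimates; as written, the forward inclusion in your proposal does not go through.
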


\subsection{Quiver Representation Stability} We now turn to the construction of Greb-Ross-Toma \cite{grebrosstoma} \cite{grebrosstomamaster}.

Given $k \in \mathbb{N}$, consider the labelled quiver
$$ \mathcal{Q} = (\mathcal{Q}_0, \mathcal{Q}_1, h, t : \mathcal{Q}_1 \to \mathcal{Q}_0, H : \mathcal{Q}_1 \to \mathbf{Vect}^{\mathrm{fd}}_{\C}) $$
with vertex set
$$ \mathcal{Q}_0 = \{v_i, w_j : i, j = 1, \dots, k\}, $$
arrow set
$$ \mathcal{Q}_1 = \{\alpha_{ij} : i, j = 1, \dots, k\}, $$
and with heads and tails given by
$$ h(\alpha_{ij}) = v_i, \quad t(\alpha_{ij}) = w_j $$
(the functor $H$ will be specified momentarily). Let $H_{ij}$ be the vector space $H(\alpha_{ij})$. 

\begin{defn}
    A \emph{representation} of the labelled quiver $\mathcal{Q}$ consists of the data 
    $$\psi = (\{V_i\}_{i=1}^k, \{W_j\}_{j=1}^k, \{\psi_{ij}\}_{i,j = 1}^k),$$
    where the $V_i$ and $W_j$ are finite-dimensional $\C$-vector spaces and where each $\psi_{ij}$ is a linear map $V_i \otimes H_{ij} \to W_j$. 
    
    Morphisms of representations of $\mathcal{Q}$ are defined in analogy with morphisms of ordinary quiver representations (cf. \cite{kingquiverreps}); in particular, a subrepresentation $\psi'$ of $\psi$ is specified by choices of subspaces $V_i' \subset V_i$ and $W_j' \subset W_j$, such that each $\psi_{ij}$ sends $V_i' \otimes H_{ij}$ to $W_j'$. 
    
    The \emph{dimension vector} of $\psi$ is the vector $\underline{d} = (\dim V_1, \dim W_1, \dots, \dim V_k, \dim W_k)$.
\end{defn}

Write the dimension vector of $M = \bigoplus_j V_j \oplus W_j$ as $\underline{d} = (d_{11}, d_{12}, \dots, d_{k1}, d_{k2})$. Given a tuple $\sigma = (\sigma_1, \dots, \sigma_k)$ of non-negative rational numbers, not all of which are zero, define $\theta_{\sigma} = (\theta_{11}, \theta_{12}, \dots, \theta_{k1}, \theta_{k2})$ by setting
$$ \theta_{j1} = \frac{\sigma_j}{\sum_i \sigma_i d_{i1}}, \quad \theta_{j2} = - \frac{\sigma_j}{\sum_i \sigma_i d_{i2}}. $$
Then, for any representation $M' = \bigoplus_j V_j' \oplus W_j'$, define
$$ \theta_{\sigma}(M') = \sum_{j=1}^k (\theta_{j1} \dim V_j' + \theta_{j2} \dim W_j'). $$

\begin{defn}
	Let $M$ be a $\mathcal{Q}$-representation with dimension vector $\underline{d}$. We say that $M$ is \emph{(semi)stable with respect to $\sigma$} if for all proper subrepresentations $M' \subset M$, we have $\theta_{\sigma}(M') < (\leq) \ 0$.
\end{defn}

Analogously to the case of ordinary quiver representations, every $\sigma$-semistable $\mathcal{Q}$-representation admits a Jordan-H\"older filtration, and hence an \emph{associated graded} object $\mathrm{gr}_{\sigma}(M)$. This gives a notion of \emph{S-equivalence} for $\sigma$-semistable representations; that is, two $\sigma$-semistable representations $M$ and $M'$ are equivalent if $\mathrm{gr}_{\sigma}(M) \cong \mathrm{gr}_{\sigma}(M')$. 

Fixing a dimension vector $\underline{d}$, let $R$ be the vector space
$$ R = \mathrm{Rep}(\mathcal{Q},d) = \bigoplus_{i, j = 1}^k \mathrm{Hom}(\C^{d_{i1}} \otimes H_{ij}, \C^{d_{j2}}). $$
Each point of $R$ corresponds to a representation of dimension vector $\underline{d}$. There is a natural linear left action on $R$ by
$$ \hat{G} = \prod_{j=1}^k (GL_{d_{j1}}(\C) \times GL_{d_{j2}}(\C)), $$
where $\hat{G}$ acts by base change automorphisms. Let 
$$ \Delta = \{ (t \cdot \mathrm{id}, \dots, t \cdot \mathrm{id}) \in \hat{G} : t \in \G_m \} $$ 
be the kernel of the representation $\hat{G} \to GL(R)$ and let $G = \hat{G} / \Delta$. Given an integral vector $\theta = (\theta_{j1}, \theta_{j2}, \dots, \theta_{k1}, \theta_{k2}) \in \Z^{2k}$, let $\chi_{\theta}$ be the character of $\hat{G}$ given by
$$ \chi_{\theta} : g \mapsto \prod_{j=1}^k (\det(g_{j1})^{-\theta_{j1}} \cdot \det(g_{j2})^{-\theta_{j2}}). $$
Assume $\underline{d}$ and $\theta$ are chosen so that $\sum_j (\theta_{j1} d_{j1} + \theta_{j2} d_{j2}) = 0$, so that we may (and do) view $\chi_{\theta}$ as a character of $G$. Let $R^{\theta-(s)s} \subset R$ denote the GIT (semi)stable loci for the action of $G$ on $R$ with the linearisation $\OO_R(\chi_{\theta})$, the trivial bundle on $R$ twisted by the character $\chi_{\theta}$, and let $p : R^{\theta-ss} \to R \sslash_{\theta} G$ denote the resulting good quotient. In this setting, the results of King \cite{kingquiverreps} are as follows.

\begin{prop}[\cite{grebrosstoma} Theorem 5.5] \label{prop: quiver GIT}
	Suppose $\theta = c\theta_{\sigma}$ for some $\sigma$ and some positive integer $c$, where $c$ is chosen such that $\theta$ is integral.
	\begin{enumerate}
		\item A point of $R$ corresponding to a $\mathcal{Q}$-representation $M$ is GIT (semi)stable with respect to $\theta$ if and only if $M$ is (semi)stable with respect to $\sigma$.
		\item If $r_1, r_2 \in R^{\theta-ss}$ are points corresponding to representations $M_1$ and $M_2$ respectively, then $p(r_1) = p(r_2)$ if and only if $M_1$ and $M_2$ are S-equivalent $\sigma$-semistable representations.
	\end{enumerate}
\end{prop}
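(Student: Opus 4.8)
\emph{Proof plan.} The plan is to reduce everything to King's theorem on moduli of representations of quivers \cite{kingquiverreps}, of which Proposition \ref{prop: quiver GIT} is the labelled-quiver incarnation recorded in \cite{grebrosstoma}. First I would unwind the labelling: choosing, for each pair $(i,j)$, a basis of $H_{ij}$ replaces the single labelled arrow $\alpha_{ij}$ by $\dim_{\C} H_{ij}$ ordinary arrows from $w_j$ to $v_i$, and a representation $\psi = (\{V_i\}, \{W_j\}, \{\psi_{ij}\})$ of the labelled quiver $\mathcal{Q}$ becomes literally the same datum as a representation of the resulting ordinary quiver. Under this identification the representation space $R = \mathrm{Rep}(\mathcal{Q}, \underline{d})$, the group $\hat{G}$ and its central torus $\Delta$, the quotient $G = \hat{G}/\Delta$, the $G$-action on $R$ and the character $\chi_\theta$ all coincide with the objects in King's setup, and a subspace datum $(\{V_i'\}, \{W_j'\})$ is a subrepresentation for $\mathcal{Q}$ if and only if it is one for the ordinary quiver. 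So it suffices to match the stability conditions.

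For that I would compare the numerical criteria directly. King's $\theta$-(semi)stability for the linearisation $\OO_R(\chi_\theta)$ — valid precisely when $\sum_j(\theta_{j1}d_{j1} + \theta_{j2}d_{j2}) = 0$, so that $\chi_\theta$ is trivial on $\Delta$ and descends to $G$ — asserts that $[M] \in R$ is (semi)stable if and only if $\theta(M') < (\le)\, 0$ for every proper (nonzero) subrepresentation $M' \subset M$, where $\theta(M') = \sum_j(\theta_{j1}\dim V_j' + \theta_{j2}\dim W_j')$. Taking $\theta = c\,\theta_\sigma$, one checks $\sum_j(\theta_{j1}d_{j1} + \theta_{j2}d_{j2}) = 1 - 1 = 0$, so the hypothesis is met, $\chi_\theta$ descends to $G$, and $\theta(M') = c\,\theta_\sigma(M')$; hence King's condition becomes exactly $\theta_\sigma(M') < (\le)\, 0$ for all proper (nonzero) $M' \subset M$, which is $\sigma$-(semi)stability in the sense of the definition preceding the proposition. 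By the Hilbert--Mumford criterion — identifying one-parameter subgroups of $G$ with $\Z$-gradings of the $V_i$ and $W_j$ along which the $\psi_{ij}$ are filtered, so that each weight space is a subrepresentation — this numerical condition is in turn equivalent to GIT (semi)stability of $[M]$ for $\OO_R(\chi_\theta)$. This proves (1). (The only care needed is to note that, since $\Delta$ acts trivially on $R$ and $\chi_\theta|_\Delta$ is trivial by the normalisation, the $\hat{G}$- and $G$-semistable loci agree, so passing to $G$ costs nothing.)

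For (2) I would use the standard description of the fibres of a good quotient: $p(r_1) = p(r_2)$ if and only if $\overline{G\cdot r_1}$ and $\overline{G\cdot r_2}$ meet inside $R^{\theta-ss}$, equivalently if and only if they contain a common closed $G$-orbit. Given a $\sigma$-semistable $M$, running its Jordan--H\"older filtration (which exists as recalled before the proposition) and degenerating along the one-parameter subgroup that splits it exhibits $\mathrm{gr}(M)$, a direct sum of $\sigma$-stable representations, inside $\overline{G\cdot [M]}$; conversely such a polystable representation has closed orbit in $R^{\theta-ss}$, since any further degeneration would produce a subrepresentation violating the numerical criterion. By the uniqueness part of Jordan--H\"older, $\mathrm{gr}(M)$ is determined up to isomorphism by $M$, so $\overline{G\cdot r_1} \cap \overline{G\cdot r_2} \cap R^{\theta-ss} \neq \emptyset$ precisely when $\mathrm{gr}(M_1) \cong \mathrm{gr}(M_2)$, i.e. when $M_1$ and $M_2$ are S-equivalent.

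The main obstacle is purely bookkeeping: one must verify that the Hilbert--Mumford weight of the $G$-action at $[M]$ along the one-parameter subgroup attached to a filtration $0 = M^0 \subset \cdots \subset M^\ell = M$ equals a positive multiple of $-\sum_p \theta_\sigma(M^p)$, so that non-negativity of all such weights is equivalent to $\theta_\sigma(M') \le 0$ for all subrepresentations, together with the corresponding refinement in the stable case. This is exactly the computation King performs in the unlabelled setting, and since our reduction is an isomorphism of all the data involved, no new ideas are required; one need only keep track of the labels $H_{ij}$ and the central torus $\Delta$, both of which are routine.
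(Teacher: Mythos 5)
Your proposal is correct and follows the same route as the paper's cited sources: the paper gives no proof of its own, quoting the result from Greb--Ross--Toma Theorem 5.5, whose proof is precisely your reduction of the labelled quiver to an ordinary quiver (via bases of the $H_{ij}$), the verification that $\chi_{\theta}$ descends to $G$ because $\sum_j(\theta_{j1}d_{j1}+\theta_{j2}d_{j2})=0$, King's Hilbert--Mumford computation for part (1), and the closed-orbit/Jordan--H\"older argument for part (2). The only nit is the arrow direction in your unlabelling step (the representation maps go $V_i\otimes H_{ij}\to W_j$, so the ordinary arrows should induce maps from $V_i$ to $W_j$), which is immaterial to the argument.
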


\subsection{The Functorial Approach to Moduli Spaces of Sheaves} \label{section: functorial approach} Let $X$ be a projective scheme, and fix a stability parameter $\sigma = (\underline{L}, \sigma_1, \dots, \sigma_k)$ on $X$. Fix a topological type $\tau$ of coherent sheaves on $X$ defined with respect to $\underline{L}$, and let $P_j(t)$ be the corresponding Hilbert polynomials. Set $H_{ij} = H^0(X, L_i^{-m_1} \otimes L_j^{m_2}) = \mathrm{Hom}(L_j^{-m_2}, L_i^{-m_1})$, where $m_2 > m_1 > 0$ are integers to be determined. Let
$$ T = \bigoplus_{j=1}^k L_j^{-m_1} \oplus L_j^{-m_2}, $$
and consider the algebra
$$ A = L \oplus H \subset \mathrm{End}_X(T), $$
where $L$ is generated by the projection operators onto the summands $L_i^{-m_1}$ and $L_j^{-m_2}$ of $T$ and where $H = \bigoplus_{i, j = 1}^k H_{ij}$. $T$ is a left $A$-module and $H$ is an $L$-bimodule. The category of representations of $\mathcal{Q}$ is equivalent to the category of finite-dimensional, right $A$-modules $M$. From now on we identify these two categories.

Given a coherent sheaf $E$ on $X$, the vector space $\mathrm{Hom}(T,E)$ has a natural $A$-module structure, given by the decomposition
$$ \mathrm{Hom}(T,E) = \bigoplus_{j=1}^k H^0(E \otimes L_j^{m_1}) \oplus H^0(E \otimes L_j^{m_2}) $$
together with the multiplication maps $H^0(E \otimes L_i^{m_1}) \otimes H_{ij} \to H^0(E \otimes L_j^{m_2})$. This defines a functor
$$ \mathrm{Hom}(T,-) : \mathbf{Coh}(X) \to \mathbf{mod}(A). $$
The functor $\mathrm{Hom}(T,-)$ admits a left adjoint, denoted $- \otimes_A T$. If $\mu : H \otimes_L T \to T$ is the left $A$-module structure map, then $M \otimes_A T$ is the cokernel of the map
$$ 1 \otimes \mu - \alpha \otimes 1 : M \otimes_L H \otimes_L T \to M \otimes_L T. $$
If $m_2 \gg m_1$, by \cite[Theorem 3.4]{grebrosstoma} the functor $\mathrm{Hom}(T,-)$ is fully faithful on the full subcategory of $(m_1, \underline{L})$-regular coherent sheaves on $X$ of topological type $\tau$.

Now consider the case where $X$ is projective over a quasi-projective scheme $Y$. Fix a projective morphism $\pi : X \to Y$,\footnote{In Section \ref{section: carrying out the construction} this projective morphism will be taken to be the universal family $\UU I \to I$.} and replace $\underline{L}$ with $\underline{\LL}$, a tuple of $\pi$-very ample invertible sheaves $\LL_1, \dots, \LL_k$. Continue to fix a topological type $\tau$ of $Y$-flat sheaves on $X$ (this time defined with respect to the $\LL_i$); let $P_1(t), \dots, P_k(t)$ be the associated Hilbert polynomials. Replace $H_{ij}$ with the sheaf $\pi_{\ast}(\LL_i^{-m_1} \otimes \LL_j^{m_2})$, so that $A$ is now a sheaf of $\OO_Y$-algebras, acting naturally on $T$.

\begin{defn}
    Let $g : S \to Y$ be a scheme over $Y$. A \emph{flat family of $A$-modules over $S$} consists of a sheaf $\M$ of right modules over the sheaf of algebras $A_S = g^{\ast}A$, which is locally free of finite rank as an $\OO_S$-module.
\end{defn}

If $S$ is a $Y$-scheme and if $\E$ is a coherent sheaf on $X_S = X \times_Y S$, set
$$ \Hom(T, \E) := (\pi_S)_{\ast}(\Hom_{X_S}(T_S, \E)) \in \mathbf{Coh}(S). $$
This is naturally a coherent sheaf of right $A_S$-modules. The functor $- \otimes_A T$ extends to give a left adjoint of $\Hom(T, -)$. 

\begin{remark}
    In Proposition \ref{prop: embedding 1} and in all of the statements that follow, $m_2 - m_1$ can and should be chosen large enough such that each of the sheaves $H_{ij} = \pi_{\ast}(\LL_i^{-m_1} \otimes \LL_j^{m_2})$ are locally free and the formation of the pushforwards all commute with base change.
\end{remark}

\begin{prop}[\cite{grebrosstoma} Propositions 5.8, 5.9, \cite{alvarezconsulking} Propositions 4.1, 4.2 in the relative setting] \label{prop: embedding 1} \label{prop: embedding 2}
    Let $m_1$ be a natural number. Then for $m_2 \gg m_1$, the following holds. 
    \begin{enumerate}
        \item For any $Y$-scheme $S$, the functor $\Hom(T,-)$ is a fully faithful functor from the full subcategory of $S$-flat, $(m_1, \underline{\LL})$-regular coherent sheaves on $X_S$ of topological type $\tau$ to the full subcategory of $\mathbf{mod}(A_S)$ consisting of flat families of $A$-modules over $S$.
        \item If $B$ is any $Y$-scheme and if $\M$ is any flat family of $A$-modules over $B$ of dimension vector
        $$ \underline{d} = (P_1(m_1), P_1(m_2), \dots, P_k(m_1), P_k(m_2)), $$
        then there exists a unique locally closed subscheme $\iota : B_{\tau}^{[reg]} \hookrightarrow B$ with the following properties:
        \begin{enumerate}
            \item $\iota^{\ast} \M \otimes_A T$ is a $B_{\tau}^{[reg]}$-flat, $B_{\tau}^{[reg]}$-finitely presented family of $(m_1, \underline{\LL})$-regular sheaves on $X$ of topological type $\tau$, and the unit map $\iota^{\ast} \M \to \Hom(T, \iota^{\ast} \M \otimes_A T)$ is an isomorphism.
            \item If $g : S \to B$ is such that there exists an $S$-flat, $S$-finitely presented family $E$ of $(m_1, \underline{\LL})$-regular sheaves on $X$ of topological type $\tau$ and an isomorphism $g^{\ast} \M \cong \Hom(T,E)$, then $g$ factors through $\iota : B_{\tau}^{[reg]} \hookrightarrow B$ and $E \cong g^{\ast} \M \otimes_A T$.
        \end{enumerate}
    \end{enumerate}
\end{prop}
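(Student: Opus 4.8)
The plan is to deduce both statements from the absolute case $Y = \spec\C$, treated in \cite{grebrosstoma} (Theorem~3.4 and Propositions~5.8, 5.9) and \cite{alvarezconsulking} (Section~6.5), by working fibrewise over $Y$ and appealing to cohomology and base change. The essential preliminary, which fixes the meaning of ``$m_2 \gg m_1$'', is base-change compatibility. Since the polarisations $\LL_j$ and the topological type $\tau$ are fixed, and $\pi$ is projective over $Y$, which is Noetherian (so that the $(m_1,\underline\LL)$-regular sheaves of type $\tau$ on the geometric fibres of $\pi$, and the fibres themselves, form bounded families), we may choose $m_2$ large enough that simultaneously: (i) each $\LL_i^{-m_1}\otimes\LL_j^{m_2}$ is $\pi$-regular, so that $H_{ij} = \pi_\ast(\LL_i^{-m_1}\otimes\LL_j^{m_2})$ --- and hence $A$, $H$ and $T$ --- is locally free on $Y$ with formation commuting with arbitrary base change; and (ii) whenever $\E$ is an $S$-flat $(m_1,\underline\LL)$-regular sheaf on $X_S$ of type $\tau$, one has $R^{>0}(\pi_S)_\ast(\E\otimes\LL_j^m) = 0$ for all $m \geq m_1$, so that $\Hom(T,\E)$ is locally free on $S$, its formation commutes with base change, and for every point $s\in S$ there are canonical identifications $\Hom(T,\E)\otimes_{\OO_S}\kappa(s) = \Hom(T_s,\E_s)$ and $(\Hom(T,\E)\otimes_A T)|_{X_s} = \Hom(T_s,\E_s)\otimes_{A_s}T_s$ --- the latter because $-\otimes_A T$ and restriction to a fibre are right exact and, by (i), the sheaves $\Hom(T,\E)\otimes_L T$ and $\Hom(T,\E)\otimes_L H\otimes_L T$ restrict compatibly to the fibres. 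Carrying out this bookkeeping, together with verifying that the threshold for $m_2$ can be taken uniformly in $S$, is where I expect the genuine content of the ``minor modifications'' of \emph{loc. cit.} to lie; the rest is formal once it is in place.

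Granting (i) and (ii), part~(1) goes exactly as in \cite{grebrosstoma}: by (ii), $\Hom(T,-)$ does land in flat families of $A_S$-modules. The adjunction $(-\otimes_A T) \dashv \Hom(T,-)$ identifies $\mathrm{Hom}_{A_S}(\Hom(T,\E),\Hom(T,\E'))$ with $\mathrm{Hom}_{X_S}(\Hom(T,\E)\otimes_A T,\E')$, and under this identification the natural map out of $\mathrm{Hom}_{X_S}(\E,\E')$ becomes precomposition with the counit $c_\E : \Hom(T,\E)\otimes_A T \to \E$ (by naturality of the counit). Hence full faithfulness of $\Hom(T,-)$ amounts to the assertion that $c_\E$ is an isomorphism for every $S$-flat $(m_1,\underline\LL)$-regular $\E$ of type $\tau$, which I would check fibrewise: by (i) and (ii), $c_\E$ restricts over $X_s$ to the absolute counit $\Hom(T_s,\E_s)\otimes_{A_s}T_s \to \E_s$, an isomorphism by the absolute case. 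Tensoring $0 \to \ker c_\E \to \Hom(T,\E)\otimes_A T \xrightarrow{c_\E} \E \to 0$ with $\kappa(s)$ and using $\operatorname{Tor}_1^{\OO_S}(\E,\kappa(s)) = 0$ (as $\E$ is $S$-flat), we find that $\coker c_\E$ and then $\ker c_\E$ have vanishing fibres at every $s$, hence vanish by Nakayama; so $c_\E$ is an isomorphism.

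For part~(2), put $\F := \M\otimes_A T \in \mathbf{Coh}(X_B)$; since $-\otimes_A T$ and restriction to a fibre are right exact and, by (i), the sheaves involved restrict compatibly, one has $\F|_{X_b} = \M_b\otimes_{A_b}T_b$ for every $b\in B$. I would build $B_\tau^{[reg]}$ in two steps. First, the flattening stratification of $\F$ over the proper morphism $X_B\to B$ supplies a locally closed subscheme $B_0\hookrightarrow B$, universal among morphisms over which $\F$ pulls back to a flat family with fibres of Hilbert polynomials $(P_1,\dots,P_k)$, i.e.\ of topological type $\tau$. Second, as $\F|_{X_{B_0}}$ is now $B_0$-flat, semicontinuity of the cohomology of the fibres shows that the locus where $\F_b$ is in addition $(m_1,\underline\LL)$-regular is an open subscheme $B_\tau^{[reg]}\subseteq B_0$; let $\iota : B_\tau^{[reg]}\hookrightarrow B$ be the composite immersion. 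By construction $\iota^\ast\F = \iota^\ast\M\otimes_A T$ is $B_\tau^{[reg]}$-flat with $(m_1,\underline\LL)$-regular fibres of type $\tau$. The unit $\iota^\ast\M \to \Hom(T,\iota^\ast\F)$ is an isomorphism, again by reduction to fibres: the absolute statement (\cite{grebrosstoma} Proposition~5.9, \cite{alvarezconsulking}) says that $(m_1,\underline\LL)$-regularity of $\M_b\otimes_{A_b}T_b$ forces $\M_b \xrightarrow{\sim} \Hom(T_b,\M_b\otimes_{A_b}T_b)$, and both $\iota^\ast\M$ and $\Hom(T,\iota^\ast\F)$ are locally free on $B_\tau^{[reg]}$ --- the latter by (ii) applied to the flat family $\iota^\ast\F$ --- so a fibrewise isomorphism between them is an isomorphism. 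This gives property~(a).

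For the universal property~(b), let $g : S\to B$ with $g^\ast\M\cong\Hom(T,\E)$ for some $S$-flat $(m_1,\underline\LL)$-regular $\E$ of type $\tau$. Then $g^\ast\F = g^\ast\M\otimes_A T \cong \Hom(T,\E)\otimes_A T \cong \E$ by part~(1), so $g^\ast\F$ is $S$-flat of topological type $\tau$ with $(m_1,\underline\LL)$-regular fibres; the universal property of the flattening stratification makes $g$ factor through $B_0$, and since the fibres of $g^\ast\F$ are regular, $g$ then factors through the open subscheme $B_\tau^{[reg]}\subseteq B_0$, with $g^\ast(\M\otimes_A T) = g^\ast\F \cong \E$. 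Uniqueness of $B_\tau^{[reg]}$ is then formal: any locally closed subscheme $B'\hookrightarrow B$ with properties (a) and (b) satisfies the hypotheses of (b) for its own inclusion, hence factors through $B_\tau^{[reg]}$, and conversely by symmetry, so $B' = B_\tau^{[reg]}$ inside $B$. If $B$ is not locally Noetherian one reduces to that case by a standard limit argument, or simply takes (b) as the definition of $B_\tau^{[reg]}$.
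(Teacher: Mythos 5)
Your overall strategy --- reduce to the absolute statements of \cite{grebrosstoma} and \cite{alvarezconsulking} by first arranging base-change compatibility for the sheaves $H_{ij}$, $T$ and $\Hom(T,\E)$, and then checking the counit and unit maps fibrewise --- is exactly the ``minor modification'' the paper has in mind, and your treatment of part (1) and of the universal property in part (2) is sound. There is, however, a genuine gap in your construction of $B_{\tau}^{[reg]}$: you take only the flattening stratum of $\F = \M\otimes_A T$ with the correct Hilbert polynomials, intersected with the open locus where the fibres are $(m_1,\underline{\LL})$-regular, and you assert that the unit $\iota^{\ast}\M\to\Hom(T,\iota^{\ast}\M\otimes_A T)$ is then automatically an isomorphism ``by the absolute statement''. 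The absolute statements do not assert this (they assert that the \emph{counit} is an isomorphism on regular sheaves), and the implication is in fact false. Concretely, take $k=1$, $X=\PP^1$, $\LL=\OO(1)$, $E=\OO$, so $V=H^0(\OO(m_1))$ and $W=H^0(\OO(m_2))$. Let $V'=\langle x^{m_1},y^{m_1}\rangle\subset V$; since $V'\cdot H^0(\OO(m_2-m_1-1))=H^0(\OO(m_2-1))$ once $m_2 - m_1 > m_1$, the submodule $I=(V',W)\subset\Hom(T,\OO)$ still satisfies $I\otimes_A T\cong\OO$. Now set $M=I\oplus C$, where $C$ has $C_v=\C^{m_1-1}$, $C_w=0$ and zero structure maps. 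Then $M$ has dimension vector $\underline{d}$, and $C\otimes_A T=0$ because global generation of $\OO(m_2-m_1)$ makes $1\otimes\mu$ surjective; hence $M\otimes_A T\cong\OO$ is $(m_1,\underline{L})$-regular of type $\tau$, yet the unit of $M$ kills the nonzero summand $C$ (it factors through $\Hom(T,C\otimes_A T)=0$) and so is not injective. Thus over the subscheme you construct, property (a) can fail, and with it the uniqueness statement.

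The repair is cheap and is what both source papers actually do: on your stratum both $\iota^{\ast}\M$ and $\Hom(T,\iota^{\ast}\F)$ are locally free of the same rank, so the locus where the unit is an isomorphism is a further \emph{open} condition, and the correct $B_{\tau}^{[reg]}$ is that open subscheme. Your verification of the universal property (b) then still goes through: for $g^{\ast}\M\cong\Hom(T,\E)$ the unit is an isomorphism by the triangle identity $\Hom(T,\epsilon_{\E})\circ\eta_{\Hom(T,\E)}=\mathrm{id}$ together with the fact, established in your part (1), that the counit $\epsilon_{\E}$ is an isomorphism; and uniqueness is the formal argument you give. The rest of the write-up (in particular the two-step Nakayama argument for the counit, modulo the cosmetic slip of writing a short exact sequence before surjectivity has been proved) is fine.
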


\begin{proof}
	This follows from making minor modifications to the proofs of \cite[Propositions 5.8 and 5.9]{grebrosstoma}, these modifications being analogous to those indicated in \cite[Section 6.5]{alvarezconsulking}.
\end{proof}

\subsection{Comparison of Semistability - Positive Case} \label{section: comparison of semistability}

Given a coherent sheaf $E$ of topological type $\tau$ on a projective scheme $X$, we have a notion of multi-Gieseker stability for the sheaf $E$ and a notion of stability for the $A$-module $\mathrm{Hom}(T,E)$, both with respect to a common stability parameter $\sigma = (\underline{L}, \sigma_1, \dots, \sigma_k)$. Provided $\sigma$ is bounded, the following result relates these two notions.

\begin{theorem}[\cite{alvarezconsulking} Section 5, \cite{grebrosstoma} Theorem 8.1] \label{thm: comparison theorem}
	Let $\pi : X \to Y$ be a projective morphism, where $Y$ is a quasi-projective scheme. Let $\underline{\LL} = (\LL_1, \dots, \LL_k)$ be a tuple of $\pi$-very ample invertible sheaves and let $\tau$ be a topological type defined with respect to $\underline{\LL}$. Let $\sigma = (\underline{\LL}, \sigma_1, \dots, \sigma_k)$ be a stability parameter which is bounded with respect to $\tau$. Then for integers $m_2 \gg m_1 \gg m_0 \gg 0$, the following holds: let $\E$ be a $Y$-flat coherent sheaf on $X$ and let $y \in Y$ be a point. Then:
	\begin{enumerate}
		\item The sheaf $\E_y = \E \times_Y \spec \C$ is multi-Gieseker (semi)stable with respect to $\sigma$ if and only if $\E_y$ is pure, $(m_0, \underline{\LL}_y)$-regular and if $\mathrm{Hom}(T_y,\E_y)$ is a $\sigma$-(semi)stable $A$-module.
		\item Suppose in addition that $\sigma$ is positive, and suppose $\E_y$ is $\sigma$-semistable. Then
		$$ \mathrm{Hom}(T_y, \mathrm{gr}_{\sigma}(\E_y)) \cong \mathrm{gr}_{\sigma}(\mathrm{Hom}(T_y,\E_y)), $$
		where $\mathrm{gr}_{\sigma}$ denotes the associated graded object arising from a Jordan-H\"older filtration of $\E_y$ or of $\mathrm{Hom}(T_y,\E_y)$ respectively. In particular, $\mathrm{Hom}(T_y,-)$ preserves S-equivalence with respect to $\sigma$.
	\end{enumerate}
\end{theorem}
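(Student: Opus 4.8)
The statement is fibrewise: fixing $y \in Y$, the fibre $X_y$ is a projective $\C$-scheme, the $\LL_i$ restrict to very ample $\LL_{i,y}$, and by the remark following Proposition \ref{prop: embedding 1} we may assume (after enlarging $m_2-m_1$) that $T_y$, $A_y$, and $\mathrm{Hom}(T_y,-)$ are precisely the absolute constructions of Section \ref{section: functorial approach} for $X_y$. So the plan is to reduce everything to the absolute comparison theorems of \cite{alvarezconsulking} and \cite{grebrosstoma} (Theorem 8.1 of \emph{loc. cit.}), the only genuinely new point being that the thresholds $m_2 \gg m_1 \gg m_0 \gg 0$ may be chosen \emph{once and for all}, uniformly in $y$ and in the various auxiliary sheaves entering the argument. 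Concretely: (i) record the absolute dictionary between subsheaves of $\E_y$ and subrepresentations of $\mathrm{Hom}(T_y,\E_y)$; (ii) use the boundedness hypothesis on $\sigma$ to make all thresholds uniform over $Y$; (iii) deduce (1); and (iv) deduce (2) by running the same dictionary on a Jordan--H\"older filtration, using positivity to get exactness.

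\textbf{The dictionary and choice of constants; proof of (1).} Since $\sigma$ is bounded with respect to $\tau$, the $\sigma$-semistable sheaves of topological type $\tau$ on fibres of $\pi$ form a bounded family; so do their saturated subsheaves and quotient sheaves of each fixed Hilbert polynomial, as these are parametrised by relative Quot schemes over a bounded family. By uniform Castelnuovo--Mumford regularity on bounded families there is an $m_0$ making \emph{all} of these sheaves $(m_0,\underline{\LL}_y)$-regular for every $y$; one then picks $m_1 \gg m_0$ and $m_2 \gg m_1$ so that Proposition \ref{prop: embedding 1} applies on all fibres and so that the inequality bookkeeping of \cite{alvarezconsulking} Section 5 is valid, i.e.\ for a subsheaf $F \subset \E_y$ the value $\theta_{\sigma}(\mathrm{Hom}(T_y,F))$ has the same sign as $p_{F}^{\sigma} - p_{\E_y}^{\sigma}$, and conversely a subrepresentation $N \subset \mathrm{Hom}(T_y,\E_y)$ gives, via the counit $N \otimes_{A_y} T_y \to \E_y$, a subsheaf whose saturation $F$ satisfies $N \subseteq \mathrm{Hom}(T_y,F)$ with $F$ again $(m_0,\underline{\LL}_y)$-regular. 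Part (1) is then immediate in both directions: if $\E_y$ is $\sigma$-(semi)stable it is pure and $(m_0,\underline{\LL}_y)$-regular, and every proper subrepresentation sits inside some $\mathrm{Hom}(T_y,F)$ with $F$ proper saturated, so the sheaf inequality $p_F^{\sigma} < (\leq)\, p_{\E_y}^{\sigma}$ forces $\theta_{\sigma} < (\leq)\, 0$; conversely, purity, $(m_0,\underline{\LL}_y)$-regularity and $A$-module (semi)stability turn each proper saturated $F \subset \E_y$ into a proper subrepresentation $\mathrm{Hom}(T_y,F)$ with $\theta_{\sigma} < (\leq)\, 0$, hence $p_F^{\sigma} < (\leq)\, p_{\E_y}^{\sigma}$, and arbitrary subsheaves are dominated by their saturations.

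\textbf{Proof of (2).} Assume $\sigma$ positive and $\E_y$ $\sigma$-semistable, and fix a Jordan--H\"older filtration $0 = E_0 \subset E_1 \subset \dots \subset E_\ell = \E_y$ with $\sigma$-stable quotients $G_i = E_i/E_{i-1}$, all of reduced multi-Hilbert polynomial $p_{\E_y}^{\sigma}$. All $E_i$ and $G_i$ are $\sigma$-semistable, hence lie in the bounded family above and are $(m_0,\underline{\LL}_y)$-regular. Because $\sigma$ is positive, $\mathrm{Hom}(T_y,-)$ involves the summands $H^0(- \otimes \LL_{j,y}^{m_1})$ and $H^0(- \otimes \LL_{j,y}^{m_2})$ for \emph{all} $j$, and on $(m_0,\underline{\LL}_y)$-regular sheaves all higher cohomology of these twists vanishes, so $\mathrm{Hom}(T_y,-)$ is exact on the subcategory generated by the $E_i$. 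Applying it to the filtration yields a filtration of $\mathrm{Hom}(T_y,\E_y)$ by subrepresentations with successive quotients $\mathrm{Hom}(T_y,G_i)$, each $\sigma$-stable as an $A$-module by (1) and all of the same slope; this is a Jordan--H\"older filtration of $\mathrm{Hom}(T_y,\E_y)$. By uniqueness of the associated graded, $\mathrm{gr}(\mathrm{Hom}(T_y,\E_y)) \cong \bigoplus_i \mathrm{Hom}(T_y,G_i) \cong \mathrm{Hom}(T_y,\mathrm{gr}(\E_y))$, and preservation of S-equivalence follows at once.

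\textbf{Main obstacle.} The real difficulty is step (ii): producing a single chain $m_2 \gg m_1 \gg m_0 \gg 0$ that simultaneously makes $\mathrm{Hom}(T_y,-)$ fully faithful over every fibre, renders regular all saturated destabilising subsheaves, their quotients, and all Jordan--H\"older factors (which needs boundedness of these auxiliary families, not just of the $\sigma$-semistable sheaves), and keeps the polynomial-versus-$\theta_\sigma$ inequality bookkeeping of \cite{alvarezconsulking} uniformly valid with all lower-order error terms absorbed. That bookkeeping is the bulk of \emph{loc.\ cit.}; here one only has to check it survives the passage to a projective morphism $\pi : X \to Y$, which it does because $Y$ is quasi-projective and boundedness is a property of the total family over $Y$. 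Finally, positivity is genuinely used in (2): if some $\sigma_i = 0$ the functor ignores $\LL_{i,y}$ and is no longer exact on the relevant subcategory, so the associated graded can change.
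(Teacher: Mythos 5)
The paper offers no proof of this theorem at all: it is stated as a direct quotation of \cite{alvarezconsulking} Section 5 and \cite{grebrosstoma} Theorem 8.1, with the relative (fibrewise over $Y$) formulation absorbed into the uniform choice of $m_2 \gg m_1 \gg m_0$, exactly as in the remark following Proposition \ref{prop: embedding 1}. Your proposal is a faithful reconstruction of that cited argument — the subsheaf/subrepresentation dictionary for (1), uniformity of thresholds via boundedness of $\sigma$-semistable sheaves and their saturated destabilisers, and exactness of $\mathrm{Hom}(T_y,-)$ on $(m_0,\underline{\LL}_y)$-regular sheaves for (2) — so at the level of strategy it is correct and consistent with what the paper intends.

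Two points deserve correction. First, your closing explanation of why positivity is needed in (2) is wrong: the functor $\mathrm{Hom}(T_y,-)$ is defined independently of $\sigma$, always uses every $\LL_{j,y}$, and is exact on $(m_0,\underline{\LL}_y)$-regular sheaves whether or not some $\sigma_i=0$. The genuine failure in the degenerate case occurs exactly at the step in your proof of (2) where you invoke (1) to conclude that each $\mathrm{Hom}(T_y,G_i)$ is a \emph{stable} $A$-module: when $\sigma_i=0$ the weight $\theta_{i1}=\theta_{i2}=0$, so any subrepresentation concentrated at the zero-weight vertices has $\theta_\sigma=0$, the factor modules are at best semistable, the module-level Jordan--H\"older filtration is strictly finer than the image of the sheaf-level one, and the associated gradeds differ. (This is precisely why the paper handles degenerate $\sigma$ by passing to the truncated quiver $\mathcal{Q}'$ in Section \ref{section: GRT construction degenerate}.) Second, your appeal to ``(1)'' for the stability of $\mathrm{Hom}(T_y,G_i)$ is slightly too quick: the Jordan--H\"older factors $G_i$ do not have topological type $\tau$, so one needs the version of the comparison for the (finitely many, bounded) topological types of subquotients appearing in such filtrations, with the thresholds $m_i$ chosen uniformly for all of them; this is part of the bookkeeping in \cite{grebrosstoma} and should be folded into your choice of constants in step (ii).
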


\subsection{The GIT Construction - Positive Case} \label{section: GRT construction}

We continue to fix the projective morphism $\pi : X \to Y$, the $\pi$-very ample invertible sheaves $\LL_1, \dots, \LL_k$ and the topological type $\tau$ of $Y$-flat coherent sheaves on $X$. Choose natural numbers $m_0, m_1$ and $m_2$ such that Proposition \ref{prop: embedding 1} and Theorem \ref{thm: comparison theorem} apply. Set the dimension vector to be
$$ \underline{d} = (P_1(m_1), P_1(m_2), \dots, P_k(m_1), P_k(m_2)). $$

We introduce the following notation: if $\E$ and $\F$ are coherent sheaves on $Y$ with $\F$ locally free, let $\underline{\mathrm{Hom}}_Y(\E,\F)$ denote the (linear) scheme representing the functor assigning to a $Y$-scheme $g : S \to Y$ the set $\mathrm{Hom}_{S}(g^{\ast}\E, g^{\ast}\F)$; such a scheme exists and is affine over $Y$ by \cite[Tag 08JY]{stacks-project}. With $H_{ij} = \pi_{\ast}(\LL_i^{-m_1} \otimes \LL_j^{m_2})$, form the $Y$-scheme
$$ R = \prod_{i,j=1}^k \underline{\mathrm{Hom}}_Y(\OO_Y^{d_{i1}} \otimes H_{ij}, \OO_Y^{d_{j2}}). $$
Let $p : R \to Y$ denote the structure morphism. The scheme $R$ carries a tautological family $\mathbb{M}$ of $A$-modules. By Proposition \ref{prop: embedding 2}, there exists a locally closed subscheme
$$ \iota_0 : R_{\tau}^{[reg]} \hookrightarrow R $$
parametrising the modules which appear in the image of the $\Hom(T,-)$-functor on the full subcategory of all $(m_1, \underline{\LL})$-regular coherent sheaves of topological type $\tau$. Then $\iota_0^{\ast} \mathbb{M} \otimes_A T$ is a $R_{\tau}^{[reg]}$-flat family of $(m_1, \underline{\LL})$-regular sheaves of topological type $\tau$. Let
$$ \iota : Q \hookrightarrow R_{\tau}^{[reg]} $$
denote the open subscheme parametrising those sheaves which are also $(m_0, \underline{\LL})$-regular, and let $\mathbb{F} = \iota^{\ast} \mathbb{M} \otimes_A T$. Let
$$ Q^{[\sigma-ss]} \subset Q $$
denote the open subscheme where the fibres of $\mathbb{F}$ are $\sigma$-semistable. Under the natural fibrewise action of $G \times Y$ on $R/Y$, where $G = \prod_{j=1}^k (GL_{d_{j1}}(\C) \times GL_{d_{j2}}(\C)) / \Delta$, the subschemes $Q$ and $Q^{[\sigma-ss]}$ are invariant. Endow $R$ with the linearisation $\OO_R(\chi_{\theta})$, where $\theta = c\theta_{\sigma}$ for some positive integer $c$ chosen so that $\theta$ is integral, and let $R^{\sigma-ss}$ be the corresponding relative GIT semistable locus (cf. Definition \ref{defn: relative semistable locus}). Let $\overline{Q^{[\sigma-ss]}}$ be the closure of $Q^{[\sigma-ss]}$ in $R$, and set
$$ (\overline{Q^{[\sigma-ss]}})^{\sigma-ss} := \overline{Q^{[\sigma-ss]}} \cap R^{\sigma-ss}. $$

\begin{theorem}[\cite{alvarezconsulking} Section 6, \cite{grebrosstoma} Section 9] \label{thm: relative moduli spaces}
	Suppose $\sigma$ is a positive stability parameter which is bounded with respect to $\tau$. Then there is an equality of schemes $(\overline{Q^{[\sigma-ss]}})^{\sigma-ss} := \overline{Q^{[\sigma-ss]}} \cap R^{\sigma-ss} = Q^{[\sigma-ss]}$, and there exists a good quotient $q : Q^{[\sigma-ss]} \to M_{\sigma, \tau}$ of $Y$-schemes for the action of $G$ on $Q^{[\sigma-ss]}$, where
	$$M_{\sigma, \tau} = \overline{Q^{[\sigma-ss]}} \sslash_{\theta_{\sigma}} G = Q^{[\sigma-ss]} \sslash G = \mathbf{Proj}_Y \left( \bigoplus_{m=0}^{\infty} (p_{\ast} (\OO_R(\chi_{\theta_{\sigma}}^m)|_{ \overline{Q^{[\sigma-ss]}}}))^G \right). $$
	The scheme $M_{\sigma, \tau}$ is projective over $Y$. Moreover, the closed points of $M_{\sigma, \tau}$ are in 1-1 correspondence with S-equivalence classes of $\sigma$-semistable sheaves over geometric fibres of $\pi : X \to Y$.
\end{theorem}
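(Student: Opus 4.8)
The plan is to transport the problem to King's GIT for representations of the labelled quiver $\mathcal{Q}$ through the embedding functor $\Hom(T,-)$, and then to descend the picture along $\pi : X \to Y$. Since $p : R \to Y$ is affine and the linearisation $\OO_R(\chi_{\theta_{\sigma}})$ is the trivial bundle twisted by a character, Lemma \ref{lem: technical git over base result} applies to the closed $G$-invariant subscheme $\overline{Q^{[\sigma-ss]}} \subset R$: it identifies the good moduli space of $[\,(\overline{Q^{[\sigma-ss]}} \cap R^{\sigma-ss})/G\,]$ with $\mathbf{Proj}_Y$ of the associated invariant section ring, and by \eqref{eqn: fibrewise semistable loci} it shows that $\overline{Q^{[\sigma-ss]}} \cap R^{\sigma-ss}$ meets each geometric fibre over $y \in Y$ in the \emph{absolute} GIT-semistable locus of $(\overline{Q^{[\sigma-ss]}})_y$ for the induced $G$-action. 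Thus the construction, the displayed formula for $M_{\sigma,\tau}$, and the map $q$ all reduce to the absolute (fibrewise) statement, which is the content of \cite{grebrosstoma} Section 9 / \cite{alvarezconsulking} Section 6; I would nevertheless re-derive the two genuinely substantive points, namely the equality $\overline{Q^{[\sigma-ss]}} \cap R^{\sigma-ss} = Q^{[\sigma-ss]}$ and the description of the closed points.

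For the equality it suffices, by the fibrewise identity above, to show that for each geometric point $y \in Y$ one has $(\overline{Q^{[\sigma-ss]}})_y \cap R_y^{ss} = Q^{[\sigma-ss]}_y$. The inclusion $\supseteq$ is immediate: a point of $Q^{[\sigma-ss]}_y$ corresponds, via the unit isomorphism, to an $A_y$-module $\Hom(T_y,\E_y)$ with $\E_y$ an $(m_0,\underline{\LL}_y)$-regular, $\sigma$-semistable sheaf of type $\tau$, which by Theorem \ref{thm: comparison theorem}(1) is a $\sigma$-semistable $\mathcal{Q}$-representation, hence GIT-semistable for $\OO(\chi_{\theta_{\sigma}})$ by Proposition \ref{prop: quiver GIT}(1). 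For $\subseteq$ one shows that no point of $(\overline{Q^{[\sigma-ss]}})_y \setminus Q^{[\sigma-ss]}_y$ is GIT-semistable, distinguishing: (i) points of $Q_y \setminus Q^{[\sigma-ss]}_y$, whose associated sheaf is $(m_0,\underline{\LL}_y)$-regular but not $\sigma$-semistable and which are therefore excluded by Theorem \ref{thm: comparison theorem}(1) together with Proposition \ref{prop: quiver GIT}(1); and (ii) the genuine boundary points — limits of points of $Q^{[\sigma-ss]}_y$ whose limiting $A_y$-module is not of the form $\Hom(T_y,\E_y)$ for an $(m_0,\underline{\LL}_y)$-regular sheaf — for which one must exhibit a destabilising one-parameter subgroup by Hilbert--Mumford. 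This is where boundedness of $\sigma$ with respect to $\tau$ is indispensable: it allows a single triple $m_2 \gg m_1 \gg m_0$ to work for the whole family of $\sigma$-semistable sheaves of type $\tau$, so that any such limiting module either carries a subrepresentation generated by its degree-$m_1$ part which violates $\theta_{\sigma}(M') \le 0$ (arising from a destabilising subsheaf of a flat limit, via upper semicontinuity of $h^{0}$) or fails to be generated in degree $m_1$, and in either case the corresponding coordinate one-parameter subgroup destabilises. I expect step (ii) to be the main obstacle; its proof is precisely the weight computation and boundedness argument of \cite{alvarezconsulking} and \cite{grebrosstoma}.

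Granting $\overline{Q^{[\sigma-ss]}} \cap R^{\sigma-ss} = Q^{[\sigma-ss]}$, Lemma \ref{lem: technical git over base result} identifies $\mathbf{Proj}_Y\bigl(\bigoplus_{m \ge 0}(p_{\ast}(\OO_R(\chi_{\theta_{\sigma}}^{m})|_{\overline{Q^{[\sigma-ss]}}}))^{G}\bigr)$ with the good moduli space of $[\,Q^{[\sigma-ss]}/G\,]$, hence with the good quotient $Q^{[\sigma-ss]} \sslash G$ by Proposition \ref{prop: good moduli spaces and good quotients}; by construction this also equals $\overline{Q^{[\sigma-ss]}} \sslash_{\theta_{\sigma}} G$. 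Finite generation of the invariant ring over $\OO_Y$ (reductivity of $G$, Noetherianity of $Y$) then makes $M_{\sigma,\tau}$ projective over $Y$. Finally, the closed points of $M_{\sigma,\tau}$ correspond to closed $G$-orbits in $Q^{[\sigma-ss]}$, which by Proposition \ref{prop: quiver GIT}(2) are in bijection with S-equivalence classes of the $\sigma$-semistable $A$-modules occurring in $Q^{[\sigma-ss]}$; by Theorem \ref{thm: comparison theorem}(2) the functor $\Hom(T_y,-)$ sends the Jordan--H\"older graded of a $\sigma$-semistable sheaf to that of its module, so it induces a bijection between S-equivalence classes of $\sigma$-semistable sheaves of type $\tau$ on the geometric fibres $X_y$ and these S-equivalence classes, which is the asserted correspondence.
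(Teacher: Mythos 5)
Your proposal is correct and follows essentially the same route as the paper: reduce the relative statement to the absolute case over a point via Lemma \ref{lem: technical git over base result} and the fibrewise identification of semistable loci, then invoke the results of \cite{grebrosstoma} Section 9 (Theorems 9.4, 9.6, 10.1) and \cite{alvarezconsulking} for the equality of semistable loci, the quotient, and the description of closed points. The extra detail you give on the absolute-case argument (the two inclusions and the S-equivalence correspondence) is a faithful unpacking of those cited results rather than a different method.
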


\begin{proof}
	Suppose first $Y = \spec \C$ is a point (that is, we are working with a fixed fibre over some geometric point $y \in Y$). That there is an equality $\overline{Q^{[\sigma-ss]}} \cap R^{\sigma-ss} = Q^{[\sigma-ss]}$, and that there is a good quotient $M_{\sigma, \tau}$ of $Q^{[\sigma-ss]}$ given by
	$$ M_{\sigma, \tau} = \mathrm{Proj} \left( \bigoplus_{m=0}^{\infty} H^0(\overline{Q^{[\sigma-ss]}}, \OO_R(\chi_{\theta_{\sigma}}^m)|_{ \overline{Q^{[\sigma-ss]}}})^G \right), $$
	both follow from \cite[Theorem 10.1]{grebrosstoma}. The projectivity (over $\spec \C$) of $M_{\sigma, \tau}$ is Theorem 9.6 of \emph{loc. cit.} (which in turn is proved in the same way as \cite[Proposition 6.6]{alvarezconsulking}, via establishing that the valuative criterion for properness holds), and the identification of the closed points of $M_{\sigma, \tau}$ with S-equivalence classes of semistable sheaves is \cite[Theorem 9.4]{grebrosstoma}. This yields the result in the fibrewise setting.
	
	In order to pass from the fibrewise setting to working over the quasi-projective scheme $Y$, we apply Lemma \ref{lem: technical git over base result}. Since the points of the relative semistable locus $R^{\sigma-ss} = R^{G-ss}(\chi_{\theta}/Y)$ are determined by the fibrewise semistable loci, we have an equality $\overline{Q^{[\sigma-ss]}} \cap R^{\sigma-ss} = Q^{[\sigma-ss]}$. We also have the existence of a good quotient $M_{\sigma, \tau}$ of $Q^{[\sigma-ss]}$, given by the relative projective spectrum
	$$ Q^{[\sigma-ss]} \sslash G = \mathbf{Proj}_Y \left( \bigoplus_{m=0}^{\infty} (p_{\ast} (\OO_R(\chi_{\theta_{\sigma}}^m)|_{ \overline{Q^{[\sigma-ss]}}}))^G \right). $$ 
	The description of the closed points of $M_{\sigma, \tau}$ is immediate from the fibrewise case. That $M_{\sigma, \tau}$ is proper, and thus projective, over $Y$ follows from the same valuative criterion argument used in the proof of \cite[Proposition 6.6]{alvarezconsulking}, which carries over to the relative case, exactly as in Section 6.5 of \emph{loc. cit.}.
\end{proof}	

\begin{remark}
	For a description of the moduli functor that the scheme $M_{\sigma, \tau}$ corepresents (at least in the case $Y = \spec \C$), see \cite[Section 9]{grebrosstoma} and \cite[Section 6.5]{alvarezconsulking}. The fibre of $M_{\sigma, \tau}$ over a point $y \in Y$ is exactly the moduli space of $\sigma$-semistable coherent sheaves on the fibre $X_y$, as constructed in \cite{grebrosstoma}.
\end{remark}

\subsection{The GIT Construction - Degenerate Case} \label{section: comparison of semistability degenerate} \label{section: GRT construction degenerate}

Let $\pi : X \to Y$, $\LL_1, \dots, \LL_k$ and $\tau$ be as in Section \ref{section: GRT construction}, and fix a bounded stability parameter $\sigma = (\underline{\LL}, \sigma_1, \dots, \sigma_k)$. Suppose instead that $\sigma$ is degenerate, that is some $\sigma_i = 0$. In this case, the second conclusion of Theorem \ref{thm: comparison theorem} no-longer applies. Instead we proceed as follows.

For integers $m_2 \gg m_1 \gg m_0 \gg 0$, the result of Proposition \ref{prop: embedding 1} and the first conclusion of Theorem \ref{thm: comparison theorem} still applies to $Y$-flat coherent sheaves on $X$ of topological type $\tau$. Fix such integers $m_i$. Relabelling indices if necessary, we may assume there exists a positive integer $k' < k$ with the property that $\sigma_j > 0$ for all $j \leq k'$ and $\sigma_j = 0$ for all $j > k'$. Let $\underline{\LL}' = (\LL_1, \dots, \LL_{k'})$ and $\sigma' = (\underline{\LL}', \sigma_1, \dots, \sigma_{k'})$. Then $\sigma'$ is a positive stability parameter, though with respect to a proper subset of the line bundles $\LL_1, \dots, \LL_k$. 

Form a subquiver $\mathcal{Q}'$ of the original quiver $\mathcal{Q}$ by taking the full subquiver with vertices $\mathcal{Q}_0' = \{v_i, w_j : i, j = 1, \dots, k'\}$. Let $T', A', \underline{d}', R', G'$ and $Q'$ be the objects associated with the quiver $\mathcal{Q}'$ (with $R'$ and $G'$ considered as schemes over $Y$), so that if $E$ is a $(m_1, \underline{\LL}')$-regular sheaf on $X$ of topological type $\tau$ then $\Hom(T', E)$ is a flat family of $A'$-modules of dimension vector $\underline{d}'$. After possibly increasing the $m_i$, we may also assume that Proposition \ref{prop: embedding 1} and Theorem \ref{thm: comparison theorem} apply to the positive stability parameter $\sigma'$.

The inclusion $\mathcal{Q}' \subset \mathcal{Q}$ gives rise to a projection
$$ \phi' : R \to R'. $$
Continue to set
$$ \theta_{j1} = \frac{\sigma_j}{\sum_i \sigma_i d_{i1}}, \quad \theta_{j2} = - \frac{\sigma_j}{\sum_i \sigma_i d_{i2}} $$
for all $j = 1, \dots, k$. The group $G$ acts on $R$ as $G' \times G''$, where $G''$ corresponds to the rows $j = k'+1, \dots, k$ of $\mathcal{Q}$. Letting $G''$ act on $R'$ trivially, the projection $\phi' : R \to R'$ is $G$-equivariant. Consider the subschemes
$$ D = \overline{Q^{[\sigma-ss]}} \subset R, \quad D^{\sigma-ss} = D \cap R^{\sigma-ss}, $$
where $Q^{[\sigma-ss]} \subset R_{\tau}^{[reg]}$ is as defined in Section \ref{section: GRT construction}, as well as the subschemes
$$ D' = \overline{(Q')^{[\sigma'-ss]}} \subset R', \quad (D')^{\sigma'-ss} = D' \cap (R')^{\sigma'-ss}. $$

\begin{prop}[\cite{grebrosstomamaster} Proposition 2.3] \label{prop: degenerate loci comparison}
	The projection $\phi' : R \to R'$ induces a $G'$-equivariant, $G''$-invariant map $D^{\sigma-ss} \to (D')^{\sigma'-ss}$, and the induced map $\hat{\phi}' : D^{\sigma-ss} \sslash G'' \to (D')^{\sigma'-ss} = (Q')^{[\sigma'-ss]}$ is a $G'$-equivariant isomorphism.
\end{prop}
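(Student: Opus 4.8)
The plan is to exhibit $\phi'$ as a morphism which, on the regular semistable loci, is a principal $G''$-bundle, and then to check that passing to the closures and to the $G''$-quotient recovers the target exactly.

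\textbf{Step 1: product structure and comparison of semistable loci.} Writing $\hat{G}=\hat{G}'\times\hat{G}''$ with $\hat{G}''=\prod_{j=k'+1}^{k}(GL_{d_{j1}}(\C)\times GL_{d_{j2}}(\C))$, one has $G\cong G'\times G''$ (since $\Delta\cap\hat{G}''=\{1\}$ and $\Delta\cap\hat{G}'=\Delta'$), and correspondingly $R=R'\times_Y R''$, where $R''=\prod_{(i,j)\notin\{1,\dots,k'\}^2}\underline{\mathrm{Hom}}_Y(\OO_Y^{d_{i1}}\otimes H_{ij},\OO_Y^{d_{j2}})$ is a vector bundle over $Y$ and $\phi'$ is the projection $\mathrm{pr}_1$; thus $\phi'$ is affine, $G'$-equivariant and $G''$-invariant. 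Because $\theta_{j1}=\theta_{j2}=0$ for $j>k'$, the character $\chi_{\theta_\sigma}$ is trivial on $G''$ and is pulled back along $G\to G'$ from $\chi_{\theta_{\sigma'}}$, so $\OO_R(\chi_{\theta_\sigma})=\phi'^{\ast}\OO_{R'}(\chi_{\theta_{\sigma'}})$ as $G$-linearised invertible sheaves. I would then show $R^{\sigma-ss}=\phi'^{-1}((R')^{\sigma'-ss})$: by Lemma \ref{lem: technical git over base result} this can be checked on fibres over $Y$, where by Proposition \ref{prop: quiver GIT} it amounts to the assertion that a $\mathcal{Q}$-representation $M$ is $\theta_\sigma$-semistable if and only if its restriction $M|_{\mathcal{Q}'}$ to the full subquiver $\mathcal{Q}'$ is $\theta_{\sigma'}$-semistable. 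For this, restriction $M'\mapsto M'|_{\mathcal{Q}'}$ sends subrepresentations of $M$ to subrepresentations of $M|_{\mathcal{Q}'}$ and preserves the value of $\theta_\sigma$ (which ignores the vertices indexed $>k'$), while any subrepresentation $N'$ of $M|_{\mathcal{Q}'}$ extends to the subrepresentation of $M$ obtained by taking the space $0$ at each $v_i$ and the full space $W_j$ at each $w_j$ for $i,j>k'$, again with the same $\theta_\sigma$-value; two open subsets of $R$ with the same fibres over every point of $Y$ coincide.

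\textbf{Step 2: $\phi'$ is a $G''$-torsor on the regular loci.} If $M$ is a point of $R_\tau^{[reg]}$ then $E:=M\otimes_A T$ is $(m_1,\underline{\LL})$-regular of topological type $\tau$, hence also $(m_1,\underline{\LL}')$-regular, so by Proposition \ref{prop: embedding 2} applied to $\mathcal{Q}'$ (legitimate by the choice of the $m_i$, made so that Proposition \ref{prop: embedding 2} and Theorem \ref{thm: comparison theorem} hold for both $\mathcal{Q}$ and $\mathcal{Q}'$) the restriction $M|_{\mathcal{Q}'}$ equals $\Hom(T',E)$, lies in $(R')_\tau^{[reg]}$, and satisfies $E\cong (M|_{\mathcal{Q}'})\otimes_{A'}T'$. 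Thus $E$ — and with it the pushforwards $\mathcal{V}_j,\mathcal{W}_j$ to $(Q')^{[\sigma'-ss]}$ of $\mathbb{F}'\otimes\LL_j^{m_1}$ and $\mathbb{F}'\otimes\LL_j^{m_2}$ for $j>k'$ (where $\mathbb{F}'$ denotes the universal sheaf over $(Q')^{[\sigma'-ss]}$), which are locally free of ranks $d_{j1},d_{j2}$ by $(m_1,\underline{\LL})$-regularity and cohomology and base change — depends only on $M|_{\mathcal{Q}'}$, and the remaining arrows of $M$ are precisely the multiplication maps of $E$ read through a choice of frames of the $\mathcal{V}_j$ and $\mathcal{W}_j$. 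Hence $\phi'$ identifies $Q^{[\sigma-ss]}$ with the fibre product over $(Q')^{[\sigma'-ss]}$ of the frame bundles $\mathrm{Isom}(\OO^{d_{j1}},\mathcal{V}_j)$ and $\mathrm{Isom}(\OO^{d_{j2}},\mathcal{W}_j)$, $j>k'$, which is a $G''$-torsor over $(Q')^{[\sigma'-ss]}$; surjectivity uses that a $\sigma'$-semistable sheaf of topological type $\tau$ is also $\sigma$-semistable (only $\LL_1,\dots,\LL_{k'}$ enter the stability condition) and, by boundedness of $\sigma'$ with respect to $\tau$ together with the choice of $m_0$, is $(m_0,\underline{\LL})$-regular. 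In particular $Q^{[\sigma-ss]}\sslash G''=(Q')^{[\sigma'-ss]}$.

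\textbf{Step 3: passage to the closures and conclusion.} Since $\phi'(\overline{Q^{[\sigma-ss]}})\subseteq\overline{\phi'(Q^{[\sigma-ss]})}=\overline{(Q')^{[\sigma'-ss]}}$, Step 1 and Theorem \ref{thm: relative moduli spaces} applied to the positive parameter $\sigma'$ (which is bounded because $\sigma$ is, and which gives $(D')^{\sigma'-ss}=(Q')^{[\sigma'-ss]}$) yield $D^{\sigma-ss}=\overline{Q^{[\sigma-ss]}}\cap\phi'^{-1}((Q')^{[\sigma'-ss]})$, a closed subscheme of $\phi'^{-1}((Q')^{[\sigma'-ss]})$. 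Therefore $\psi:=\phi'|_{D^{\sigma-ss}}\colon D^{\sigma-ss}\to(Q')^{[\sigma'-ss]}$ is affine, $G''$-invariant and surjective (it is already surjective on the $G''$-invariant dense open $Q^{[\sigma-ss]}$, which by Theorem \ref{thm: comparison theorem}(1) is contained in $D^{\sigma-ss}$). To see it is a good quotient for $G''$ it remains to verify $\OO_{(Q')^{[\sigma'-ss]}}\xrightarrow{\ \sim\ }(\psi_{\ast}\OO_{D^{\sigma-ss}})^{G''}$; but, $D^{\sigma-ss}$ being reduced and $Q^{[\sigma-ss]}\subseteq D^{\sigma-ss}$ being open and dense, restriction gives injections $\OO_{(Q')^{[\sigma'-ss]}}\hookrightarrow(\psi_{\ast}\OO_{D^{\sigma-ss}})^{G''}\hookrightarrow(\psi_{\ast}\OO_{Q^{[\sigma-ss]}})^{G''}=\OO_{(Q')^{[\sigma'-ss]}}$ whose composite is the identity, forcing both to be isomorphisms. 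Hence $\psi$ is a good quotient, so by Proposition \ref{prop: good moduli spaces and good quotients} and uniqueness of good quotients the induced $G'$-equivariant morphism $\hat{\phi}'\colon D^{\sigma-ss}\sslash G''\to(D')^{\sigma'-ss}=(Q')^{[\sigma'-ss]}$ is an isomorphism.

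\textbf{Main obstacle.} The heart of the argument is Step 2: identifying $Q^{[\sigma-ss]}$ as a $G''$-torsor over $(Q')^{[\sigma'-ss]}$ at the level of schemes (not merely of points) requires the full strength of the universal property in Proposition \ref{prop: embedding 2}, and one must choose the constants $m_0\ll m_1\ll m_2$ so that the functorial statements hold simultaneously for the data attached to $\mathcal{Q}$ and to $\mathcal{Q}'$, using boundedness of $\sigma'$ to upgrade $(m_0,\underline{\LL}')$-regularity to $(m_0,\underline{\LL})$-regularity on the semistable locus. Once this is in place, Steps 1 and 3 are essentially formal manipulations with GIT quotients and closures.
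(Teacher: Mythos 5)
Your argument is correct in outline, but it takes a genuinely different route from the paper: the paper's entire proof consists of quoting \cite{grebrosstomamaster} Proposition 2.3 for the absolute case $Y=\spec\C$ and then deducing the relative statement from Lemma \ref{lem: technical git over base result}, whereas you reprove the absolute-case content from scratch, directly in the relative setting. Your Step 1 is precisely the bootstrapping mechanism the paper relies on (fibrewise comparison of relative GIT-semistable loci via Lemma \ref{lem: technical git over base result}, together with King's criterion to see that $\theta_\sigma$-semistability of a $\mathcal{Q}$-representation is equivalent to $\theta_{\sigma'}$-semistability of its restriction to $\mathcal{Q}'$); Steps 2 and 3 then open up the black box of the Greb--Ross--Toma proof, namely the frame-bundle description of $Q^{[\sigma-ss]}$ as a $G''$-torsor over $(Q')^{[\sigma'-ss]}$ and the descent of the structure sheaf through the closure. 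What your version buys is a self-contained proof with an explicit record of where the hypotheses on $m_0\ll m_1\ll m_2$ enter; what the paper's version buys is brevity and insulation from the internals of \cite{grebrosstomamaster}.

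Two points should be made explicit. First, in Step 2 the surjectivity of $Q^{[\sigma-ss]}\to(Q')^{[\sigma'-ss]}$ needs $(Q')^{[\sigma'-ss]}$ to parametrise sheaves of the \emph{full} topological type $\tau$ (prescribed Hilbert polynomials with respect to all of $\LL_1,\dots,\LL_k$, not only $\LL_1,\dots,\LL_{k'}$): on a reducible fibre a sheaf can have the truncated type without having type $\tau$, since the multirank is not determined by the $P_j$ with $j\le k'$ alone. Your usage is consistent with how the paper sets up $Q'$, but this is the one place where the identification could silently fail if the primed objects were defined with respect to the truncated type only. Second, in Step 3 the injectivity of $(\psi_{\ast}\OO_{D^{\sigma-ss}})^{G''}\to(\psi_{\ast}\OO_{Q^{[\sigma-ss]}})^{G''}$ requires $Q^{[\sigma-ss]}$ to be \emph{schematically} dense in $D^{\sigma-ss}$, so $\overline{Q^{[\sigma-ss]}}$ should be taken to be the scheme-theoretic closure; reducedness of $D^{\sigma-ss}$ together with topological density does not by itself give the injection unless $Q^{[\sigma-ss]}$ is itself reduced. (Note also that the first injection needs no separate justification, since its composite with the second is the identity.) Finally, $\Delta\cap\hat{G}'=\{1\}$ rather than $\Delta'$; the assertion that $G$ acts on $R$ through $G'\times G''$ is nevertheless correct, as the paper itself states, since $\hat{G}''$ injects into $G$ with quotient $\hat{G}'/\Delta'$. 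None of these points affects the substance of your argument.
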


\begin{proof}
    In the case $Y = \spec \C$, this is \cite[Proposition 2.3]{grebrosstomamaster}. The result in the relative setting then follows from the fibrewise setting by invoking Lemma \ref{lem: technical git over base result}, as in the proof of Theorem \ref{thm: relative moduli spaces}.
\end{proof}

\begin{cor}[\cite{grebrosstomamaster} Corollary 2.4] \label{cor: GRT degenerate setting}
	Suppose $\sigma$ is a degenerate bounded stability parameter. Then there exists a good quotient $M_{\sigma, \tau} := D^{\sigma-ss} \sslash G \cong (D')^{\sigma'-ss} \sslash G'$ of $Y$-schemes for the action of $G$ on $D^{\sigma-ss}$. 	The scheme $M_{\sigma, \tau}$ is projective over $Y$. Moreover, the closed points of $M_{\sigma, \tau}$ are in 1-1 correspondence with S-equivalence classes of $\sigma$-semistable sheaves over geometric fibres of $\pi : X \to Y$. \qed
\end{cor}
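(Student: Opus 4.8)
The plan is to deduce the corollary from Proposition \ref{prop: degenerate loci comparison} by reducing to the already-established positive case of Theorem \ref{thm: relative moduli spaces}. Recall from Section \ref{section: comparison of semistability degenerate} that $G$ acts on $R$ as a product $G' \times G''$, where $G''$ collects the rows $j = k'+1, \dots, k$ of $\mathcal{Q}$, and that $\phi' : R \to R'$ is $G$-equivariant with $G''$ acting trivially on $R'$. Since $\sigma_j = 0$ for $j > k'$ we have $\theta_{j1} = \theta_{j2} = 0$ for such $j$, so the character $\chi_{\theta_{\sigma}}$ of $G$ is trivial on $G''$; and since $d_{i1} = d_{i1}'$ and $d_{i2} = d_{i2}'$ for $i \leq k'$, while $\sum_i \sigma_i d_{i1} = \sum_{i \leq k'} \sigma_i d_{i1}$ (and likewise with the second index), $\chi_{\theta_{\sigma}}$ restricts on $G'$ to exactly the character $\chi_{\theta_{\sigma'}}$ attached to the positive parameter $\sigma'$ on the subquiver $\mathcal{Q}'$. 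Consequently (quotienting in stages, using that $[D^{\sigma-ss}/G] = [[D^{\sigma-ss}/G'']/G']$ and that good moduli spaces compose) the GIT quotient $D^{\sigma-ss} \sslash_{\theta_{\sigma}} G$ agrees with the iterated quotient $(D^{\sigma-ss} \sslash G'') \sslash_{\theta_{\sigma'}} G'$, and Proposition \ref{prop: degenerate loci comparison} identifies the inner quotient with the $G'$-scheme $(D')^{\sigma'-ss} = (Q')^{[\sigma'-ss]}$.

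For the outer quotient I would invoke Theorem \ref{thm: relative moduli spaces} applied to $\sigma'$. This parameter is positive by construction, and it is bounded because $P_E^{\sigma}(t) = \sum_j \sigma_j \chi(X, E \otimes \LL_j^t)$ has no term with $j > k'$, so $\sigma$- and $\sigma'$-(semi)stability are literally the same condition and the $\sigma'$-semistable sheaves of topological type $\tau$ form the same (bounded) family as the $\sigma$-semistable ones. Theorem \ref{thm: relative moduli spaces} then supplies a good quotient $(Q')^{[\sigma'-ss]} \to M_{\sigma', \tau} = (Q')^{[\sigma'-ss]} \sslash_{\theta_{\sigma'}} G'$ of $Y$-schemes, with $M_{\sigma',\tau}$ projective over $Y$; composing it with the inner quotient realises the good quotient $D^{\sigma-ss} \to M_{\sigma,\tau} := D^{\sigma-ss} \sslash G$ and identifies it with $(D')^{\sigma'-ss} \sslash G' = M_{\sigma',\tau}$, which is projective over $Y$. (As in the proof of Proposition \ref{prop: degenerate loci comparison}, one could instead quote \cite{grebrosstomamaster} Corollary 2.4 for $Y = \spec \C$ and pass to the relative setting via Lemma \ref{lem: technical git over base result}.)

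Finally, for the closed points I would transport the corresponding statement of Theorem \ref{thm: relative moduli spaces}: the closed points of $M_{\sigma',\tau}$ correspond to S-equivalence classes of $\sigma'$-semistable sheaves over the geometric fibres of $\pi$. Since $\sigma$- and $\sigma'$-(semi)stability coincide, and since a sheaf has the same reduced multi-Hilbert polynomial for $\sigma$ and for $\sigma'$, its Jordan-H\"older filtrations in the two settings agree and hence so do its associated graded object and S-equivalence class; these are therefore exactly the S-equivalence classes of $\sigma$-semistable sheaves over geometric fibres of $\pi$, as required. I expect the only real obstacle to be the bookkeeping in the first paragraph --- that the group and the linearisation genuinely decompose as $G' \times G''$ and $\chi_{\theta_{\sigma'}} \boxtimes \mathbf{1}$ compatibly with $\phi'$ --- but this is precisely the structure underpinning Proposition \ref{prop: degenerate loci comparison} (and its source \cite{grebrosstomamaster} Proposition 2.3), so with that proposition in hand the corollary is essentially formal.
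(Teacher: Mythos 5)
Your proposal is correct and follows essentially the same route the paper intends: the corollary is deduced from Proposition \ref{prop: degenerate loci comparison} by quotienting in stages (the character $\chi_{\theta_\sigma}$ being trivial on $G''$ and restricting to $\chi_{\theta_{\sigma'}}$ on $G'$) and then invoking the positive case, Theorem \ref{thm: relative moduli spaces}, for $\sigma'$ --- which is exactly the content of \cite{grebrosstomamaster} Corollary 2.4 transported to the relative setting via Lemma \ref{lem: technical git over base result}. Your observation that $\sigma$- and $\sigma'$-(semi)stability and S-equivalence literally coincide because the terms with $\sigma_j = 0$ drop out of the multi-Hilbert polynomial is the right justification for the final statement about closed points.
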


\subsection{The Master Space Construction} \label{section: master space}

Instead of considering a single stability parameter, suppose we have a finite set of bounded stability parameters $\mathfrak{S}$, with each $\sigma \in \mathfrak{S}$ defined with respect to the line bundles $\LL_i$. These stability parameters are allowed to be degenerate.

Fix positive integers $m_2 \gg m_1 \gg m_0 \gg 0$ such that for each stability parameter $\sigma \in \mathfrak{S}$, as well as for any positive stability parameter $\sigma'$ obtained by truncating a degenerate stability condition $\sigma \in \mathfrak{S}$, the conclusions of Proposition \ref{prop: embedding 1} and Theorem \ref{thm: comparison theorem} all hold; suppose further that $m_0, m_1$ and $m_2$ are chosen so that the formation of the locally free sheaves $H_{ij} = \pi_{\ast}(\LL_i^{-m_1} \otimes \LL_j^{m_2})$ is compatible with base change. Form the affine $Y$-scheme
$$ Z = Z_{\mathfrak{S}} = \bigcup_{\sigma \in \mathfrak{S}} \overline{Q^{[\sigma-ss]}}. $$
For each $\sigma \in \mathfrak{S}$, let
$$ Z^{\sigma-ss} := Z \cap R^{\sigma-ss}. $$
The moduli spaces $M_{\sigma, \tau}$, for $\sigma \in \mathfrak{S}$, are all GIT quotients of the master space $Z$.

\begin{theorem}[\cite{grebrosstoma} Theorem 10.1, \cite{grebrosstomamaster} Theorem 4.2] \label{thm: master space}
	For each $\sigma \in \mathfrak{S}$, there is an equality $Z^{\sigma-ss} = (\overline{Q^{[\sigma-ss]}})^{\sigma-ss} = \overline{Q^{[\sigma-ss]}} \cap R^{\sigma-ss}$. In particular, the moduli space $M_{\sigma, \tau}$ is isomorphic to the good quotient $Z^{\sigma-ss} \sslash G$ of $Y$-schemes.
\end{theorem}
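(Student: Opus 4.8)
The plan is to reduce everything to the absolute case $Y = \spec\C$, where this is exactly \cite{grebrosstoma} Theorem 10.1 and \cite{grebrosstomamaster} Theorem 4.2, and then propagate the statement to the relative setting fibrewise using Lemma \ref{lem: technical git over base result}. First I would observe that all the schemes in sight are affine over $Y$ (the scheme $R$ is affine over $Y$ by construction, and $Z = \bigcup_{\sigma \in \mathfrak{S}} \overline{Q^{[\sigma-ss]}}$ is a closed subscheme of $R$, hence also affine over $Y$), and that the linearisation $\OO_R(\chi_{\theta_\sigma})$ is a twist of the trivial bundle by a character of $G$. This is precisely the situation covered by Lemma \ref{lem: technical git over base result}: for each geometric point $y \in Y$ we have $(Z^{\sigma-ss})_y = (Z_y)^{\sigma-ss}$, where the right-hand side is the GIT semistable locus for the induced $G$-action on the fibre $Z_y$ over $\spec \C$, and the good quotient $Z^{\sigma-ss} \sslash G$ is the relative $\mathbf{Proj}$ of the invariant section algebra, with its formation compatible with the base change to any $y \in Y$.

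Next I would spell out what $Z_y$ is: since the formation of the sheaves $H_{ij} = \pi_{\ast}(\LL_i^{-m_1} \otimes \LL_j^{m_2})$ commutes with base change (by the choice of $m_1, m_2$), the fibre $R_y$ is the representation space $\mathrm{Rep}(\mathcal{Q}, \underline{d})$ built from the fibres $X_y$, $\LL_{i,y}$, and similarly $(\overline{Q^{[\sigma-ss]}})_y = \overline{Q_y^{[\sigma-ss]}}$ by the compatibility of the $\Hom(T,-)$ construction with base change (Proposition \ref{prop: embedding 2}) together with the fact that taking closure commutes with passing to fibres over a point (one uses that $Q^{[\sigma-ss]}$ is flat over $Y$ on its image, so the fibre of the closure is the closure of the fibre). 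Hence $Z_y = \bigcup_{\sigma \in \mathfrak{S}} \overline{Q_y^{[\sigma-ss]}}$ is exactly the master space of \cite{grebrosstomamaster} for the fibre $X_y$. The absolute case of the theorem then gives, for each $\sigma \in \mathfrak{S}$, the equality $(Z_y)^{\sigma-ss} = (\overline{Q_y^{[\sigma-ss]}})^{\sigma-ss} = \overline{Q_y^{[\sigma-ss]}} \cap R_y^{\sigma-ss} = Q_y^{[\sigma-ss]}$ in both the positive (Theorem \ref{thm: master space} reference \cite{grebrosstoma}) and degenerate (reference \cite{grebrosstomamaster}) cases, after truncating degenerate parameters as in Section \ref{section: GRT construction degenerate} and invoking Proposition \ref{prop: degenerate loci comparison}.

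Finally I would glue: the fibrewise equalities $(Z^{\sigma-ss})_y = (\overline{Q^{[\sigma-ss]}})^{\sigma-ss}_y$ for all geometric points $y$, together with the fact that both $Z^{\sigma-ss}$ and $(\overline{Q^{[\sigma-ss]}})^{\sigma-ss}$ are open subschemes of the same scheme $R$ (so a closed subscheme of $R$ which is fibrewise empty is empty), force the equality $Z^{\sigma-ss} = (\overline{Q^{[\sigma-ss]}})^{\sigma-ss} = \overline{Q^{[\sigma-ss]}} \cap R^{\sigma-ss}$ as subschemes of $R$. The isomorphism $M_{\sigma,\tau} \cong Z^{\sigma-ss} \sslash G$ then follows from Theorem \ref{thm: relative moduli spaces} (in the positive case) or Corollary \ref{cor: GRT degenerate setting} (in the degenerate case), since those identify $M_{\sigma,\tau}$ with $\overline{Q^{[\sigma-ss]}} \sslash_{\theta_\sigma} G = (\overline{Q^{[\sigma-ss]}} \cap R^{\sigma-ss}) \sslash G$, and Lemma \ref{lem: technical git over base result} guarantees the good quotient is the relative $\mathbf{Proj}$ of the invariant section algebra as claimed. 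I expect the main obstacle to be purely bookkeeping: checking carefully that all the constructions ($\Hom(T,-)$, the regularity loci, the semistable loci, the closures, and the subquiver truncation for degenerate $\sigma$) are compatible with base change to a geometric point, so that the fibre of the master space is genuinely the master space of the fibre — once that is in place, the reduction to \cite{grebrosstoma} and \cite{grebrosstomamaster} is immediate.
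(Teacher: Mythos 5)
Your proposal is correct and follows essentially the same route as the paper: the paper's proof is precisely the two-line reduction to the absolute case of \cite{grebrosstoma} Theorem 10.1 and \cite{grebrosstomamaster} Theorem 4.2 followed by an appeal to Lemma \ref{lem: technical git over base result}, and your additional bookkeeping (base-change compatibility of the $H_{ij}$, of the $\Hom(T,-)$ construction, and of the closures, so that the fibre of the master space is the master space of the fibre) is exactly the detail the paper leaves implicit.
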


\begin{proof}
    In the case where $Y$ is a point, this is \cite[Theorem 10.1]{grebrosstoma} (when $\sigma$ is positive) and \cite[Theorem 4.2]{grebrosstomamaster} (when $\sigma$ is degenerate). The result in the relative setting then follows as a consequence of Lemma \ref{lem: technical git over base result}, as in the proof of Theorem \ref{thm: relative moduli spaces}.
\end{proof}

\subsection{Existence of Further Quotients} \label{section: further quotients}

Assume now that $X$ and $Y$ both admit actions of a reductive group $H$, with $\pi$ an $H$-equivariant morphism. As in the hypotheses of Proposition \ref{prop: relative GIT result}, assume there exists an equivariant open immersion $\iota : Y \hookrightarrow \overline{Y}$ into a projective scheme $\overline{Y}$ acted on by $H$, with an ample linearisation $L$ such that (over $\spec \C$)
$$ \overline{Y}^{ss}(L) = \overline{Y}^s(L) = Y. $$ 
Assume in addition that the $\pi$-very ample line bundles $\LL_1, \dots, \LL_k$ are $H$-linearised. Fix a topological type $\tau$, and fix a finite set of bounded stability parameters $\mathfrak{S}$ defined with respect to the $\LL_i$.

As in Section \ref{section: GRT construction}, form the $Y$-scheme
$$ R = \prod_{i,j=1}^k \underline{\mathrm{Hom}}_Y(\OO_Y^{d_{i1}} \otimes H_{ij}, \OO_Y^{d_{j2}}), \quad H_{ij} = \pi_{\ast}(\LL_i^{-m_1} \otimes \LL_j^{m_2}), $$
with the natural numbers $m_0, m_1$ and $m_2$ chosen such that the formation of the locally free sheaves $H_{ij} = \pi_{\ast}(\LL_i^{-m_1} \otimes \LL_j^{m_2})$ is compatible with base change and such that Theorem \ref{thm: master space} applies for the fibrewise action of $G \times Y$ on $R/Y$, where as before $G = \prod_{j=1}^k (GL_{d_{j1}}(\C) \times GL_{d_{j2}}(\C)) / \Delta$. For each $\sigma \in \mathfrak{S}$, fix a positive integer $c = c_{\sigma}$ such that the vector $c \theta_{\sigma}$ is integral, and let $\chi_{\sigma} = \chi_{c \theta_{\sigma}}$ denote the corresponding character of $G$. The actions of $H$ on $X$, $Y$ and the $\mathcal{L}_i$ give rise to an induced action of $H$ on the sheaves $H_{ij}$ lifting the action of $H$ on $Y$. This in turn gives rise to an $H$-action on $R$, with the natural projection $p : R \to Y$ being $H$-equivariant. As $G$ acts fibrewise with respect to $p$, the actions of $G$ and $H$ on $R$ commute. In particular, we may consider each $\OO_R(\chi_{\sigma})$ as a $(G \times H)$-linearisation on $R$.

Let $Z = Z_{\mathfrak{S}}$ also be as above, and let $p_{\sigma} : Z^{\sigma-ss} \to M_{\sigma, \tau}$ be the good quotient over $Y$ given by Theorem \ref{thm: master space}; from Lemma \ref{lem: technical git over base result} this is explicitly given as
$$ M_{\sigma, \tau} = Z^{\sigma-ss} \sslash G = \mathbf{Proj}_Y \left( \bigoplus_{m=0}^{\infty} (p_{\ast} (\OO_R(\chi_{\sigma}^m)|_Z))^G \right). $$
Since each $\OO_R(\chi_{\sigma}^k)$ is naturally $H$-linearised and as the $G$ and $H$-actions on $R$ commute, $M_{\sigma, \tau}$ admits a residual $H$-action, and the relatively ample line bundle $\OO_M(1)$ on $M_{\sigma, \tau}$ arising from the $\mathbf{Proj}_Y$-construction admits an induced $H$-linearisation. If $q_{\sigma} : M_{\sigma, \tau} \to Y$ denotes the ($H$-equivariant) structure morphism, Proposition \ref{prop: relative GIT result} is applicable to $q_{\sigma}$; in particular there exists a geometric quotient $M_{\sigma, \tau} \sslash H$ of $M_{\sigma, \tau}$ and a projective morphism $\hat{q}_{\sigma} : M_{\sigma, \tau} \sslash H \to Y \sslash H$ induced by $q_{\sigma}$. The composition $Z^{\sigma-ss} \to M_{\sigma, \tau} \to M_{\sigma, \tau} \sslash H$ is then a good quotient for the $(G \times H)$-action on $Z^{\sigma-ss}$. This proves the following result.

\begin{prop} \label{prop: further quotients}
	Let $H$ be a reductive group. Let $\pi : X \to Y$ be an equivariant projective morphism between quasi-projective schemes acted on by $H$. Assume there exists an equivariant open immersion $\iota : Y \hookrightarrow \overline{Y}$ into a projective scheme $\overline{Y}$ acted on by $H$, with an ample linearisation $L$ such that $\overline{Y}^{ss}(L) = \overline{Y}^s(L) = Y$. Fix $H$-linearised, $\pi$-very ample invertible sheaves $\LL_1, \dots, \LL_k$, fix a topological type $\tau$ of flat sheaves on the fibres of $\pi$ and fix a finite set of bounded stability parameters $\mathfrak{S}$, with both $\tau$ and $\mathfrak{S}$ defined with respect to the $\LL_i$. Form the subscheme $Z = Z_{\mathfrak{S}}$ as described in Section \ref{section: master space}. Finally, fix $\sigma \in \mathfrak{S}$.
	
	There then exists a projective good quotient $Z^{\sigma-ss} \sslash G \times H = M_{\sigma, \tau} \sslash H$ of $Z^{\sigma-ss}$, and there exists a natural projective morphism $\hat{q}_{\sigma} : Z^{\sigma-ss} \sslash G \times H \to Y \sslash H = \overline{Y}^{s}(L) / H$. If $y \in Y$ is a closed point, the fibre of $\hat{q}_{\sigma}$ over $y$ is isomorphic to $(M_{\sigma, \tau})_y / H_y$. \qed
\end{prop}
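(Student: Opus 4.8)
The plan is to assemble the objects constructed in the discussion preceding the statement and then to invoke Proposition \ref{prop: relative GIT result} for the structure morphism $q_\sigma : M_{\sigma,\tau}\to Y$. First I would recall, from Theorem \ref{thm: master space} in its relative form (which rests on Lemma \ref{lem: technical git over base result}), that $p_\sigma : Z^{\sigma-ss}\to M_{\sigma,\tau}$ is a good quotient of $Y$-schemes for the fibrewise $G$-action, with
\[ M_{\sigma,\tau} = Z^{\sigma-ss}\sslash G = \mathbf{Proj}_Y\Bigl(\bigoplus_{m\geq 0}\bigl(p_{\ast}(\OO_R(\chi_\sigma^m)|_Z)\bigr)^G\Bigr), \]
and that $q_\sigma : M_{\sigma,\tau}\to Y$ is projective. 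The $H$-action on $R$ commutes with the fibrewise $G$-action, preserves $Z$, and lifts to each linearisation $\OO_R(\chi_\sigma^m)$; hence the graded $\OO_Y$-algebra $\bigoplus_m\bigl(p_{\ast}(\OO_R(\chi_\sigma^m)|_Z)\bigr)^G$ is $H$-equivariant, and functoriality of the relative $\mathbf{Proj}$ equips $M_{\sigma,\tau}$ with a residual $H$-action for which $q_\sigma$ is equivariant and equips the $q_\sigma$-ample invertible sheaf $\OO_M(1)$ with an $H$-linearisation.

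Next I would check that the hypotheses of Proposition \ref{prop: relative GIT result} hold with $q := q_\sigma$ and $\mathcal{N} := \OO_M(1)$: $M_{\sigma,\tau}$ and $Y$ are quasi-projective schemes carrying $H$-actions ($M_{\sigma,\tau}$ being projective, hence quasi-projective, over the quasi-projective $Y$), $q_\sigma$ is $H$-equivariant and projective, $\OO_M(1)$ is an $H$-linearised relatively ample line bundle, and the equivariant open immersion $\iota : Y\hookrightarrow\overline{Y}$ with ample linearisation $L$ satisfying $\overline{Y}^{ss}(L)=\overline{Y}^s(L)=Y$ is one of the standing assumptions. Proposition \ref{prop: relative GIT result} then produces projective geometric quotients $M_{\sigma,\tau}\sslash H$ and $Y\sslash H$ — and $Y\sslash H=\overline{Y}\sslash_L H=\overline{Y}^s(L)/H$ is projective because $\overline{Y}$ is projective and $L$ is ample — together with a projective morphism $\hat q_\sigma : M_{\sigma,\tau}\sslash H\to Y\sslash H$ whose fibre over the orbit $H\cdot y$ of a closed point $y\in Y$ is $(M_{\sigma,\tau})_y/H_y$. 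Since $Y\sslash H$ is a geometric quotient, a closed point $y\in Y$ maps to the point of $Y\sslash H$ representing $H\cdot y$, which is the sense intended by ``the fibre of $\hat q_\sigma$ over $y$''.

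It remains to see that $M_{\sigma,\tau}\sslash H$ is a good quotient of $Z^{\sigma-ss}$ for the combined $(G\times H)$-action. The composite $Z^{\sigma-ss}\xrightarrow{p_\sigma} M_{\sigma,\tau}\to M_{\sigma,\tau}\sslash H$ is a good quotient for $G$ followed by a geometric (hence good) quotient for $H$; as the two actions commute, a composite of good quotients by commuting reductive group actions is again a good quotient by the product group (the defining conditions of a good quotient may be checked locally on the target and are straightforward to verify, cf. \cite{mfk}), so $Z^{\sigma-ss}\to M_{\sigma,\tau}\sslash H$ is a good quotient for $G\times H$ and $Z^{\sigma-ss}\sslash(G\times H)=M_{\sigma,\tau}\sslash H$. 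Combined with the previous paragraph this yields every assertion. The only step demanding genuine care — and the closest thing to an obstacle — is the descent of the $H$-linearisation onto $\OO_M(1)$ through functoriality of the relative $\mathbf{Proj}$ construction, since it is precisely this that legitimises the application of Proposition \ref{prop: relative GIT result}; everything else is bookkeeping.
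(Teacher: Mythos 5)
Your proposal is correct and follows essentially the same route as the paper: the paper's argument (given in the discussion immediately preceding the statement) likewise endows $M_{\sigma,\tau}=\mathbf{Proj}_Y\bigl(\bigoplus_m (p_{\ast}(\OO_R(\chi_\sigma^m)|_Z))^G\bigr)$ with the residual $H$-action and the induced $H$-linearisation on $\OO_M(1)$, applies Proposition \ref{prop: relative GIT result} to $q_\sigma$, and concludes by observing that the composite $Z^{\sigma-ss}\to M_{\sigma,\tau}\to M_{\sigma,\tau}\sslash H$ is a good quotient for the commuting $(G\times H)$-action. Your explicit verification of the hypotheses of Proposition \ref{prop: relative GIT result} and your remark on the descent of the linearisation through the relative $\mathbf{Proj}$ are exactly the points the paper treats, only slightly more tersely.
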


The good quotient $Z^{\sigma-ss} \sslash G \times H$ can be understood as the quotient of a relative GIT semistable locus for a $G \times H$-linearisation on $Z$. 

\begin{prop} \label{prop: quotient of Z^{sigma-ss} is a quotient of a relative ss locus}
	There exist positive integers $a, N > 0$ such that for all $\sigma \in \mathfrak{S}$, there is an equality
	$$ Z^{\sigma-ss} = Z^{G \times H-ss}(p^{\ast}(L^{aN}|_Y) \otimes \OO_Z(\chi_{\sigma}^N)/Y) $$
	of open subschemes of $Z$.
\end{prop}
	
\begin{proof}
	Fix $\sigma \in \mathfrak{S}$. From the proof of Proposition \ref{prop: relative GIT result}, for all $a \gg 0$, each point of $M_{\sigma, \tau}$ is $H$-stable with respect to the linearisation $q_{\sigma}^{\ast}(L^a|_Y) \otimes \OO_M(1)$. By Proposition \ref{prop: alper and git} some power $\OO_M(N_{\sigma})$ of $\OO_M(1)$ pulls back along the good quotient $p_{\sigma}$ to $\OO_Z(\chi_{\sigma}^{N_{\sigma}})|_{Z^{\sigma-ss}}$; we may assume without loss of generality that $N_{\sigma} = N$ is the same for each $\sigma \in \mathfrak{S}$.
	
	Fixing $a > 0$ sufficiently large, we have equalities
	$$ Z^{\sigma-ss} = Z^{G-ss}(\OO_Z(\chi_{\sigma}^N)/Y) = Z^{G-ss}(p^{\ast}(L^{aN}|_Y) \otimes \OO_Z(\chi_{\sigma}^N)/Y); $$
	the first equality follows from $Z^{\sigma-ss} = Z \cap R^{\sigma-ss} = Z \cap R^{G-ss}(\OO_R(\chi_{\sigma}^N)/Y)$ and the second equality holds since the relative $G$-semistable locus is determined fibrewise (cf. Lemma \ref{lem: technical git over base result}). From this and from the observation that $G \times H$-semistability implies $G$-semistability, it suffices to show that there is an inclusion
	$$ Z^{G-ss}(p^{\ast}(L^{aN}|_Y) \otimes \OO_Z(\chi_{\sigma}^N)/Y) \subset Z^{G \times H-ss}(p^{\ast}(L^{aN}|_Y) \otimes \OO_Z(\chi_{\sigma}^N)/Y) $$
	of open subschemes of $Z$. 
	
	If $z \in Z^{G-ss}(p^{\ast}(L^{aN}|_Y) \otimes \OO_Z(\chi_{\sigma}^N)/Y)$ lies over $\overline{z} \in M_{\sigma, \tau}$, as $\overline{z}$ is $H$-stable there exists $k > 0$ and a section $f \in H^0(M_{\sigma, \tau}, q_{\sigma}^{\ast}(L^{akN}|_Y) \otimes \OO_M(kN))^H$ such that the non-vanishing locus $(M_{\sigma, \tau})_f$ is affine and $\overline{z} \in (M_{\sigma, \tau})_f$. If $f' := p_{\sigma}^{\ast}(f)$, then $f'$ is a $G \times H$-invariant section of $p^{\ast}(L^{aNk}|_Y) \otimes \OO_Z(\chi_{\sigma}^{Nk})$ with affine non-vanishing locus $p_{\sigma}^{-1}((M_{\sigma, \tau})_f)$ containing $z$. Hence $z \in Z^{G \times H-ss}(p^{\ast}(L^{aN}|_Y) \otimes \OO_Z(\chi_{\sigma}^N)/Y)$.
\end{proof}

\section{The GIT Construction of the Moduli Spaces} \label{section: carrying out the construction}

With the setup of the previous section, we are ready to prove Theorem \ref{thm: Theorem A}.

\subsection{Setup} \label{section: GIT construction setup} Fix a projective variety $X$ and consider the stack of stable maps $\overline{\M}_{g,n}(X, \beta)$. Let $\pi_{\UU} : \UU \overline{\M}_{g,n}(X, \beta) \to \overline{\M}_{g,n}(X, \beta)$ denote the universal family over $\overline{\M}_{g,n}(X, \beta)$. Fix $\pi_{\UU}$-ample $\Q$-invertible sheaves $\LL_1, \dots, \LL_k$. For each $\sigma \in \Sigma := (\Q^{\geq 0})^k \setminus \{0\}$ let $\LL_{\sigma} = \bigotimes_i \LL_i^{\sigma_i}$, and consider the stacks
$$ \overline{\J}ac(\sigma) := \overline{\J}ac_{g,n,d,r}^{ss}(X, \beta)(\LL_{\sigma}) \text{   and   } \overline{\J}(\sigma) := \overline{\J}_{g,n,d,r}^{ss}(X, \beta)(\LL_{\sigma}). $$
Since replacing each $\LL_i$ with $\LL_i^m$ (where $m$ is a positive integer) does not alter the stability condition, we assume without loss of generality that each $\LL_i$ is a genuine invertible sheaf and is relatively very ample.

\begin{lemma} \label{lem: boundedness result}
	The collection of all degree $d$, rank $r$, torsion-free coherent sheaves $E$ over objects of the stack of stable maps $\overline{\M}_{g,n}(X,\beta)$ for which there exists some $\sigma \in \Sigma$ such that $E$ is $\sigma$-semistable, forms a bounded family.
\end{lemma}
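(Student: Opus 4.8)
The plan is to reduce the statement to a boundedness result already in the literature by producing a single polarisation that dominates all the stability conditions $\sigma \in \Sigma$ simultaneously. First I would recall that $\sigma$-semistability on a nodal curve depends only on the multidegrees of $\LL_1, \dots, \LL_k$ along the components (Equation \eqref{eqn: multi slope stability nodal curve}), and that it coincides with ordinary Gieseker semistability with respect to the $\Q$-ample line bundle $\LL_\sigma = \bigotimes_i \LL_i^{\sigma_i}$, as recorded at the end of Section \ref{section: sheaves on nodal curves}. Since the base $\overline{\M}_{g,n}(X,\beta)$ is of finite type and the $\LL_i$ are relatively ample, there is a uniform bound on the relevant numerical invariants: the arithmetic genus $g$ is fixed, the number of irreducible components $\rho$ of a source curve is bounded, and the degrees $\deg_{C_j}\LL_i$ are bounded above and below as $(C\to X)$ ranges over the stack and $j$ over the components.

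The key step is then the following numerical observation: for a rank $r$, degree $d$ torsion-free sheaf $E$ on a nodal curve $C$ of genus $g$, and for any $\sigma \in \Sigma$, the slope inequality $\mu^\sigma(F) \le \mu^\sigma(E)$ for subsheaves $F$ forces $\deg_D E$ to lie in a bounded range, uniformly in $\sigma$, $C$ and $E$; concretely, for each connected proper subcurve $D$, combining the subsheaf inequality applied to $F = E_D$ (and to the kernel of $E \to E_D$) with the boundedness of the coefficients $\sigma_i \deg_{C_j}\LL_i / \sum \sigma_i\deg_{C_j}\LL_i$ — these are convex combinations, hence lie in $[0,1]$ — shows that the multidegree $(\deg_{C_1}E, \dots, \deg_{C_\rho}E)$ is confined to a finite set depending only on $g$, $r$, $d$ and the chosen $\LL_i$, but not on $\sigma$. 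I would spell this out using the description of $\mu^\sigma$ in \eqref{eqn: multi slope stability nodal curve}, exactly as in the $r=1$ subcurve reformulation given in the remark following it, but carried out for general $r$ via restriction to subcurves.

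Once the multidegrees are uniformly bounded, boundedness of the whole family follows from a standard argument: after twisting by a fixed relatively very ample sheaf (e.g.\ a large power of $\LL_1$), every such $E$ becomes globally generated with vanishing higher cohomology, the Hilbert polynomial with respect to $\LL_1$ takes finitely many values, and hence all the sheaves $E$ (over all stable maps and all $\sigma$) fit into finitely many Quot schemes over $\overline{\M}_{g,n}(X,\beta)$. One may alternatively invoke the Grothendieck-type boundedness criterion, or simply note that a family of sheaves on the fibres of a fixed family of projective curves with bounded rank and bounded multidegree is automatically bounded. I would phrase the conclusion in terms of the existence of a single integer $m$ such that every member of the family is $(m,\underline{\LL})$-regular, which is the form in which boundedness is used in Section \ref{section: functorial approach}.

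The main obstacle is the uniformity of the multidegree bound \emph{across all} $\sigma \in \Sigma$: for a single $\sigma$ this is the classical boundedness of semistable sheaves, but here one must check that the bound does not degenerate as $\sigma$ approaches the boundary of the cone (where some $\sigma_i \to 0$). The point that saves the argument is that the relevant weights are always convex combinations — normalising $\sum_i \sigma_i = 1$, the quantities $\sum_i \sigma_i \deg_D \LL_i$ and $\sum_i \sigma_i \deg \LL_i$ stay bounded away from $0$ and $\infty$ because each $\LL_i$ is relatively ample and the base is of finite type — so the bound on $\deg_D E$ obtained from the slope inequality is uniform. I would make sure this non-degeneracy is stated explicitly, since it is the one genuinely non-formal input.
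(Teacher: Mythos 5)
Your strategy --- first confining the multidegree of $E$ to a finite set uniformly in $\sigma$ via the basic inequality on subcurves, then appealing to a general boundedness statement --- is genuinely different from the paper's proof, and as written it has a gap in the second step when $r \geq 2$. The assertion that ``a family of sheaves on the fibres of a fixed family of projective curves with bounded rank and bounded multidegree is automatically bounded'' is false in rank $\geq 2$: the bundles $\OO_{\PP^1}(m) \oplus \OO_{\PP^1}(-m)$ all have rank $2$ and degree $0$ but do not form a bounded family. For the same reason, the claim that a single uniform twist renders every such $E$ globally generated with vanishing $H^1$ does not follow from the multidegree bound; uniform $(m,\underline{\LL})$-regularity is essentially equivalent to the boundedness you are trying to prove, and obtaining it requires re-using semistability, not just the multidegree. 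Your fallback, the Grothendieck/Kleiman criterion, would indeed work, but it needs a uniform bound on $\chi(F)$ (equivalently $h^0$, or the maximal slope) over \emph{all} subsheaves $F \subset E$, uniformly in $\sigma$ --- an input you do not supply, although it is available by the same convexity observation you make for subcurves, since $\mu^{\sigma}(F) \leq \mu^{\sigma}(E)$ together with the two-sided bounds on the denominators $\sum_{i,j} \sigma_i r_j(F)\deg_{C_j}\LL_i$ bounds $\chi(F)$ independently of $\sigma$. So the argument is repairable, but the step you label as ``standard'' is precisely where the remaining work lies; the multidegree computation by itself does not finish the proof. (For $r = 1$ your argument is essentially complete, since rank-one torsion-free sheaves of fixed multidegree on a bounded family of nodal curves do form a bounded family.)

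For comparison, the paper's proof skips the multidegree analysis entirely and goes straight for uniform regularity: by Serre duality $H^1(C, E \otimes L_i^{m-1}) \cong \mathrm{Hom}(E, \omega_C \otimes L_i^{1-m})^{\vee}$, and since both $E$ and the invertible sheaf $\omega_C \otimes L_i^{1-m}$ are $\sigma$-semistable, this Hom space vanishes once $m > (2g-2-d/r)/\deg_C L_i + 1$. Notably, the uniformity in $\sigma$ is automatic there: in the slope comparison for uniform-rank sheaves the $\sigma$-dependent denominator $\sum_i \sigma_i \deg_C L_i$ appears on both sides and cancels, so no convexity argument is needed at all. Combined with the finiteness of topological types of source curves (hence of Hilbert polynomials, via \eqref{eqn: MGS nodal curve}), boundedness follows from the Kleiman-type criterion. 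If you want to salvage your route, the cleanest fix is to replace your final paragraph with exactly this Serre-duality step, at which point the multidegree bound becomes superfluous.
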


\begin{proof}
	Without loss of generality it is enough to consider stability conditions in the compact convex set $\Sigma' := \left\{ \sigma \in \Sigma : \sum_{i=1}^k \sigma_i = 1 \right\}$. Since the stable maps in $\overline{\mathcal{M}}_{g,n}(X, \beta)$ vary in a bounded schematic family, there are only finitely many possible topological types of nodal curves appearing in such a stable map. As such, there exists a constant $\mu_0$ such that if $E$ is a degree $d$, uniform rank $r$, torsion-free coherent sheaf lying over a stable map in $\overline{\mathcal{M}}_{g,n}(X, \beta)$ (with underlying nodal curve $C$) which is $\LL_1$-semistable, then 
	$$ \mu^{\LL_i}(E) = \frac{\chi(E)}{r \sum_{j=1}^{\rho} \deg_{C_j} L_1} \leq \mu_0; $$
	here $L_i$ denotes the pullback of $\LL_i$ to $C$, and $C_1, \dots, C_{\rho}$ are the irreducible components of $C$.
	
	Take $\sigma \in \Sigma'$, and suppose $E$ is a degree $d$, uniform rank $r$, torsion-free coherent sheaf lying over a stable map in $\overline{\mathcal{M}}_{g,n}(X, \beta)$ which is $\sigma$-semistable. If $E$ is semistable with respect to $\LL_1$ then $\mu(E) \leq \mu_0$. Otherwise, let $E' \subset E$ denote the maximally destabilising subsheaf of $E$ with respect to $\LL_1$ (as $E$ is torsion-free, then $E'$ is of pure dimension $1$). For any $t \in [0,1]$, set
	$$ \sigma(t) := (1 - t + t \sigma_1, t \sigma_2, \dots, t \sigma_k) \in \Sigma'. $$
	Since $\mu^{\LL_1}(E') > \mu^{\LL_1}(E)$ and $\mu^{\sigma}(E') \leq \mu^{\sigma}(E)$, by continuity there must exist $t_0 \in [0,1]$ with $\mu^{\sigma(t_0)}(E') = \mu^{\sigma(t_0)}(E)$. This implies
	\begin{equation} \label{eqn: L_1 slope of MDSS}
		\mu^{\LL_1}(E') = \frac{\chi(E')}{\sum_j r_j(E') \deg_{C_j} L_1} = \frac{\sum_{i,j} \sigma(t_0)_i r_j(E') \deg_{C_j} L_i}{r (\sum_j r_j(E') \deg_{C_j} L_1) (\sum_{i,j} \sigma(t_0)_i \deg_{C_j} L_i)} \chi(E).
	\end{equation}
	Since each $0 \leq r_j(E') \leq r$ and are not all zero, since $\chi(E)$ is a constant, since the curve $C$ varies in a bounded family and since the set of stability conditions $\Sigma'$ is compact, it follows from Equation \eqref{eqn: L_1 slope of MDSS} that there exists a constant $\mu_1 \geq \mu_0$, depending only on $d, r, g, n, X, \beta$ and $\underline{\LL}$, such that the following holds: for any degree $d$, uniform rank $r$, torsion-free coherent sheaf $E$ lying over a stable map in $\overline{\mathcal{M}}_{g,n}(X, \beta)$ which is $\sigma$-semistable for some $\sigma \in \Sigma'$,
	\begin{enumerate}
		\item either $E$ is $\LL_1$-semistable, and hence $\mu^{\LL_1}(E) \leq \mu_1$; or
		\item $E$ is $\LL_1$-unstable, and the $\LL_1$-maximally destabilising subsheaf $E' \subset E$ satisfies $\mu^{\LL_1}(E') \leq \mu_1$.
	\end{enumerate}
It follows by \cite[Theorem 3.3.7]{huybrechts_lehn} that the collection of all such sheaves $E$ is bounded, which proves the result.
\end{proof}

As in Section \ref{section: BS construction}, fix an embedding $X \subset \PP^b$, set $\OO_X(1) = \OO_{\PP^b}(1) |_X$ and consider the universal family $\phi_{\UU} : \UU I \to I$ over the quasi-projective scheme $I$. Pulling back the sheaves $\LL_i$ along the forgetful morphism $I \to \overline{\M}_{g,n}(X, \beta)$ gives $\phi_{\UU}$-very ample invertible sheaves $\LL_i'$ on $\UU I$, which are easily seen to be equivariant with respect to the $GL(W)$-action on $I$. Take $\tau$ to be the topological type of degree $d$, rank $r$ torsion-free sheaves over the fibres of $\phi_{\UU}$ with respect to the $\LL_i'$ (cf. Section \ref{section: sheaves on nodal curves}). By Theorem \ref{thm: baldwin swinarski}, there exists an equivariant open immersion $I \hookrightarrow \overline{I}$ and a linearisation $L = L_{m_W, m_b, m_{\mathrm{pts}}}$ on $\overline{I}$ for the induced $SL(W)$-action for which
$$ \overline{I}^{ss}(L) = \overline{I}^s(L) = I, $$
with $\overline{M}_{g,n}(X, \beta) \cong I \sslash SL(W)$ isomorphic to the resulting geometric GIT quotient. In particular, we will be able to apply the result of Proposition \ref{prop: further quotients} to the universal family $\phi_{\UU} : \UU I \to I$ and the linearised sheaves $\LL_i'$.

Fix a finite subset $\mathfrak{S} \subset \Sigma$. We now restrict attention to stability parameters $\sigma \in \mathfrak{S}$. As the collection of sheaves in question is bounded (cf. Lemma \ref{lem: boundedness result}), we may pick natural numbers $m_0$, $m_1$ and $m_2$ such that Theorem \ref{thm: master space} applies for the projective morphism $\phi_{\UU} : \UU I \to I$ and the sheaves $\LL_i'$; from our choice of $m_i$ the formation of the locally free sheaves $H_{ij} = (\phi_{\UU})_{\ast}((\LL_i')^{-m_1} \otimes (\LL_j')^{m_2})$ is compatible with base change, and any sheaf appearing in $\overline{\J}_{g,n,d,r}(X, \beta)$ which is semistable with respect to any $\sigma \in \mathfrak{S}$ is $(m_0, \underline{\LL'})$-regular.

Form the $I$-schemes
$$ Z = Z_{\mathfrak{S}} = \bigcup_{\sigma \in \mathfrak{S}} \overline{Q^{[\sigma-ss]}} \subset R = \prod_{i,j=1}^k \underline{\mathrm{Hom}}_I(\OO_I^{d_{i1}} \otimes H_{ij}, \OO_I^{d_{j2}}), $$
with the dimension vector given by
$$ \underline{d} = (d_{11}, d_{12}, \dots, d_{k1}, d_{k2}) = (P_1(m_1), P_1(m_2), \dots, P_k(m_1), P_k(m_2)). $$
We endow $R/I$ with the usual fibrewise actions of the groups $G \times I$ and $\hat{G} \times I$, where
$$ \hat{G} = \prod_{j=1}^k (GL_{d_{j1}}(\C) \times GL_{d_{j2}}(\C)), \quad G = \hat{G} / \Delta. $$
As in Section \ref{section: further quotients}, the $\hat{G}$-action on $R$ extends to an action of $\hat{G} \times GL(W)$. The $\hat{G} \times GL(W)$-action naturally descends to an action of $G \times PGL(W)$, since the diagonal one-parameter subgroups of both $\hat{G}$ and $GL(W)$ act trivially.

We now restrict attention further to sheaves of uniform rank $r$.

\begin{lemma} \label{lem: flat families uniform rank}
	Let $\mathcal{C} \to S$ be a flat, projective family of genus $g$ prestable curves. Let $\mathcal{F}$ be an $S$-flat torsion free coherent sheaf on $\mathcal{C}$, each having Hilbert polynomial $P$ with respect to a fixed choice of an $S$-ample line bundle on $\mathcal{C}$. Then there exists an open and closed subscheme $S_r \subset S$ such that, if $g : T \to S$ is any morphism of schemes, then $g$ factors through $S_r \subset S$ if and only if for all geometric points $t \in T$, the sheaf $\mathcal{F}_{g(t)}$ is of uniform rank $r$ over the curve $\mathcal{C}_{g(t)}$.
\end{lemma}

\begin{proof}
	This is essentially the content of \cite[Lemma 8.1.1]{pand}.
\end{proof}

Fix a stability parameter $\sigma \in \mathfrak{S}$. If $\sigma$ is degenerate, let $\phi' = \phi_{\sigma}' : R \to R' = R_{\sigma}'$ be the projection map described in Section \ref{section: comparison of semistability degenerate}, and let $\sigma'$ be the corresponding positive stability parameter. By Theorem \ref{thm: relative moduli spaces} and Proposition \ref{prop: degenerate loci comparison} we have equalities
\begin{equation*}
	Z^{\sigma-ss} = \begin{cases} Q^{[\sigma-ss]} & \text{if } \sigma \text{ is positive,} \\ (\phi')^{-1}((Q')^{[\sigma'-ss]}) & \text{if } \sigma \text{ is degenerate}. \end{cases}
\end{equation*}

Both $Q^{[\sigma-ss]}$ and $(Q')^{[\sigma'-ss]}$ parametrise flat families of torsion-free sheaves on prestable curves, so by Lemma \ref{lem: flat families uniform rank} there are open and closed subschemes, respectively denoted $Q_r^{[\sigma-ss]}$ and $(Q_r')^{[\sigma'-ss]}$, parametrising all sheaves in the tautological family $\mathbb{F}$ which are of uniform rank $r$. Let
\begin{equation*}
	Z_r^{\sigma-ss} = \begin{cases} Q_r^{[\sigma-ss]} & \text{if } \sigma \text{ is positive,} \\ (\phi')^{-1}((Q_r')^{[\sigma'-ss]}) & \text{if } \sigma \text{ is degenerate}. \end{cases}
\end{equation*}
In both cases, the scheme $Z_r^{\sigma-ss}$ is open and closed in $Z^{\sigma-ss}$. 

\subsection{Stacks of $\sigma$-semistable Sheaves as Quotient Stacks} In the case where $\sigma$ is positive, $Z_r^{\sigma-ss} = Q_r^{[\sigma-ss]}$ gives a smooth presentation of both $\overline{\J}ac(\sigma)$ and $\overline{\J}(\sigma)$, exhibiting both of these as quotient stacks.

\begin{prop} \label{prop: quotient stack description 1}
	Suppose $\sigma \in \mathfrak{S}$ is positive. Then there is an isomorphism of stacks over $\overline{\M}_{g,n}(X, \beta)$
	$$ \overline{\J}ac(\sigma) := \overline{\J}ac_{g,n,d,r}^{ss}(X, \beta)(\LL_{\sigma}) \cong [Z_r^{\sigma-ss} / \hat{G} \times PGL(W)]. $$
\end{prop}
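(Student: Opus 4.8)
The plan is to identify both stacks with the quotient stack by constructing mutually inverse morphisms, exploiting the universal properties established in Sections~\ref{section: functorial approach} and~\ref{section: GRT construction}. First I would recall the meaning of the two sides. An object of $\overline{\J}ac(\sigma)$ over a scheme $S$ is a family of stable maps $(\pi : \mathcal{C} \to S; \sigma_1,\dots,\sigma_n; f)$ together with an $S$-flat, fibrewise $\LL_\sigma$-semistable, torsion-free sheaf $\F$ of relative degree $d$ and uniform rank $r$. By Lemma~\ref{lem: boundedness result} and our choice of $m_0$, every such $\F$ is fibrewise $(m_0, \underline{\LL'})$-regular, hence $(m_1, \underline{\LL'})$-regular. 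The data of the stable map is classified by a morphism $S \to \overline{\M}_{g,n}(X,\beta)$; after passing to an \'etale (or smooth) cover I may lift this to $S \to I$, since $I \to \overline{\M}_{g,n}(X,\beta)$ is a $GL(W)$-torsor onto its image in the appropriate sense (two lifts differ by a $GL(W)$-valued function). Pulling back the universal family $\UU I \to I$ recovers $\mathcal{C} \to S$, and then $\Hom(T_S, \F)$ (formed relative to the pulled-back sheaves $\LL_i'$) is, by Proposition~\ref{prop: embedding 2}(1), a flat family of $A$-modules of dimension vector $\underline d$, i.e.\ a morphism $S \to R$; by Proposition~\ref{prop: embedding 2}(2) it factors through $R_\tau^{[reg]}$ and then through $Q$, and by Theorem~\ref{thm: comparison theorem}(1) (applicable since $\sigma$ is positive and bounded with respect to $\tau$ by Lemma~\ref{lem: boundedness result}) the semistability of $\F$ is equivalent to the semistability of $\Hom(T_S,\F)$, so the morphism lands in $Q^{[\sigma-ss]}$; since $\F$ has uniform rank $r$, Lemma~\ref{lem: flat families uniform rank} places it in $Q_r^{[\sigma-ss]} = Z_r^{\sigma-ss}$. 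The residual ambiguity — the choice of lift $S\to I$ and the choice of basis trivialising $\Hom(T_S,\F)$ — is exactly the $\hat G \times PGL(W)$-ambiguity, so this construction descends to a morphism $\overline{\J}ac(\sigma) \to [Z_r^{\sigma-ss}/\hat G \times PGL(W)]$.

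Conversely, given a morphism $S \to Z_r^{\sigma-ss} \subset Q$, the tautological family $\mathbb M$ of $A$-modules pulls back, and $(\cdot) \otimes_A T$ produces an $S$-flat family $\mathbb F$ of $(m_1,\underline{\LL'})$-regular, uniform-rank-$r$, torsion-free sheaves of topological type $\tau$ on the pulled-back universal curve, which is fibrewise $\sigma$-semistable by Theorem~\ref{thm: comparison theorem}(1); composing $S \to Z_r^{\sigma-ss} \to I \to \overline{\M}_{g,n}(X,\beta)$ gives the underlying family of stable maps. This produces a morphism $[Z_r^{\sigma-ss}/\hat G \times PGL(W)] \to \overline{\J}ac(\sigma)$ once one checks equivariance: $\hat G$ acts on $R$ by change of basis on the $A$-modules, and since $(\cdot)\otimes_A T$ is adjoint to $\Hom(T,-)$, isomorphic $A$-modules give canonically isomorphic sheaves (the unit map $\iota^*\M \to \Hom(T, \iota^*\M \otimes_A T)$ of Proposition~\ref{prop: embedding 2}(2)(a) is an isomorphism on $Q$, so $\Hom(T,-)$ and $(\cdot)\otimes_A T$ are mutually inverse equivalences on this locus); while $PGL(W)$ acts by changing the projective embedding of the stable map, which does not change the underlying stable map. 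One then checks that the two composites are naturally isomorphic to the identities: in one direction this is the fact that $\Hom(T,-)$ and $(\cdot)\otimes_A T$ are inverse on regular sheaves of type $\tau$ (Proposition~\ref{prop: embedding 2}(2)(b), with $\E \cong g^*\M\otimes_A T$), in the other it is the reconstruction of the stable map together with the $A$-module from the sheaf, which again uses that the unit map is an isomorphism. The $\G_m$-rigidification issue does not arise here because we are comparing the \emph{unrigidified} stack $\overline{\J}ac(\sigma)$, whose objects are genuine sheaves (not sheaves up to tensoring by a line bundle from the base), with the quotient of a scheme parametrising honest $A$-modules.

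The main obstacle I anticipate is bookkeeping the descent along $I \to \overline{\M}_{g,n}(X,\beta)$ correctly: $I$ is not a scheme mapping \emph{isomorphically} onto $\overline{\M}_{g,n}(X,\beta)$, but rather $\overline{\M}_{g,n}(X,\beta) \cong [I/PGL(W)]$ (which follows from Theorem~\ref{thm: baldwin swinarski} together with the local universal property of the family $\UU I \to I$ recorded in Section~\ref{section: BS construction}), so one must phrase everything in terms of this quotient-stack presentation and verify that the $PGL(W)$-equivariance matches up with the $PGL(W)$-factor in $[Z_r^{\sigma-ss}/\hat G \times PGL(W)]$. Concretely, the cleanest route is probably to observe that $Z_r^{\sigma-ss} \to I$ is $\hat G$-invariant and $PGL(W)$-equivariant, hence induces $[Z_r^{\sigma-ss}/\hat G \times PGL(W)] \to [I/PGL(W)] = \overline{\M}_{g,n}(X,\beta)$, and then to show that over this base the relative stack $[Z_r^{\sigma-ss}/\hat G]$ (formed relative to $I$) represents the functor of fibrewise $\sigma$-semistable sheaves — this last assertion is exactly the relative, uniform-rank refinement of the Greb--Ross--Toma moduli functor description, and is where Propositions~\ref{prop: embedding 1}--\ref{prop: embedding 2} and Theorem~\ref{thm: comparison theorem} do all the work. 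A secondary, purely technical point is to make sure the open-and-closed decomposition cutting out uniform rank $r$ (Lemma~\ref{lem: flat families uniform rank}) is compatible with the group actions, which is immediate since the actions preserve the tautological family $\mathbb F$ fibrewise. The statement for the rigidified stack $\overline{\J}(\sigma)$ is not part of this proposition but follows afterward by taking $\G_m$-rigidifications of both sides, using that $\overline{\J}ac(\sigma) \to \overline{\J}(\sigma)$ is a $\G_m$-gerbe and that $[Z_r^{\sigma-ss}/\hat G \times PGL(W)] \fatslash \G_m \cong [Z_r^{\sigma-ss}/(\hat G \times PGL(W))/\Delta']$ for the appropriate central $\G_m = \Delta'$.
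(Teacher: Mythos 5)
Your proposal is correct and follows essentially the same route as the paper: the comparison morphism is built from the tautological family on $Z_r^{\sigma-ss}$, the lift of a family of stable maps to $I$ comes from locally trivialising $h_{\ast}\mathcal{N}^{10}$ (the paper does this Zariski-locally rather than étale-locally, and the relevant torsor is for $PGL(W)$ rather than $GL(W)$, but this is cosmetic), and the identification of the $\hat{G}\times PGL(W)$-ambiguity with isomorphisms of objects is exactly the paper's full-faithfulness check via the universal properties of $I$ and of $Q_r^{[\sigma-ss]}$ together with the $\Hom(T,-)\dashv(-\otimes_A T)$ adjunction. The only difference is presentational: the paper defines a single morphism out of the quotient prestack and verifies that its stackification is fully faithful and essentially surjective, whereas you package the same content as a pair of mutually inverse functors.
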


\begin{proof}
	Let $[Z_r^{\sigma-ss} / \hat{G} \times PGL(W)]^{\mathrm{pre}}$ denote the quotient pre-stack associated to the $(\hat{G} \times PGL(W))$-action on $Z_r^{\sigma-ss} = Q_r^{[\sigma-ss]}$, so that $[Z_r^{\sigma-ss} / \hat{G} \times PGL(W)]$ is the stack associated to this CFG. Given a morphism of schemes $S \to Z_r^{\sigma-ss}$ over $I$, the pullback of the tautological family over $Q_r^{[\sigma-ss]}$ gives rise to a flat family of rank $r$, degree $d$, $\sigma$-semistable torsion-free coherent sheaves, over the family of $n$-pointed genus $g$ stable maps to $X$ embedded in $\PP(W) \times \PP^r$, obtained by pulling back the universal family $\phi_{\UU} : \UU I \to I$ to $S$, giving an object of $\overline{\J}ac(\sigma)(S)$. This defines a morphism $Z_r^{\sigma-ss} \to \overline{\J}ac(\sigma)$ of stacks over $\overline{\M}_{g,n}(X, \beta)$ which is easily seen to be invariant with respect to the action of $\hat{G} \times PGL(W)$, giving a morphism of CFGs
	$$ \Phi : [Z_r^{\sigma-ss} / \hat{G} \times PGL(W)]^{\mathrm{pre}} \to \overline{\J}ac(\sigma). $$
	By the uniqueness of stackifications, it suffices to show that the stackification $\tilde{\Phi}$ of $\Phi$ is an equivalence of CFGs.
	
	\emph{$\tilde{\Phi}$ is fully faithful:} Since stackification is fully faithful, it suffices to show that $\Phi$ is fully faithful. Suppose we are given morphisms $\gamma_i : S \to Z_r^{\sigma-ss}$ ($i = 1, 2$) such that the resulting families of semistable sheaves over stable maps
	$$ (h_i : \mathcal{C}_i \to S; \sigma_1^i, \dots, \sigma_n^i; f_i : \mathcal{C}_i \to X; \F_i) \in \overline{\J}ac(\sigma)(S) $$
	are isomorphic. Fix such an isomorphism; that is, fix an isomorphism of $S$-schemes $g : \mathcal{C}_1 \to \mathcal{C}_2$ (which is compatible with the sections $\sigma_j^i$ and the morphisms to $X$) and fix an isomorphism of coherent sheaves $\alpha : \F_1 \to g^{\ast} \F_2$.
	
	The isomorphism $g$ induces an isomorphism of invertible sheaves $\mathcal{N}_1 \cong g^{\ast} \mathcal{N}_2$, where
	$$ \mathcal{N}_i = \omega_{\mathcal{C}_i/S}(\sigma_1^i(S) + \cdots + \sigma_n^i(S)) \otimes f_i^{\ast}(\OO_X(3)). $$
	As the restriction of $\LL_i^{10}$ to a geometric fibre is non-special, the sheaves $(h_i)_{\ast}(\LL_i^{10})$ are locally free of rank $e - g + 1 = \dim W$. By the universal property of $I$ there are line bundles $M_i \in \Pic S$ and isomorphisms $$ \OO_{\PP(W)}(1) \otimes \OO_{\PP^b}(1) \otimes \OO_{\mathcal{C}_i} \cong (\omega_{\mathcal{C}_i/S}(\sigma_1^i(S) + \cdots + \sigma_n^i(S)))^{10} \otimes f_i^{\ast}(\OO_X(31)) \otimes \OO_{\mathcal{C}_i} \otimes h_i^{\ast} M_i $$
	of sheaves on $\mathcal{C}_i$. Pull back these isomorphisms along the projection $p_W : \PP(W) \times \PP^b \to \PP(W)$ then push forward to $S$ to obtain isomorphisms
	$$ (h_i)_{\ast}(\OO_{\PP(W)}(1) \otimes \OO_{p_W(\mathcal{C}_i)}) \cong (h_i)_{\ast}(\mathcal{N}_i^{10}) \otimes M_i. $$
	As each point of $I$ corresponds (after composing with the projection $p_W$) to a non-degenerate curve in $\PP(W)$, the natural morphisms
	$$ H^0(\PP(W), \OO_{\PP(W)}(1)) \otimes \OO_S \to (h_i)_{\ast}(\OO_{\PP(W)}(1) \otimes \OO_{p_W(\mathcal{C}_i)}) $$
	are both isomorphisms. The sequence of isomorphisms
	\begin{align*}
		H^0(\PP(W), \OO_{\PP(W)}(1)) \otimes \OO_S &\cong (h_1)_{\ast}(\OO_{\PP(W)}(1) \otimes \OO_{p_W(\mathcal{C}_1)}) \\
		&\cong (h_1)_{\ast}(\mathcal{N}_1^{10}) \otimes M_1 \\
		&\cong (h_2)_{\ast}(\mathcal{N}_2^{10}) \otimes M_1 \\
		&\cong (h_2)_{\ast}(\OO_{\PP(W)}(1) \otimes \OO_{p_W(\mathcal{C}_2)}) \otimes M_1 \otimes M_2^{-1} \\
		&\cong H^0(\PP(W), \OO_{\PP(W)}(1)) \otimes M_1 \otimes M_2^{-1}
	\end{align*}
yields an automorphism of $S \times \PP(W)$ sending the curve $p_W(\mathcal{C}_1)$ to $p_W(\mathcal{C}_2)$, along with a corresponding $S$-valued point of $PGL(W)$, depending only on $g$, $\gamma_1$ and $\gamma_2$. 
	
	On the other hand, the isomorphism $\alpha : \F_1 \to g^{\ast} \F_2$ gives rise to an isomorphism of $A$-modules $\mathcal{H}om(T, \F_1) \cong \mathcal{H}om(T, g^{\ast} \F_2)$. By Proposition \ref{prop: embedding 2}, the $A$-module $\mathcal{H}om(T, \F_1)$ is canonically isomorphic to the pullback of the tautological family $\mathbb{M}|_{Z_r^{\sigma-ss}}$ along the morphism $\gamma_1$, and similarly for the $A$-module $\mathcal{H}om(T, g^{\ast} \F_2)$, so there is an induced isomorphism between the (free) $A$-modules $\gamma_1^{\ast}(\mathbb{M}|_{Z_r^{\sigma-ss}})$ and $\gamma_2^{\ast}(\mathbb{M}|_{Z_r^{\sigma-ss}})$. Such an isomorphism determines an $S$-valued point of $\hat{G}$, depending only on $\alpha$, $\gamma_1$ and $\gamma_2$. It follows that $\Phi$ is fully faithful.
	
	\emph{$\tilde{\Phi}$ is essentially surjective:} Take an object
	\begin{equation} \label{eqn: essential surjectivity}
		(h : \mathcal{C} \to S; \sigma_1, \dots, \sigma_n; f : \mathcal{C} \to X; \F) \in \overline{\J}ac(\sigma)(S).
	\end{equation}
	Let $\mathcal{N}$ be the invertible sheaf
	$$ \mathcal{N} = \omega_{\mathcal{C}/S}(\sigma_1(S) + \cdots + \sigma_n(S)) \otimes f^{\ast}(\OO_X(3)), $$
	and consider the locally free sheaf $h_{\ast} \mathcal{N}^{10}$. From our choice of natural numbers $m_i$ the sheaves $h_{\ast}(\F \otimes (\LL_i')_S^{m_1})$, $h_{\ast}(\F \otimes (\LL_j')_S^{m_2})$ ($i, j = 1, \dots, k$) appearing in the $A$-module $\mathcal{H}om(T,\F)$ are also all locally free $\OO_S$-modules. Choose an open cover $\{S_i\}$ of $S$ which simultaneously trivialises the sheaves
	\begin{equation} \label{eqn: list of sheaves}
		 h_{\ast} \mathcal{N}^{10}, \quad h_{\ast}(\F \otimes (\LL_i')_S^{m_1}), \ \ i = 1, \dots, k, \quad h_{\ast}(\F \otimes (\LL_j')_S^{m_2}), \ \ j = 1, \dots, k.
	\end{equation}
	As $\overline{\J}ac(\sigma)$ is a stack, it suffices to show that the restriction of the object \eqref{eqn: essential surjectivity} to each $S_i \subset S$ lies in the essential image of $Z_r^{\sigma-ss} \to \overline{\mathcal{J}}ac(\sigma)$. Without loss of generality, we may therefore assume that all of the sheaves in \eqref{eqn: list of sheaves} are free $\OO_S$-modules.
	
	Pick an isomorphism $W \otimes \OO_S \cong h_{\ast} \mathcal{N}^{10}$. Pulling back to $\mathcal{C}$, we obtain a quotient
	\begin{equation*}
		W \otimes \OO_{\mathcal{C}} \cong h^{\ast} h_{\ast} \LL^{10} \to \LL^{10} \to 0.
	\end{equation*}
	This quotient embeds $\mathcal{C}$ inside $S \times \PP(W)$ as a family of non-degenerate curves; in turn $\mathcal{C}$ embeds inside $S \times \PP(W) \times \PP^b$ via the graph of $f : \mathcal{C} \to X \subset \PP^b$. From the universal property of $I$ we obtain a morphism $\gamma' : S \to I$. On the other hand, by Theorem \ref{thm: comparison theorem} the free $A$-module $\mathcal{H}om(T,\F)$ is (fibrewise) $\sigma$-semistable, so from the universal property of $Z_r^{\sigma-ss} = Q_r^{[\sigma-ss]}$ we obtain a morphism $\gamma : S \to Z_r^{\sigma-ss}$ which lifts the morphism $\gamma' : S \to I$. Consequently $\tilde{\Phi}$ is essentially surjective.
\end{proof}

\begin{cor} \label{cor: quotient stack description 2}
	Suppose $\sigma \in \mathfrak{S}$ is positive. Then there is an isomorphism of stacks over $\overline{\M}_{g,n}(X, \beta)$
	$$ \overline{\J}(\sigma) := \overline{\J}_{g,n,d,r}^{ss}(X, \beta)(\LL_{\sigma}) \cong [Z_r^{\sigma-ss} / G \times PGL(W)]. $$
	In particular, each of the stacks $\overline{\J}(\sigma)$ is algebraic and of finite type over $\C$.
\end{cor}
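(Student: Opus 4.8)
The plan is to derive this directly from Proposition \ref{prop: quotient stack description 1} by passing to $\G_m$-rigidifications. Recall that $\overline{\J}(\sigma) := \overline{\J}_{g,n,d,r}^{ss}(X,\beta)(\LL_{\sigma})$ is, by construction, the $\G_m$-rigidification of $\overline{\J}ac(\sigma) := \overline{\J}ac_{g,n,d,r}^{ss}(X,\beta)(\LL_{\sigma})$ along the central $\G_m$ scaling the sheaves, and that $G = \hat{G}/\Delta$ with $\Delta = \{(t\cdot\mathrm{id},\dots,t\cdot\mathrm{id}) : t \in \G_m\} \cong \G_m$ the subgroup acting trivially on $R$, hence on $Z_r^{\sigma-ss}$. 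Since $\Delta$ is central in $\hat{G}\times PGL(W)$ and acts trivially on $Z_r^{\sigma-ss}$, the quotient stack $[Z_r^{\sigma-ss}/\hat{G}\times PGL(W)]$ is a $\G_m$-gerbe over $[Z_r^{\sigma-ss}/G\times PGL(W)]$, realised as the $\G_m$-rigidification of the former along the central $\Delta$ in its inertia. So the corollary will follow once we know that the isomorphism of Proposition \ref{prop: quotient stack description 1} is compatible with these two rigidifications.

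First I would verify that compatibility, i.e. that the equivalence $\tilde{\Phi}$ constructed in the proof of Proposition \ref{prop: quotient stack description 1} carries the central $\G_m$ in the inertia of $\overline{\J}ac(\sigma)$ onto $\Delta$. This can be read off from the construction of $\tilde{\Phi}$: given a family of sheaves $\F$ over a family of stable maps, the scaling automorphism $\alpha = t\cdot\mathrm{id}_{\F}$ (for $t$ an invertible function) induces on $\Hom(T,\F)$ the automorphism acting as $t\cdot\mathrm{id}$ on every summand, which by Proposition \ref{prop: embedding 2} is precisely the point of $\hat{G}$ given by $t \in \Delta$. Granting this, applying the ($\G_m$-equivariant) rigidification functor to the equivalence of Proposition \ref{prop: quotient stack description 1} yields $\overline{\J}(\sigma) \cong [Z_r^{\sigma-ss}/G\times PGL(W)]$, as desired.

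For the final sentence of the statement, I would note that $Z_r^{\sigma-ss}$ is open and closed in $Z^{\sigma-ss} = Z\cap R^{\sigma-ss}$, with $Z = \bigcup_{\sigma'\in\mathfrak{S}}\overline{Q^{[\sigma'-ss]}}$ a closed subscheme of $R$, which is affine and of finite type over the quasi-projective $\C$-scheme $I$ (which is in turn quasi-projective over $\C$ by the Baldwin--Swinarski construction). Hence $Z_r^{\sigma-ss}$ is a quasi-projective scheme of finite type over $\C$, and since $G\times PGL(W)$ is a linear algebraic group over $\C$, the quotient stack $[Z_r^{\sigma-ss}/G\times PGL(W)]$ — and therefore $\overline{\J}(\sigma)$, via the isomorphism just established — is algebraic and of finite type over $\C$.

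The only step that needs genuine care is the compatibility of $\tilde{\Phi}$ with the two rigidifications, that is, the matching of the central $\G_m$ of $\overline{\J}ac(\sigma)$ with $\Delta$ under the explicit correspondence of Proposition \ref{prop: quotient stack description 1}; once this is in place, the remainder is formal bookkeeping with quotient stacks, central extensions, and $\G_m$-gerbes.
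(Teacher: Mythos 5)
Your proposal is correct and follows essentially the same route as the paper: the paper's proof likewise observes that under the isomorphism of Proposition \ref{prop: quotient stack description 1} the diagonal subgroup $\Delta \subset \hat{G}$ corresponds to the central $\G_m$ in the automorphism groups of the sheaves, and then obtains the corollary by rigidifying both sides with respect to these copies of $\G_m$. Your additional verification that scaling a sheaf by $t$ induces $t\cdot\mathrm{id}$ on every summand of $\Hom(T,\F)$, and your check of algebraicity via the quasi-projectivity of $Z_r^{\sigma-ss}$, are exactly the details the paper leaves implicit.
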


\begin{proof}
	Under the isomorphism of Proposition \ref{prop: quotient stack description 1}, the diagonal one-parameter subgroup $\G_m \cong \Delta \subset \hat{G}$ corresponds to the standard central copy of $\G_m$ in the automorphism groups of the sheaves appearing in $\overline{\J}ac(\sigma)$. The desired isomorphism is then obtained by rigidifying the isomorphism of Proposition \ref{prop: quotient stack description 1} with respect to these copies of $\G_m$.
\end{proof}

\begin{remark}
	We only claim that $Z^{\sigma-ss}$ gives a smooth presentation when $\sigma$ is positive. However, in the degenerate case the scheme $(Q_r')^{[\sigma'-ss]}$ can be used to exhibit $\overline{\J}ac(\sigma)$ and $\overline{\J}(\sigma)$ as quotient stacks, after replacing $\hat{G}$ and $G$ with $\hat{G}'$ and $G'$ respectively. 
\end{remark}

\subsection{The Good Moduli Spaces are GIT Quotients}

With the notation as above, we now show that the stack $\overline{\J}(\sigma)$ admits a projective good moduli space, which is a good quotient of the parameter space $Z_r^{\sigma-ss}$.

\begin{theorem} \label{thm: good moduli spaces as GIT quotients}
	Let $\sigma \in \mathfrak{S}$ be either positive or degenerate. The stack $\overline{\J}(\sigma)$ admits a projective good moduli space
	$$ \overline{J}(\sigma) = \overline{J}_{g,n,d,r}^{ss}(X, \beta)(\LL_{\sigma}), $$
	which is a coarse moduli space if all $\sigma$-semistable sheaves appearing in $\overline{\J}(\sigma)$ are $\sigma$-stable. Moreover, $\overline{J}(\sigma)$ is a good quotient of the scheme $Z_r^{\sigma-ss}$ under the action of the group $G \times SL(W)$:
	$$ \overline{J}(\sigma) \cong Z_r \sslash_{\theta_{\sigma}} G \times SL(W) := Z_r^{\sigma-ss} \sslash G \times SL(W). $$
    This good quotient coincides with the quotient of a relative (over $I$) GIT semistable locus for the action of $G \times SL(W)$ on $Z_r$.
\end{theorem}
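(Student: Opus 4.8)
The plan is to realise $\overline{J}(\sigma)$ as a GIT quotient by combining the quotient stack descriptions of Corollary \ref{cor: quotient stack description 2} and the remark following it with Proposition \ref{prop: further quotients}, which I would apply to the universal family $\phi_{\UU} : \UU I \to I$ with reductive group $H = SL(W)$, equivariant compactification $\overline{Y} = \overline{I}$, and ample linearisation $L = L_{m_W, m_b, m_{\mathrm{pts}}}$. The hypotheses of that proposition hold by the choices made in Section \ref{section: GIT construction setup}: Theorem \ref{thm: baldwin swinarski} supplies $\overline{I}^{ss}(L) = \overline{I}^s(L) = I$ with $I \sslash SL(W) \cong \overline{M}_{g,n}(X, \beta)$; the sheaves $\LL_i'$ are $\phi_{\UU}$-very ample and $SL(W)$-linearised; and $m_0, m_1, m_2$ were chosen so that Theorem \ref{thm: master space} applies. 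Proposition \ref{prop: further quotients} then yields a projective good quotient $Z^{\sigma-ss} \sslash (G \times SL(W)) = M_{\sigma, \tau} \sslash SL(W)$ together with a projective morphism $\hat{q}_\sigma : Z^{\sigma-ss} \sslash (G \times SL(W)) \to I \sslash SL(W) = \overline{M}_{g,n}(X, \beta)$. Since $G \times SL(W)$ is connected and $Z_r^{\sigma-ss}$ is open and closed in $Z^{\sigma-ss}$ (hence a union of connected components, hence invariant and saturated), I would restrict the good quotient to obtain a good quotient $Z_r^{\sigma-ss} \sslash (G \times SL(W))$ which is open and closed in $Z^{\sigma-ss} \sslash (G \times SL(W))$, and therefore projective over the projective scheme $\overline{M}_{g,n}(X, \beta)$, hence projective. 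This scheme is the candidate for $\overline{J}(\sigma)$.

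The next step is to identify it with the good moduli space of $\overline{\J}(\sigma)$. When $\sigma$ is positive, Corollary \ref{cor: quotient stack description 2} gives $\overline{\J}(\sigma) \cong [Z_r^{\sigma-ss} / (G \times PGL(W))]$; since the $(G \times SL(W))$-action on $Z_r^{\sigma-ss}$ factors through $G \times PGL(W)$ and $SL(W) \to PGL(W)$ is surjective, the graded rings of invariant sections defining the two $\mathbf{Proj}_I$-constructions coincide, so $Z_r^{\sigma-ss} \sslash (G \times PGL(W)) = Z_r^{\sigma-ss} \sslash (G \times SL(W))$, and Proposition \ref{prop: good moduli spaces and good quotients} identifies this common scheme with the good moduli space of $\overline{\J}(\sigma)$. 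When $\sigma$ is degenerate, I would instead use the description $\overline{\J}(\sigma) \cong [(Q_r')^{[\sigma'-ss]} / (G' \times PGL(W))]$ from the remark following Corollary \ref{cor: quotient stack description 2}, where $\sigma'$ is the associated positive parameter, $\phi' : R \to R'$ the projection of Section \ref{section: comparison of semistability degenerate}, and $G = G' \times G''$ with $G''$ acting trivially on $R'$. Since $Z_r^{\sigma-ss} = (\phi')^{-1}((Q_r')^{[\sigma'-ss]})$ and $\phi'$ is $G''$-invariant and $SL(W)$-equivariant, Proposition \ref{prop: degenerate loci comparison} restricts to a $(G' \times SL(W))$-equivariant isomorphism $Z_r^{\sigma-ss} \sslash G'' \stackrel{\simeq}{\to} (Q_r')^{[\sigma'-ss]}$; taking quotients in stages (valid since $G''$ is normal in $G \times SL(W)$ and $Z_r^{\sigma-ss} \to Z_r^{\sigma-ss} \sslash G''$ is a good quotient) gives
$$ Z_r^{\sigma-ss} \sslash (G \times SL(W)) = \bigl( Z_r^{\sigma-ss} \sslash G'' \bigr) \sslash (G' \times SL(W)) \cong (Q_r')^{[\sigma'-ss]} \sslash (G' \times SL(W)), $$
and arguing with $PGL(W)$ versus $SL(W)$ as before identifies the last scheme with the good moduli space of $\overline{\J}(\sigma)$, projective over $\overline{M}_{g,n}(X, \beta)$ by the first paragraph.

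For the coarse moduli space claim: if every $\sigma$-semistable sheaf appearing in $\overline{\J}(\sigma)$ is $\sigma$-stable, then, Gieseker-stable sheaves being simple, $\overline{\J}(\sigma)$ is an open substack of the Deligne-Mumford stack $\overline{\J}^{\mathrm{simp}}_{g,n,d,r}(X, \beta)$ (Proposition \ref{prop: algebraicity result}), hence itself Deligne-Mumford and locally Noetherian, so its good moduli space is a coarse moduli space by the fourth of the listed properties of good moduli spaces. The main obstacle I anticipate is the degenerate case: one has to verify carefully that passing to the uniform-rank-$r$ locus is compatible with the projection $\phi'$ and with the residual $(G' \times SL(W))$-action, so that Proposition \ref{prop: degenerate loci comparison} descends as claimed, and that the iterated quotient --- first by $G''$, then by $G' \times SL(W)$ --- is legitimate. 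The remaining points are routine bookkeeping with the constructions of Sections \ref{section: BS GRT constructions} and \ref{section: carrying out the construction}.
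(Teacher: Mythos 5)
Your proposal is correct and follows essentially the same route as the paper's proof: Proposition \ref{prop: further quotients} applied to $\phi_{\UU} : \UU I \to I$ (restricted to the open and closed locus $Z_r^{\sigma-ss}$) for existence and projectivity, the $SL(W)$/$PGL(W)$ identification plus Corollary \ref{cor: quotient stack description 2} and Proposition \ref{prop: good moduli spaces and good quotients} for the identification with the good moduli space, the reduction to $\sigma'$ and $G'$ in the degenerate case, and the Deligne-Mumford observation for the coarse moduli space claim. The only cosmetic difference is that you unwind Corollary \ref{cor: GRT degenerate setting} into Proposition \ref{prop: degenerate loci comparison} plus quotients in stages, which the paper packages as a single citation.
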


\begin{proof}
	First note that if all $\sigma$-semistable sheaves appearing in $\overline{\J}(\sigma)$ are $\sigma$-stable then $\overline{\J}(\sigma)$ is a Deligne-Mumford stack (cf. Section \ref{section: substacks defined by polarisations}), hence any good moduli space is automatically a coarse moduli space.
	
	Since $\overline{I}^{SL(W)-ss}(L) = \overline{I}^{SL(W-s)}(L) = I$, after first replacing $Z$ with the open and closed subscheme $Z_r$, the results of Propositions \ref{prop: further quotients} and \ref{prop: quotient of Z^{sigma-ss} is a quotient of a relative ss locus} are applicable to the morphism $Z_r \to I$. As such, there exists a $G \times SL(W)$-linearisation $\tilde{L}_{\sigma}$ on $Z_r$, an equality $Z_r^{\sigma-ss} = Z^{G \times SL(W)-ss}(\tilde{L}_{\sigma}/I)$ of semistable loci, and a projective good quotient $Z_r^{\sigma-ss} \sslash G \times SL(W)$ for the action of $G \times SL(W)$ on $Z_r^{\sigma-ss}$.
    
     Since the group $GL(W)$ acts on $Z_r^{\sigma-ss}$ through the quotient $PGL(W)$, there is a canonical identification of good quotients
    $$ Z_r^{\sigma-ss} \sslash G \times SL(W) \equiv Z_r^{\sigma-ss} \sslash G \times PGL(W). $$
    If $\sigma$ is positive, the isomorphism $\overline{J}(\sigma) \cong Z_r \sslash_{\theta_{\sigma}} G \times SL(W)$ follows by combining Corollary \ref{cor: quotient stack description 2} with Proposition \ref{prop: good moduli spaces and good quotients}. If instead $\sigma$ is degenerate, we apply Corollary \ref{cor: quotient stack description 2} with Proposition \ref{prop: good moduli spaces and good quotients} to $Z_r^{\sigma'-ss}$ in place of $Z_r^{\sigma-ss}$ and $G'$ in place of $G$, then apply Corollary \ref{cor: GRT degenerate setting} to obtain an isomorphism of good quotients
    $$ Z_r^{\sigma-ss} \sslash G \times SL(W) \cong Z_r^{\sigma'-ss} \sslash G' \times SL(W) \equiv Z_r^{\sigma'-ss} \sslash G' \times PGL(W). $$
    This completes the proof.
\end{proof}

\subsection{Closed Points of the Moduli Spaces} Continue to fix the stability condition $\sigma$. Let $\zeta$ be a stable map in $\overline{\M}_{g,n}(X, \beta)$, and let $[\zeta]$ be the corresponding point in the coarse moduli space $\overline{M}_{g,n}(X, \beta)$. Denote the pullback of $\overline{\J}(\sigma)$ along $\zeta : \spec \C \to \overline{\M}_{g,n}(X, \beta)$ as
$$ \overline{\J}_{\zeta}(\sigma) = \overline{\J}_{\zeta, d, r}^{ss}(\LL_{\sigma} |_{\zeta}). $$
Fix a point $j_0 \in I$ corresponding to an embedded stable map
$$ (C \subset \PP(W) \times \PP^b; x_1, \dots, x_n; \mathrm{pr}_{\PP^b} : C \to X \subset \PP^b) $$
whose underlying stable map is isomorphic to $\zeta$. If $(Z_r)_{j_0}^{\sigma-ss}$ denotes the pullback of the master space $Z_r^{\sigma-ss}$ along $j_0$, the good quotient $(Z_r)_{j_0}^{\sigma-ss} \sslash G$ is isomorphic to the good moduli space $\overline{J}_{\zeta}(\sigma)$ of $\overline{\J}_{\zeta}(\sigma)$, and is the moduli space of $\sigma$-semistable degree $d$, rank $r$ torsion-free sheaves over the nodal curve $C$ appearing in \cite{grebrosstoma} and \cite{grebrosstomamaster}. By Theorem \ref{thm: relative moduli spaces} and Corollary \ref{cor: GRT degenerate setting}, the closed points of $\overline{J}_{\zeta}(\sigma)$ are in 1-1 correspondence with S-equivalence classes of $\sigma$-semistable degree $d$, rank $r$, torsion-free sheaves on $C$. The automorphism group $\mathrm{Aut}(\zeta)$ of the stable map $\zeta$ acts on the moduli space $\overline{J}_{\zeta}(\sigma)$, with the action corresponding to taking pullbacks of sheaves.

\begin{prop}
	The fibre of the map $\overline{J}(\sigma) \to \overline{M}_{g,n}(X, \beta)$ over the closed point $[\zeta] \in \overline{M}_{g,n}(X, \beta)$ is isomorphic to the quotient $\overline{J}_{\zeta}(\sigma)/\mathrm{Aut}(\zeta)$.
\end{prop}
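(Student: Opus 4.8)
The plan is to work fibrewise with the GIT presentation established in the previous subsections. Recall that $\overline{J}(\sigma) \cong Z_r^{\sigma-ss} \sslash (G \times SL(W))$ by Theorem~\ref{thm: good moduli spaces as GIT quotients}, and that the map to $\overline{M}_{g,n}(X, \beta) \cong \overline{I} \sslash SL(W) = I / SL(W)$ is induced by the equivariant projection $Z_r^{\sigma-ss} \to I$ (coming from the structure morphism $R \to I$). Since we have the two-step quotient $Z_r^{\sigma-ss} \sslash (G \times SL(W)) = (Z_r^{\sigma-ss} \sslash G) \sslash SL(W) = M_{\sigma, \tau} \sslash SL(W)$, and since $M_{\sigma, \tau}$ is projective over $I$ with fibre over $j_0$ the GRT moduli space $(Z_r)_{j_0}^{\sigma-ss} \sslash G = \overline{J}_{\zeta}(\sigma)$, I would first invoke Proposition~\ref{prop: further quotients} directly: it says the fibre of $\hat{q}_{\sigma} : M_{\sigma, \tau} \sslash SL(W) \to I \sslash SL(W)$ over a closed point of $I \sslash SL(W)$ is $(M_{\sigma, \tau})_y / (SL(W))_y$, where $y \in I$ is any point lying over that orbit and $(SL(W))_y$ is its stabiliser.

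Next I would identify the ingredients of that formula in the present setting. Take $y = j_0$, the chosen point of $I$ mapping to $[\zeta]$. Then $(M_{\sigma, \tau})_{j_0} = \overline{J}_{\zeta}(\sigma)$ as recalled above. The key input is the identification of the stabiliser: by Baldwin--Swinarski (the discussion preceding Theorem~\ref{thm: baldwin swinarski}), two points of $I$ lie in the same $SL(W)$-orbit if and only if the corresponding embedded stable maps have isomorphic underlying stable maps, and more precisely the stabiliser in $PGL(W)$ of $j_0$ is canonically the automorphism group $\mathrm{Aut}(\zeta)$ of the stable map $\zeta$ (an automorphism of $\zeta$ induces, via the very ample bundle $(\omega_C(\sum x_i) \otimes f^*\OO_{\PP^b}(3))^{10}$, a linear automorphism of $W = H^0$, well-defined up to scalar). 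Since $SL(W)$ acts on $Z_r^{\sigma-ss}$ and on $I$ through $PGL(W)$, the stabiliser $(SL(W))_{j_0}$ surjects onto $\mathrm{Stab}_{PGL(W)}(j_0) = \mathrm{Aut}(\zeta)$ with kernel the finite central subgroup $SL(W) \cap \G_m$; since this central kernel acts trivially on $M_{\sigma, \tau}$, the quotient $(M_{\sigma, \tau})_{j_0} / (SL(W))_{j_0}$ coincides with $\overline{J}_{\zeta}(\sigma) / \mathrm{Aut}(\zeta)$. Combining, the fibre of $\overline{J}(\sigma) \to \overline{M}_{g,n}(X,\beta)$ over $[\zeta]$ is $\overline{J}_{\zeta}(\sigma)/\mathrm{Aut}(\zeta)$.

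Finally, I would check that the abstract isomorphism $(M_{\sigma, \tau})_{j_0}/(SL(W))_{j_0} \cong \overline{J}_{\zeta}(\sigma)/\mathrm{Aut}(\zeta)$ is the "expected" one, i.e.\ that the $\mathrm{Aut}(\zeta)$-action transported from $PGL(W)$ agrees with the action by pullback of sheaves described just before the proposition. This is where a small amount of care is needed: one traces through the construction of $M_{\sigma, \tau}$ via the functor $\Hom(T, -)$ and the tautological family $\mathbb{M}$ on $R$, observing that an automorphism $\phi$ of $\zeta$ acts on $(Z_r)_{j_0}^{\sigma-ss}$ by sending the point classifying a module $\mathcal{H}om(T, \F)$ to the one classifying $\mathcal{H}om(T, \phi^*\F) \cong \phi^*\mathcal{H}om(T,\F)$, compatibly with the base-change functoriality of $\Hom(T,-)$ from Proposition~\ref{prop: embedding 2}; hence on closed points (S-equivalence classes of $\sigma$-semistable sheaves on $C$, by Theorem~\ref{thm: relative moduli spaces} and Corollary~\ref{cor: GRT degenerate setting}) it is exactly $[\F] \mapsto [\phi^*\F]$. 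I expect this last compatibility check — matching the linear-algebra action coming from Proposition~\ref{prop: further quotients}/Baldwin--Swinarski with the geometric pullback action — to be the main obstacle, though it is essentially bookkeeping given the functoriality already set up; everything else is a formal consequence of Proposition~\ref{prop: further quotients} and the GRT fibre description.
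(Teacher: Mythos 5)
Your proposal is correct and follows essentially the same route as the paper: apply the final statement of Proposition \ref{prop: further quotients} to reduce to identifying the image of the stabiliser $SL(W)_{j_0}$ in $PGL(W)$ with $\mathrm{Aut}(\zeta)$, which the paper obtains from the fully-faithfulness argument in the proof of Proposition \ref{prop: quotient stack description 1} specialised to $S = \spec \C$ (the same ``automorphism of $\zeta$ induces a projective-linear automorphism of $W$'' reasoning you sketch). Your additional remarks on the finite central kernel and on matching the $PGL(W)$-action with pullback of sheaves are sensible bookkeeping that the paper leaves implicit.
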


\begin{proof}
	Applying the final statement of Proposition \ref{prop: further quotients}, it is enough to show that the image of the isotropy subgroup $SL(W)_{j_0}$ of the point $j_0$ under the $SL(W)$ action on $I$ inside $PGL(W)$ can be identified with $\mathrm{Aut}(\zeta)$. But this essentially follows from the argument given within the proof of Proposition \ref{prop: quotient stack description 1} used to show that the morphism $\Phi$ is fully faithful (take the base $S$ to be $\spec \C$).
\end{proof}

This yields the following description of the closed points of $\overline{J}(\sigma)$, analogous to the description arising from \cite[Theorem 8.2.1]{pand}.

\begin{prop} \label{prop: description of the closed points}
	Let $\zeta$ be a stable map in $\overline{\M}_{g,n}(X, \beta)$, and let $[\zeta]$ be the corresponding point in the coarse moduli space $\overline{M}_{g,n}(X, \beta)$. Denote the underlying nodal curve of $\zeta$ by $C$. Then the closed points of $\overline{J}(\sigma)$ lying over the closed point $[\zeta]$ are in 1-1 correspondence with equivalence classes of $\sigma$-semistable rank $r$, degree $d$, torsion free sheaves $E$ over $C$, where sheaves $E$ and $E'$ are equivalent if and only if the Jordan-Hölder factors of these sheaves differ by an automorphism of the stable map $\zeta$. \qed
\end{prop}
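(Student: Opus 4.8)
The plan is to combine the preceding proposition with the description of the closed points of $\overline{J}_\zeta(\sigma)$ already recorded above. By that proposition, the fibre of $\overline{J}(\sigma) \to \overline{M}_{g,n}(X,\beta)$ over $[\zeta]$ is the quotient $\overline{J}_\zeta(\sigma)/\mathrm{Aut}(\zeta)$. Since $\zeta$ is a stable map, $\mathrm{Aut}(\zeta)$ is a finite group, so this is a geometric quotient of the projective scheme $\overline{J}_\zeta(\sigma)$ by a finite group; in particular its closed points are precisely the $\mathrm{Aut}(\zeta)$-orbits of closed points of $\overline{J}_\zeta(\sigma)$. It therefore suffices to identify these orbits.

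First I would invoke Theorem \ref{thm: relative moduli spaces} (together with Corollary \ref{cor: GRT degenerate setting} when $\sigma$ is degenerate), which identifies the closed points of $\overline{J}_\zeta(\sigma)$ with S-equivalence classes of $\sigma$-semistable rank $r$, degree $d$ torsion-free sheaves on $C$; two such sheaves $E$, $E'$ define the same point exactly when $\mathrm{gr}(E) \cong \mathrm{gr}(E')$, i.e.\ when $E$ and $E'$ have the same Jordan--H\"older factors up to isomorphism. Next I would unwind the $\mathrm{Aut}(\zeta)$-action: an element $\phi \in \mathrm{Aut}(\zeta)$ induces an automorphism of $C$ compatible with the map to $X$, and since $\LL_\sigma|_\zeta$ is the restriction of $\LL_\sigma$ along the $\mathrm{Aut}(\zeta)$-equivariant map $C \to \UU\overline{\M}_{g,n}(X,\beta)$, we have $\phi^\ast(\LL_\sigma|_\zeta) \cong \LL_\sigma|_\zeta$. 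Hence $\phi^\ast$ preserves $\sigma$-semistability and, being exact, carries a Jordan--H\"older filtration of $E'$ to one of $\phi^\ast E'$, so that $\mathrm{gr}(\phi^\ast E') \cong \phi^\ast \mathrm{gr}(E')$; moreover, as noted in the text preceding the statement, the induced action of $\phi$ on the closed points of $\overline{J}_\zeta(\sigma)$ is $[E'] \mapsto [\phi^\ast E']$. Putting these together, two sheaves $E$, $E'$ give the same closed point of the fibre if and only if there is $\phi \in \mathrm{Aut}(\zeta)$ with $\mathrm{gr}(E) \cong \phi^\ast\mathrm{gr}(E')$, i.e.\ the Jordan--H\"older factors of $E$ and $E'$ differ by an automorphism of $\zeta$; this is the asserted equivalence relation.

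I expect the only real subtlety to be the claim that the $\mathrm{Aut}(\zeta)$-action on $\overline{J}_\zeta(\sigma)$ is literally the one induced by pullback of sheaves --- so that it acts on S-equivalence classes by $[E'] \mapsto [\phi^\ast E']$ --- together with the invariance of the $\sigma$-stability condition under $\phi^\ast$; both follow from the functoriality of the Greb--Ross--Toma construction and from $\LL_\sigma$ being pulled back from the universal curve, but should be spelled out carefully. Everything else in the argument is formal.
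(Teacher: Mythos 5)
Your proposal is correct and is essentially the argument the paper intends: the result is stated with an immediate \qed precisely because it follows by combining the preceding fibre description $\overline{J}_{\zeta}(\sigma)/\mathrm{Aut}(\zeta)$ with the identification (via Theorem \ref{thm: relative moduli spaces} and Corollary \ref{cor: GRT degenerate setting}) of closed points of $\overline{J}_{\zeta}(\sigma)$ with S-equivalence classes, together with the observation that $\mathrm{Aut}(\zeta)$ acts by pullback of sheaves. Your additional care about $\phi^{\ast}$ preserving $\LL_{\sigma}|_{\zeta}$, semistability and Jordan--H\"older factors fills in exactly the details the paper leaves implicit.
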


\begin{remark}
	It follows from Proposition \ref{prop: description of the closed points} that the open substack $\overline{\J}^{s}(\sigma) \subset \overline{\J}(\sigma)$ parametrising $\sigma$-stable sheaves is saturated with respect to the good moduli space morphism $\pi_{\sigma} : \overline{\J}(\sigma) \to \overline{J}(\sigma)$, hence $\overline{J}^{s}(\sigma) := \pi_{\sigma}(\overline{\J}^{s}(\sigma))$ is an open subscheme of $\overline{J}(\sigma)$ which is a coarse moduli space for $\overline{\J}^{s}(\sigma)$.
\end{remark}

\section{Wall and Chamber Decompositions} \label{section: wall chamber decompositions}

We now turn to proving Theorem \ref{thm: Theorem B}, which concerns what happens as the parameter $\sigma$ varies in the space of stability conditions.

\subsection{Existence of Stability Decompositions} Here we closely follow \cite[Section 4]{grebrosstoma}. We keep the notation of Section \ref{section: carrying out the construction}. Fix $\pi_{\UU}$-ample $\Q$-invertible sheaves $\LL_1, \dots, \LL_k$ and let $\Sigma' = \{ \sigma \in (\Q^{\geq 0})^k \setminus \{0\} : \sum \sigma_i = 1 \}$ denote the resulting space of stability conditions.

\begin{defn}
	A \emph{chamber structure} on $\Sigma'$ is given by a collection $\{W_i : i \in I\}$ of \emph{walls}, where each wall is either a real hypersurface in $\Sigma'_{\R} = \{ \sigma \in (\R^{\geq 0})^k \setminus \{0\} : \sum \sigma_i = 1 \}$ (the \emph{proper walls}), or all of $\Sigma'_{\R}$. A subset $\mathcal{C} \subset \Sigma'_{\R}$ which is maximally connected with respect to the property that for each $i \in I$, either $\mathcal{C} \subset W_i$ or $\mathcal{C} \cap W_i = \emptyset$, is called a \emph{chamber}. The chamber structure is said to be \emph{rational linear} if each proper wall $W_j$ is a rational hyperplane, and \emph{proper} if all walls are proper.
\end{defn}

Consider the collection of sheaves $\mathcal{S}$ consisting of all sheaves $F$ for which there exists $\sigma \in \Sigma'$ and a $\sigma$-semistable, degree $d$, rank $r$, torsion-free coherent sheaf $E$ over some stable map $\zeta = (C; x_1, \dots, x_n; f : C \to X)$ in $\overline{\M}_{g,n}(X, \beta)$, such that $F$ is a saturated subsheaf of $E$ and such that for some $\sigma' \in \Sigma'$ (not necessarily equal to $\sigma$), one has $\mu_{\sigma'}(F) \geq \mu_{\sigma'}(E)$.

\begin{lemma}[cf. \cite{grebrosstoma}, Lemma 4.5]
	The collection $\mathcal{S}$ is a bounded family of sheaves.
\end{lemma}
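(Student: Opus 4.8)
The plan is to bound the Hilbert polynomials of the members of $\mathcal{S}$ and to realise them all as subsheaves of a single bounded family, so that Grothendieck's theorem on relative $\quot$-schemes applies. Since rescaling a stability parameter does not change the notion of semistability, Lemma \ref{lem: boundedness result} shows that the family $\mathcal{E}$ of all $\sigma$-semistable (for some $\sigma \in \Sigma'$), degree $d$, rank $r$, torsion-free sheaves $E$ over stable maps in $\overline{\M}_{g,n}(X,\beta)$ is bounded; fix a finite-type $\C$-scheme $T$ together with a $T$-flat coherent sheaf $\mathcal{E}_T$ on the (projective over $T$) pullback $\mathcal{C}_T \to T$ of the universal curve, through whose fibres every $E \in \mathcal{E}$ occurs. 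Recall also that $\overline{\M}_{g,n}(X,\beta)$ involves only finitely many topological types of source curve $C$, so the number $\rho$ of components, the number of nodes, and the degrees $\deg_{C_j}(L_i)$ (where $L_i$ denotes the pullback of $\LL_i$ to $C$) all lie in finite sets; as we have replaced the $\LL_i$ by genuine very ample invertible sheaves, moreover $\deg_{C_j}(L_i) \geq 1$ throughout.

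The key step is to bound $\chi(F)$ for $F \in \mathcal{S}$. For $\tau \in \Sigma'$ and a torsion-free sheaf $G$ on such a curve $C$ write $N_\tau(G) = \sum_{i,j} \tau_i\, r_j(G)\,\deg_{C_j}(L_i)$, so that $\mu_\tau(G) = \chi(G)/N_\tau(G)$. By the previous paragraph there is an absolute constant $B$ with $N_\tau(G) \leq B$ (using $\sum_i \tau_i = 1$ and $r_j(G) \leq r$), and $N_\tau(G) \geq 1$ whenever $G$ is a nonzero torsion-free subsheaf of some $E \in \mathcal{E}$, since then $G$ has rank $\geq 1$ on at least one component and each $\deg_{C_j}(L_i) \geq 1$. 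Furthermore $\chi(E) = d + r(1-g)$ is fixed and $N_\tau(E) = r\sum_i \tau_i \deg_C(L_i)$ is bounded away from $0$ and $\infty$, so $\mu_\tau(E)$ ranges over a bounded set as $\tau$ and $E$ vary.

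Now take $F \in \mathcal{S}$, say $F \subset E$ with $E$ being $\sigma$-semistable and $\mu_{\sigma'}(F) \geq \mu_{\sigma'}(E)$ for some $\sigma, \sigma' \in \Sigma'$. The $\sigma$-semistability of $E$ gives $\chi(F) = \mu_\sigma(F) N_\sigma(F) \leq \mu_\sigma(E) N_\sigma(F)$, which, examining the two signs of $\mu_\sigma(E)$ and using $1 \leq N_\sigma(F) \leq B$, is at most an absolute constant; and $\mu_{\sigma'}(F) \geq \mu_{\sigma'}(E)$ gives $\chi(F) = \mu_{\sigma'}(F) N_{\sigma'}(F) \geq \mu_{\sigma'}(E) N_{\sigma'}(F)$, bounded below in the same way. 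Hence $\chi(F)$ takes only finitely many values. By Seshadri's formula \eqref{eqn: MGS nodal curve} (applied with a single polarisation) one has $\chi(F \otimes L_i^t) = \chi(F) + t\sum_j r_j(F)\deg_{C_j}(L_i)$, so the Hilbert polynomial of $F$ with respect to any $L_i$ is determined by $\chi(F)$, the multi-rank $(r_j(F))_j \in \{0, \dots, r\}^\rho$, and the topological type of $C$, each of which now ranges over a finite set.

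To conclude, note that the Hilbert polynomial of $E$ (with respect to $L_1$, say) is likewise constrained to a finite set, so every $F \in \mathcal{S}$ arises as the kernel of a surjection from a fibre of $\mathcal{E}_T$ onto a quotient whose Hilbert polynomial $P$ (with respect to the relatively ample $L_1$) lies in a fixed finite set. Each relative $\quot$-scheme $\quot_{\mathcal{C}_T/T}(\mathcal{E}_T, P)$ is projective over $T$, hence of finite type over $\C$, and carries a flat universal subsheaf; taking the finite union of the resulting families shows $\mathcal{S}$ is bounded. The only delicate point in all of this is the sign bookkeeping in the estimate on $\chi(F)$; everything else is a routine appeal to boundedness for $\quot$-schemes.
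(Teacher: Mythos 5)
Your proof is correct and follows essentially the same route as the paper, which simply cites Lemma 4.5 of \cite{grebrosstoma} together with Lemma \ref{lem: boundedness result}: bound the (multi-)slopes of the destabilising saturated subsheaves from above via semistability of $E$ and from below via the condition $\mu_{\sigma'}(F) \geq \mu_{\sigma'}(E)$, then conclude by a Grothendieck/Quot-scheme boundedness argument. Your write-up correctly specialises that argument to nodal curves (where the Hilbert polynomials are linear, so bounding $\chi(F)$ and the multirank suffices), and the sign analysis using $1 \leq N_\tau(F) \leq B$ is sound.
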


\begin{proof}
	Suppose $E \supset F \in \mathcal{S}$ with $E$ $\sigma$-semistable and $\mu^{\sigma'}(F) \geq \mu^{\sigma'}(E)$. By Lemma 2.11 of \emph{loc. cit.} there exists $j \in \{1, \dots, k\}$ with $\mu^{\LL_j}(F) \geq \mu^{\sigma'}(E)$. Since by Lemma \ref{lem: boundedness result} $E$ varies in a bounded family, and since the quotient $E/F$ is torsion-free or zero, the boundedness of $\mathcal{S}$ follows by applying Grothendieck's lemma \cite[Lemma 1.7.9]{huybrechts_lehn}, as in the proof of \cite[Lemma 4.5]{grebrosstoma}.
\end{proof}

Now suppose $E \supset F \in \mathcal{S}$ is a sheaf over some stable map $\zeta$ in $\overline{\M}_{g,n}(X, \beta)$. Suppose the underlying curve $C$ of $\zeta$ has irreducible components $C_1, \dots, C_{\rho}$. Set
$$ W_{F, \zeta} := \left\{ \sigma \in \Sigma'_{\R} : \sum_{i=1}^k \sigma_i \left( r\chi(F) \sum_{j=1}^{\rho}  \deg_{C_j} L_i - \chi(E) \sum_{j=1}^{\rho} r_j(F) \deg_{C_j} L_i \right) = 0 \right\}. $$
Each $W_{F, \zeta}$ is empty, the whole of $\Sigma'$ or a rational hyperplane in $\Sigma'_{\R}$. In the first case the wall $W_{F, \zeta}$ is discarded. As the wall $W_{F,\zeta}$ depends only on the Euler characteristic and the multirank of $F$, and as $\mathcal{S}$ is bounded, letting $F \in \mathcal{S}$ vary yields finitely many possibilities for the proper walls which arise in this way, each of which is a rational hyperplane. This gives rise to a finite rational linear chamber structure on the set $\Sigma'$.

\begin{lemma} \label{lem: stability inside a chamber}
	Suppose $E$ is a degree $d$, rank $r$, torsion-free coherent sheaf over some stable map $\zeta$ which is semistable with respect to some $\sigma \in \Sigma'$, and suppose $F$ is a saturated subsheaf of $E$ with $F \in \mathcal{S}$. Suppose additionally that $\sigma', \sigma'' \in \Sigma'$ lie in a common chamber $\mathcal{C} \subset \Sigma'$. Then $\mu_{\sigma'}(F) < (\leq) \ \mu_{\sigma'}(E)$ if and only if $\mu_{\sigma''}(F) < (\leq) \ \mu_{\sigma''}(E)$.
\end{lemma}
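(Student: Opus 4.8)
The plan is to rewrite both slope inequalities, the one at $\sigma'$ and the one at $\sigma''$, as statements about the sign of one and the same rational linear functional on $\Sigma'_{\R}$ — precisely the functional cutting out the wall $W_{F,\zeta}$ — and then to conclude using the defining property of a chamber together with the connectedness of $\mathcal{C}$.

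First I would unwind the comparison $\mu_{\sigma}(F)\ (\leq)\ \mu_{\sigma}(E)$ using the nodal-curve slope formula \eqref{eqn: multi slope stability nodal curve}. Since $E$ has uniform rank $r$, one has $r_j(E)=r$ for every component, so $\sum_{j}r_j(E)\deg_{C_j}L_i=r\deg_C L_i$; since $F$ is a nonzero torsion-free sheaf, one has $r_j(F)\geq 1$ for every $j$; and since each $L_i$ pulls back to an ample line bundle on $C$, one has $\deg_{C_j}L_i>0$. Hence both denominators $\sum_{i,j}\sigma_i r_j(\cdot)\deg_{C_j}L_i$ are strictly positive for every $\sigma\in\Sigma'_{\R}$, and clearing them turns $\mu_{\sigma}(F)<(\leq)\ \mu_{\sigma}(E)$ into $\ell_{F,\zeta}(\sigma)<(\leq)\ 0$, where
$$ \ell_{F,\zeta}(\sigma)=\sum_{i=1}^{k}\sigma_i\left( r\chi(F)\sum_{j=1}^{\rho}\deg_{C_j}L_i-\chi(E)\sum_{j=1}^{\rho}r_j(F)\deg_{C_j}L_i\right) $$
is exactly the linear functional whose vanishing locus on $\Sigma'_{\R}$ is $W_{F,\zeta}$.

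Since $F\in\mathcal{S}$, the set $W_{F,\zeta}$ is one of the sets produced in the construction of the chamber structure: it is either discarded (that is, $\ell_{F,\zeta}$ never vanishes on $\Sigma'_{\R}$), or all of $\Sigma'_{\R}$ (that is, $\ell_{F,\zeta}\equiv 0$), or a proper wall. If $\ell_{F,\zeta}\equiv 0$ then $\mu_{\sigma'}(F)=\mu_{\sigma'}(E)$ and $\mu_{\sigma''}(F)=\mu_{\sigma''}(E)$, so the $\leq$-statement holds and the $<$-statement fails on both sides. If $\ell_{F,\zeta}$ is nowhere zero on $\Sigma'_{\R}$, then, being continuous on the connected (indeed convex) set $\Sigma'_{\R}$, it has constant sign there. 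If $W_{F,\zeta}$ is a proper wall, then since $\sigma',\sigma''$ lie in a common chamber $\mathcal{C}$, the definition of a chamber forces either $\mathcal{C}\subset W_{F,\zeta}$, in which case $\ell_{F,\zeta}(\sigma')=\ell_{F,\zeta}(\sigma'')=0$ and we argue as in the identically-zero case, or $\mathcal{C}\cap W_{F,\zeta}=\emptyset$, in which case $\ell_{F,\zeta}$ is continuous and nowhere zero on the connected set $\mathcal{C}$, hence of constant sign there. In the two cases where $\ell_{F,\zeta}$ has constant nonzero sign on a connected set containing both $\sigma'$ and $\sigma''$, the values $\ell_{F,\zeta}(\sigma')$ and $\ell_{F,\zeta}(\sigma'')$ share that sign; since neither vanishes, the $<$- and $\leq$-versions of the inequality coincide and are simultaneously true (if the sign is negative) or simultaneously false (if positive). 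This proves the equivalence in all cases. The only genuinely delicate point is the bookkeeping in the first step: checking that the functional coming out of the slope inequality is literally the one defining $W_{F,\zeta}$, which is exactly where the hypothesis that $E$ has uniform rank $r$ is used.
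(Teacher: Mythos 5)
Your proof is correct and is essentially the argument the paper gives: both reduce the slope comparison to the sign of the linear functional $h = \ell_{F,\zeta}$ cutting out $W_{F,\zeta}$ and then use that this sign cannot change on the chamber $\mathcal{C}$ (the paper phrases this as a contradiction via convexity of $\mathcal{C}$ and the intermediate value theorem, you phrase it directly via connectedness and the trichotomy in the definition of a chamber, which also lets you treat the case $\mathcal{C}\subset W_{F,\zeta}$ explicitly). One small inaccuracy: a saturated subsheaf $F$ need not satisfy $r_j(F)\geq 1$ on \emph{every} component (it may vanish generically on some components); what you actually need, and what does hold since $F$ is nonzero and pure of dimension one, is that $r_j(F)\geq 1$ for \emph{some} $j$, which already gives the positivity of the denominator $\sum_{i,j}\sigma_i r_j(F)\deg_{C_j}L_i$.
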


\begin{proof}
	The proof is very similar to that of \cite[Lemma 4.6]{grebrosstoma}. We deal with the non-strict inequality case; the strict inequality case follows by essentially the same argument. Swapping $\sigma'$ and $\sigma''$ if necessary, suppose for a contradiction that
	$$ \mu_{\sigma'}(F) \leq \mu_{\sigma'}(E) \quad \mathrm{and} \quad \mu_{\sigma''}(F) > \mu_{\sigma''}(E). $$
	Consider the linear function $h : \Sigma'_{\R} \to \R$ given by
	$$ h(\sigma) = \sum_{i=1}^k \sigma_i \left( r\chi(F) \sum_{j=1}^{\rho}  \deg_{C_j} L_i - \chi(E) \sum_{j=1}^{\rho} r_j(F) \deg_{C_j} L_i \right). $$
	By definition $W_{F,\zeta}$ is the zero set of $h$. By hypothesis, we have $h(\sigma') \leq 0$ and $h(\sigma'') > 0$. Since $\sigma'$ and $\sigma''$ are both contained in the chamber $\mathcal{C}$, it follows that $\mathcal{C} \cap W_{F,\zeta} = \emptyset$. On the other hand there exists $\sigma''' \in \Sigma'$ on the line segment joining $\sigma'$ and $\sigma''$ with $h(\sigma''') = 0$. As the chamber $\mathcal{C}$ is convex, $\mathcal{C}$ must contain this line segment, which implies $\mathcal{C} \cap W_{F, \zeta} \not = \emptyset$, a contradiction.
\end{proof}

\begin{prop} \label{prop: wall chamber structure}
	There exists a rational linear chamber structure on $\Sigma'$ cut out by finitely many walls, such that if $\sigma', \sigma'' \in \Sigma'$ belong to the same chamber, then:
	\begin{enumerate}
		\item if $E$ is a degree $d$, rank $r$, torsion-free coherent sheaf over a stable map in $\overline{\M}_{g,n}(X,\beta)$, then $E$ is $\sigma'$-semistable if and only if $E$ is $\sigma''$-semistable; and
		\item if $E$ and $E'$ are degree $d$, rank $r$, torsion-free coherent sheaves over the same stable map in $\overline{\M}_{g,n}(X,\beta)$ which are both semistable with respect to both $\sigma'$ and $\sigma''$, then $E$ and $E'$ are S-equivalent with respect to $\sigma'$ if and only if they are S-equivalent with respect to $\sigma''$.
	\end{enumerate}
	Moreover, if $\sigma \in \Sigma'$ does not lie in any wall then all $\sigma$-semistable sheaves are $\sigma$-stable.
\end{prop}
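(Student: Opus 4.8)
The plan is to take the chamber structure on $\Sigma'$ to be the one cut out by the walls $W_{F,\zeta}$ constructed above, as $F$ ranges over $\mathcal{S}$ and $\zeta$ over the finitely many topological types of stable maps carrying such an $F$; the discussion preceding Lemma~\ref{lem: stability inside a chamber} already shows this is a finite rational linear chamber structure. With this choice, (1), (2) and the last assertion will all follow from Lemma~\ref{lem: stability inside a chamber} once one records the elementary fact that over a nodal curve every term $E_i$ of a Jordan--H\"older filtration of a $\sigma$-semistable degree $d$, rank $r$, torsion-free sheaf $E$ is saturated and lies in $\mathcal{S}$: one has $\mu_\sigma(E_i) = \mu_\sigma(E)$, and the saturation $\widetilde{E_i}$ has the same multirank as $E_i$ (hence the same denominator in the slope), while $\chi(\widetilde{E_i}) \geq \chi(E_i)$ and $\mu_\sigma(\widetilde{E_i}) \leq \mu_\sigma(E) = \mu_\sigma(E_i)$ by $\sigma$-semistability of $E$, forcing $\chi(\widetilde{E_i}) = \chi(E_i)$ and so $\widetilde{E_i} = E_i$; then $E_i$ is a saturated subsheaf of $E$ with $\mu_\sigma(E_i) \geq \mu_\sigma(E)$, i.e. $E_i \in \mathcal{S}$. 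I will also use repeatedly that in a short exact sequence of coherent sheaves on $C$ the slope $\mu_{\sigma}$ of the middle term is a weighted mean with positive weights of the slopes of the sub and the quotient.

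For (1): assume $E$ is $\sigma'$-semistable, with $\sigma'$ and $\sigma''$ in a common chamber, and suppose for contradiction that $E$ is not $\sigma''$-semistable. Taking a $\sigma''$-destabilising subsheaf of $E$ and passing to its saturation $F$ (which does not decrease $\mu_{\sigma''}$), one obtains a nonzero proper saturated subsheaf $F \subsetneq E$ with $\mu_{\sigma''}(F) > \mu_{\sigma''}(E)$, whence $F \in \mathcal{S}$. The contrapositive of the strict form of Lemma~\ref{lem: stability inside a chamber} then gives $\mu_{\sigma'}(F) > \mu_{\sigma'}(E)$, contradicting $\sigma'$-semistability of $E$. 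Interchanging $\sigma'$ and $\sigma''$ gives (1).

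For (2) I would first reduce to showing $\mathrm{gr}_{\sigma'}(E) \cong \mathrm{gr}_{\sigma''}(E)$ whenever $E$ is semistable for both $\sigma'$ and $\sigma''$ in a common chamber, since then S-equivalence of two such sheaves for $\sigma'$ and for $\sigma''$ are both equivalent to isomorphism of associated gradeds. Fixing a Jordan--H\"older filtration $0 = E_0 \subset \cdots \subset E_\ell = E$ for $\sigma'$ with $\sigma'$-stable quotients $F_i = E_i/E_{i-1}$, the first paragraph gives $E_i \in \mathcal{S}$ and $\mu_{\sigma'}(E_i) = \mu_{\sigma'}(E)$; applying both the strict and the non-strict forms of Lemma~\ref{lem: stability inside a chamber} yields $\mu_{\sigma''}(E_i) = \mu_{\sigma''}(E)$, and hence $\mu_{\sigma''}(F_i) = \mu_{\sigma''}(E)$ by additivity. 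The remaining point is that each $F_i$ is $\sigma''$-stable: given a nonzero proper saturated subsheaf $G \subsetneq F_i$, its preimage $G'$ satisfies $E_{i-1} \subset G' \subset E_i$ and, since $E_i$ is saturated in $E$, is saturated in $E$; if $\mu_{\sigma''}(G) \geq \mu_{\sigma''}(F_i) = \mu_{\sigma''}(E)$ then the weighted-mean relation gives $\mu_{\sigma''}(G') \geq \mu_{\sigma''}(E)$, so $G' \in \mathcal{S}$, and Lemma~\ref{lem: stability inside a chamber} gives $\mu_{\sigma'}(G') \geq \mu_{\sigma'}(E)$; together with $\mu_{\sigma'}(G') \leq \mu_{\sigma'}(E_i) = \mu_{\sigma'}(E)$ this forces $\mu_{\sigma'}(G') = \mu_{\sigma'}(E)$, and then the weighted-mean relation forces $\mu_{\sigma'}(G) = \mu_{\sigma'}(F_i)$, contradicting $\sigma'$-stability of $F_i$. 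Thus the filtration is also a Jordan--H\"older filtration for $\sigma''$, and uniqueness of the associated graded gives $\mathrm{gr}_{\sigma''}(E) \cong \bigoplus_i F_i \cong \mathrm{gr}_{\sigma'}(E)$; (2) follows.

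Finally, for the last assertion: if $\sigma \in \Sigma'$ lies in no wall but some degree $d$, rank $r$, torsion-free sheaf $E$ over a stable map $\zeta$ is strictly $\sigma$-semistable, pick a nonzero proper saturated subsheaf $F \subsetneq E$ with $\mu_\sigma(F) = \mu_\sigma(E)$; then $F \in \mathcal{S}$, and clearing denominators in $\mu_\sigma(F) = \mu_\sigma(E)$ (using $r_j(E) = r$ for all $j$) shows precisely that the linear form defining $W_{F,\zeta}$ vanishes at $\sigma$, so $\sigma \in W_{F,\zeta}$; this wall is nonempty, hence was not discarded, contradicting the hypothesis. I expect the main obstacle to be the $\sigma''$-stability check for the graded pieces $F_i$ in (2): this is the one place where one cannot argue purely with $E$ and its subsheaves but must push a subsheaf of a graded piece back up to a saturated subsheaf of $E$ lying in $\mathcal{S}$ before Lemma~\ref{lem: stability inside a chamber} can be applied, and keeping the slopes and saturations straight there is the crux.
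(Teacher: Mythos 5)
Your proof is correct and follows essentially the same route as the paper, which simply observes that the proof of Proposition 4.2 of Greb--Ross--Toma carries over once Lemma \ref{lem: stability inside a chamber} is substituted for their Lemma 4.6, and handles the final assertion exactly as you do; your write-up just makes that transferred argument explicit (same chamber structure, saturation of Jordan--H\"older terms, and the check that a $\sigma'$-Jordan--H\"older filtration is also one for $\sigma''$). One small label slip: in part (1) the implication $\mu_{\sigma''}(F) > \mu_{\sigma''}(E) \Rightarrow \mu_{\sigma'}(F) > \mu_{\sigma'}(E)$ is the contrapositive of the \emph{non-strict} ($\leq$) form of Lemma \ref{lem: stability inside a chamber}, not of the strict form, whose contrapositive only yields $\mu_{\sigma'}(F) \geq \mu_{\sigma'}(E)$ and hence no contradiction with $\sigma'$-semistability.
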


\begin{defn}
	 We refer to this chamber structure on $\Sigma'$ as the \emph{stability chamber decomposition} (with respect to $\LL_1, \dots, \LL_k$), and to the corresponding chambers as \emph{stability chambers}.
\end{defn}

\begin{proof}[Proof of Proposition \ref{prop: wall chamber structure}.]
	The proof of the first two assertions proceeds along very similar lines to the proof of Proposition 4.2 of \emph{loc. cit.} Suppose $\sigma', \sigma'' \in \Sigma'$ lie in the same chamber. Let $E$ be a $\sigma'$-semistable, degree $d$, uniform rank $r$, torsion-free coherent sheaf over a stable map in $\overline{\mathcal{M}}_{g,n}(X, \beta)$, and let $F \subset E$ be a saturated subsheaf. If $\mu^{\sigma''}(F) < \mu^{\sigma''}(E)$ then $F$ does not destabilise $E$; otherwise $E \supset F \in \mathcal{S}$ and so $\mu^{\sigma''}(F) \leq \mu^{\sigma''}(E)$ by Lemma \ref{lem: stability inside a chamber}. As such, $E$ is also $\sigma''$-semistable; this establishes the first assertion.
		
	To establish the second assertion, suppose $E$ is strictly $\sigma'$-semistable, so that there exists a saturated subsheaf $F \subset E$ with $\mu^{\sigma'}(F) = \mu^{\sigma'}(E)$. By Lemma \ref{lem: stability inside a chamber}, for any such subsheaf $F$ we have $\mu^{\sigma''}(F) = \mu^{\sigma''}(E)$. In particular, a maximal length Jordan--H\"older filtration of $E$ with respect to $\sigma'$ must also be a maximal length Jordan--H\"older filtration of $E$ with respect to $\sigma''$ and vice versa, so $\mathrm{gr}_{\sigma'}(E) \cong \mathrm{gr}_{\sigma''}(E)$; this proves the second assertion.
	
	It remains to show that if $\sigma \in \Sigma'$ is not contained in any wall $W_{F,\zeta}$ then all $\sigma$-semistable sheaves are $\sigma$-stable. However, if $E$ is a $\sigma$-semistable degree $d$, uniform rank $r$ torsion-free coherent sheaf over the stable map $\zeta$ then for all saturated subsheaves $F \subset E$ we must have $\mu^{\sigma}(F) < \mu^{\sigma}(E)$, which implies that $E$ is $\sigma$-stable.
\end{proof}

\subsection{The Mumford-Thaddeus Principle} As a consequence of the master space construction of Section \ref{section: carrying out the construction} and the existence of a finite stability decomposition on the space of stability conditions $\Sigma'$, the moduli spaces $\overline{J}(\sigma)$ are always related by finite sequences of Thaddeus flips (cf. Definition \ref{defn: thaddeus flip}).

\begin{theorem} \label{thm: mumford thaddeus principle}
	With the notation as in Section \ref{section: GIT construction setup}, let $\sigma_0, \sigma_1 \in \Sigma'$ be stability parameters defined with respect to the relatively ample $\Q$-invertible sheaves $\LL_1, \dots, \LL_k$. Let $\sigma_t = (1-t)\sigma_0 + t\sigma_1$ for $t \in \Q \cap [0,1]$. Then the good moduli spaces $\overline{J}(\sigma_0)$ and $\overline{J}(\sigma_1)$ are related by a finite number of Thaddeus flips of the form
	\begin{equation} \label{eqn: thaddeus flip crossing a wall}
		\begin{tikzcd}
			{\overline{J}(\sigma_{t_i})} &&&& {\overline{J}(\sigma_{t_{i+1}})} \\
			\\
			&& {\overline{J}(\sigma_{t_i'})}
			\arrow[from=1-1, to=3-3]
			\arrow[from=1-5, to=3-3]
		\end{tikzcd}
	\end{equation}
	for some $t_i, t_i' \in \Q \cap [0,1]$. In particular, $\overline{J}(\sigma_0)$ and $\overline{J}(\sigma_1)$ are related by a finite number of Thaddeus flips through moduli spaces of rank $r$, degree $d$, torsion-free coherent sheaves over stable maps in $\overline{\M}_{g,n}(X, \beta)$.
\end{theorem}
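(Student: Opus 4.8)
The plan is to run the variation-of-GIT argument of \cite{grebrosstomamaster} on the master space $Z_r = Z_{r,\mathfrak{S}}$ of Section \ref{section: carrying out the construction}, exploiting the fact that every moduli space $\overline{J}(\sigma)$ with $\sigma$ in a fixed finite set $\mathfrak{S}$ is a GIT quotient of one and the same scheme $Z_r$ by one and the same group $K = G \times SL(W)$. First I would cut the segment into finitely many pieces: the line $t \mapsto \sigma_t$ meets the finitely many rational linear walls of the stability chamber decomposition (Proposition \ref{prop: wall chamber structure}) in finitely many points; if it meets none, then $\sigma_0$ and $\sigma_1$ lie in a common chamber and $\overline{J}(\sigma_0) \cong \overline{J}(\sigma_1)$ by Proposition \ref{prop: wall chamber structure}, so assume otherwise and let $0 \le s_1 < \cdots < s_l \le 1$ be the corresponding parameter values (rational, since the walls are rational hyperplanes and $\sigma_0, \sigma_1$ are rational). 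I would then pick rational numbers $0 = t_0 < t_1 < \cdots < t_N = 1$ so that each $s_j$ equals a unique $t_i' \in [t_i, t_{i+1}]$ and the open intervals $(t_i, t_i')$, $(t_i', t_{i+1})$ contain no $s_j$; a step with no wall in it contributes an isomorphism $\overline{J}(\sigma_{t_i}) \cong \overline{J}(\sigma_{t_{i+1}})$ by Proposition \ref{prop: wall chamber structure}, and when $\sigma_0$ or $\sigma_1$ itself lies on a wall the first or last $t_i'$ may coincide with an endpoint. Collecting all these parameters into a single finite $\mathfrak{S} \subset \Sigma'$ and forming $Z_r$ and $K$ as in Section \ref{section: carrying out the construction}, Theorem \ref{thm: good moduli spaces as GIT quotients} gives $\overline{J}(\sigma) = Z_r^{\sigma-ss}\sslash K$ for all $\sigma \in \mathfrak{S}$, with $Z_r^{\sigma-ss}$ the GIT semistable locus for the $K$-linearisation on $Z_r$ determined by $\sigma$. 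It then suffices, for each $i$, to exhibit a Thaddeus flip relating $\overline{J}(\sigma_{t_i})$ and $\overline{J}(\sigma_{t_{i+1}})$ through $\overline{J}(\sigma_{t_i'})$.

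The heart of the argument is the inclusion of semistable loci
$$ Z_r^{\sigma_{t_i}-ss}\ \subseteq\ Z_r^{\sigma_{t_i'}-ss}\ \supseteq\ Z_r^{\sigma_{t_{i+1}}-ss} $$
of $K$-invariant open subschemes of $Z_r$. By the construction of Section \ref{section: carrying out the construction}, the locus $Z_r^{\sigma-ss}$ consists — whether $\sigma$ is positive (when $Z_r^{\sigma-ss} = Q_r^{[\sigma-ss]}$) or degenerate (via Theorem \ref{thm: comparison theorem}(1), Proposition \ref{prop: degenerate loci comparison} and Lemma \ref{lem: flat families uniform rank}) — exactly of the quiver data attached to degree $d$, uniform rank $r$, torsion-free sheaves over stable maps in $\overline{\M}_{g,n}(X,\beta)$ which are fibrewise $\sigma$-semistable. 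Hence it is enough to prove the pointwise statement that every such sheaf $E$ over a stable map $\zeta$ which is $\sigma_{t_i}$-semistable is also $\sigma_{t_i'}$-semistable, and likewise with $\sigma_{t_{i+1}}$ in place of $\sigma_{t_i}$. This is a small strengthening of Lemma \ref{lem: stability inside a chamber}: given a nonzero saturated proper subsheaf $F \subset E$, either $\mu_\sigma(F) < \mu_\sigma(E)$ for all $\sigma \in \Sigma'$ (and there is nothing to check), or $F$ lies in the bounded family $\mathcal{S}$ and the linear form $h_{F,\zeta}$ cutting out the wall $W_{F,\zeta}$ (the form written $h$ in the proof of Lemma \ref{lem: stability inside a chamber}) is available. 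Restricted to the line $t \mapsto \sigma_t$ it is an affine function of $t$ with no sign change on $[t_i, t_i')$ — a sign change would produce a zero, forcing $\sigma_t$ onto the wall $W_{F,\zeta}$ for some $t \in (t_i, t_i')$, contrary to our choice of subdivision — so by continuity $h_{F,\zeta}(\sigma_{t_i'})$ has the same sign as $h_{F,\zeta}(\sigma_{t_i})$ or vanishes. Since $E$ is $\sigma_{t_i}$-semistable, $\mu_{\sigma_{t_i}}(F) \le \mu_{\sigma_{t_i}}(E)$, i.e. $h_{F,\zeta}(\sigma_{t_i}) \le 0$, whence $h_{F,\zeta}(\sigma_{t_i'}) \le 0$ and $\mu_{\sigma_{t_i'}}(F) \le \mu_{\sigma_{t_i'}}(E)$. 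As $F$ was arbitrary, $E$ is $\sigma_{t_i'}$-semistable.

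With the inclusions in hand, the universal property of good quotients yields morphisms $\psi_- : \overline{J}(\sigma_{t_i}) \to \overline{J}(\sigma_{t_i'})$ and $\psi_+ : \overline{J}(\sigma_{t_{i+1}}) \to \overline{J}(\sigma_{t_i'})$ arising from the respective inclusions of semistable loci. Taking the master space to be $R = Z_r$, the reductive group to be $K$, the stability parameter $\sigma_0$ to be $\sigma_{t_i'}$ and $\sigma_\pm$ to be $\sigma_{t_i}, \sigma_{t_{i+1}}$, this is precisely a Thaddeus flip of $\overline{J}(\sigma_{t_i})$ and $\overline{J}(\sigma_{t_{i+1}})$ through $\overline{J}(\sigma_{t_i'})$ as defined in the Introduction, of the shape \eqref{eqn: thaddeus flip crossing a wall}. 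Concatenating these over $i = 0, \dots, N-1$, inserting the isomorphisms for the wall-free steps and using $\sigma_{t_0} = \sigma_0$, $\sigma_{t_N} = \sigma_1$, relates $\overline{J}(\sigma_0)$ and $\overline{J}(\sigma_1)$ by finitely many flips of the stated form. The final assertion of the theorem is then automatic, since each intermediate space $\overline{J}(\sigma_{t_i'}) = \overline{J}_{g,n,d,r}^{ss}(X,\beta)(\LL_{\sigma_{t_i'}})$ is, by definition, a good moduli space of rank $r$, degree $d$, torsion-free sheaves over stable maps in $\overline{\M}_{g,n}(X,\beta)$.

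The main obstacle is the middle step — showing that the GIT semistable locus in the master space can only enlarge when one moves onto a wall. The conceptual content reduces to the elementary sign computation above once one has observed that $Z_r^{\sigma-ss}$ is exactly the fibrewise $\sigma$-semistability locus, but some care is required: one must confirm that every subsheaf capable of realising a wall lies in the bounded family $\mathcal{S}$ (so that the relevant walls $W_{F,\zeta}$ form the finite arrangement of Section \ref{section: wall chamber decompositions}), and that in the degenerate case the comparison of loci really does descend through the truncated quiver of Proposition \ref{prop: degenerate loci comparison}. Granting these, the production of the flips in the last step is formal.
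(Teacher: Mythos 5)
Your proposal is correct and follows essentially the same route as the paper: subdivide the segment using the finite rational linear wall arrangement of Proposition \ref{prop: wall chamber structure}, collect representatives into a finite set $\mathfrak{S}$, form the master space $Z_r = Z_{r,\mathfrak{S}}$ so that every $\overline{J}(\sigma_t)$ is a GIT quotient of it, and obtain each flip from the inclusions $Z_r^{\sigma_{t_i}-ss} \subseteq Z_r^{\sigma_{t_i'}-ss} \supseteq Z_r^{\sigma_{t_{i+1}}-ss}$. The sign argument you spell out for the linear forms $h_{F,\zeta}$ is exactly what the paper compresses into ``taking limits in the inequalities \eqref{eqn: multi slope stability nodal curve}'', so your write-up is a slightly more detailed version of the same proof.
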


\begin{proof}
	The proof proceeds along similar lines to that of \cite[Theorem 5.2]{grebrosstomamaster}. If $\sigma_0$ and $\sigma_1$ define the same stability condition then there is nothing to prove. Suppose instead that $\sigma_0$ and $\sigma_1$ lie in different stability chambers. Without loss of generality, we may assume that the $\LL_i$ are genuine relatively very ample invertible sheaves. Let $\mathfrak{S} \subset \Sigma' \cap \{\sigma_t : t \in \Q \cap [0,1]\}$ be a set of representative stability conditions for each of the stability chambers the line segment $\sigma_t$ intersects, with exactly one representative in $\mathfrak{S}$ for each such chamber; for the chambers containing $\sigma_0$ and $\sigma_1$, we choose these stability conditions as representatives. As the stability decomposition is cut out by finitely many walls, the set $\mathfrak{S}$ is finite.
	
	As in Section \ref{section: GIT construction setup} form the master space $Z = Z_{\mathfrak{S}}$; the moduli spaces $\overline{J}(\sigma_t)$ obtained as $t \in [0,1]$ varies are all GIT quotients of this master space. Every time a wall $W_i$ separating chambers $\mathcal{C}_i$ and $\mathcal{C}_{i+1}$ with respective representative stability conditions $\sigma_{t_i}$ and $\sigma_{t_{i+1}}$ is crossed by $\sigma_t$, taking limits in the inequalities \eqref{eqn: multi slope stability nodal curve} give inclusions
	$$ Z_r^{\sigma_{t_i}-ss} \subset Z_r^{\sigma_{t_i'}-ss} \supset Z_r^{\sigma_{t_{i+1}}-ss}. $$
Here $\sigma_{t_i'}$ is the stability condition between $\sigma_{t_i}$ and $\sigma_{t_{i+1}}$ lying on the wall $W_i$. The moduli spaces $\overline{J}(\sigma_{t_i})$ and $\overline{J}(\sigma_{t_{i+1}})$ are then related by the Thaddeus flip\footnote{This is a Thaddeus flip in the sense of Definition \ref{defn: thaddeus flip}, since the above loci are relative GIT semistable loci, cf. Theorem \ref{thm: good moduli spaces as GIT quotients}.} \eqref{eqn: thaddeus flip crossing a wall} through the moduli space $\overline{J}(\sigma_{t_i'})$. As there are finitely many wall crossings between $\sigma_0$ and $\sigma_1$, we are done.
\end{proof}

\bibliographystyle{acm}
\bibliography{cujrefs.bib}	

\begin{thebibliography}{10}

\bibitem{abramovichcortivistoli}
{\sc Abramovich, D., Corti, A., and Vistoli, A.}
\newblock {Twisted Bundles and Admissible Covers}.
\newblock {\em Comm. Alg. 31}, 8 (2003), 3547--3618.

\bibitem{abreupacinitrop}
{\sc Abreu, A., and Pacini, M.}
\newblock {The Universal Tropical Jacobian and the Skeleton of the Esteves' Universal Jacobian}.
\newblock {\em Proc. Lond. Math. Soc. 120}, 3 (2020), 328--369.

\bibitem{abreupacini}
{\sc Abreu, A., and Pacini, M.}
\newblock {The Resolution of the Universal Abel Map via Tropical Geometry and Applications}.
\newblock {\em Adv. Math. 378\/} (2021), 107520.

\bibitem{abreupagani}
{\sc Abreu, A., and Pagani, N.}
\newblock {Wall-Crossing of Universal Brill-Noether Classes}.
\newblock {\em arXiv preprint arXiv:2303.16836\/} (2023).

\bibitem{alexeev}
{\sc Alexeev, V.}
\newblock {Compactified Jacobians and Torelli Map}.
\newblock {\em Pub. Res. Inst. Math. 40}, 4 (2004), 1241--1265.

\bibitem{alpergood}
{\sc Alper, J.}
\newblock {Good Moduli Spaces for Artin Stacks}.
\newblock {\em Ann. Inst. Fourier 63}, 6 (2013), 2349--2402.

\bibitem{altmankleiman}
{\sc Altman, A., and Kleiman, S.}
\newblock {Compactifying the Picard Scheme}.
\newblock {\em Adv. Math. 35}, 1 (1980), 50--112.

\bibitem{alvarezconsulking}
{\sc {\'A}lvarez-C{\'o}nsul, L., and King, A.}
\newblock {A Functorial Construction of Moduli of Sheaves}.
\newblock {\em Inv. Math. 168}, 3 (2007), 613--666.

\bibitem{baldwinpositive}
{\sc Baldwin, E.}
\newblock {A GIT Construction of Moduli Spaces of Stable Maps in Positive Characteristic}.
\newblock {\em J. Lond. Math. Soc. 78}, 1 (2008), 107--124.

\bibitem{baldwinswinarski}
{\sc Baldwin, E., and Swinarski, D.}
\newblock {A Geometric Invariant Theory Construction of Moduli Spaces of Stable Maps}.
\newblock {\em Int. Math. Res. Not. 2008\/} (2008).

\bibitem{behrendgw}
{\sc Behrend, K.}
\newblock {Gromov-Witten Invariants in Algebraic Geometry}.
\newblock {\em Inv. Math. 127}, 3 (1997), 601--617.

\bibitem{behrendmanin}
{\sc Behrend, K., and Manin, Y.}
\newblock {Stacks of Stable Maps and Gromov-Witten Invariants}.
\newblock {\em Duke Math. J. 85}, 1 (1996), 1--60.

\bibitem{bfmv}
{\sc Bini, G., Felici, F., Melo, M., and Viviani, F.}
\newblock {\em {Geometric Invariant Theory for Polarized Curves}}.
\newblock Lecture Notes in Mathematics 2122. Springer, 2014.

\bibitem{caporaso}
{\sc Caporaso, L.}
\newblock {A Compactification of the Universal Picard Variety over the Moduli Space of Stable Curves}.
\newblock {\em J. Am. Math. Soc. 7}, 3 (1994), 589--660.

\bibitem{caporasoneron}
{\sc Caporaso, L.}
\newblock {N{\'e}ron Models and Compactified Picard Schemes over the Moduli Stack of Stable Curves}.
\newblock {\em Am. J. Math. 130}, 1 (2008), 1--47.

\bibitem{caporasochrist}
{\sc Caporaso, L., and Christ, K.}
\newblock {Combinatorics of Compactified Universal Jacobians}.
\newblock {\em Adv. Math. 346\/} (2019), 1091--1136.

\bibitem{casalainakassvivianilocal}
{\sc Casalaina-Martin, S., Kass, J.~L., and Viviani, F.}
\newblock {The Local Structure of Compactified Jacobians}.
\newblock {\em Proc. Lond. Math. Soc. 110}, 2 (2014), 510--542.

\bibitem{casalainakassviviani}
{\sc Casalaina-Martin, S., Kass, J.~L., and Viviani, F.}
\newblock {The Singularities and Birational Geometry of the Compactified Universal Jacobian}.
\newblock {\em J. Algebr. Geom. 4}, 3 (2017), 353--393.

\bibitem{conradkm}
{\sc Conrad, B.}
\newblock {Keel--Mori Theorem via Stacks}, 2005.

\bibitem{dolgachevhu}
{\sc Dolgachev, I., and Hu, Y.}
\newblock {Variation of Geometric Invariant Theory Quotients}.
\newblock {\em Pub. Math. IHÉS 87}, 1 (1998), 5--51.

\bibitem{dudin}
{\sc Dudin, B.}
\newblock {Compactified Universal Jacobian and the Double Ramification Cycle}.
\newblock {\em Int. Math. Res. Not. 2018}, 8 (2018), 2416--2446.

\bibitem{estevesreljacobian}
{\sc Esteves, E.}
\newblock {Compactifying the Relative Jacobian over Families of Reduced Curves}.
\newblock {\em Trans. Am. Math. Soc. 353}, 8 (2001), 3045--3095.

\bibitem{fringuelli}
{\sc Fringuelli, R.}
\newblock {The Picard Group of the Universal Moduli Space of Vector Bundles on Stable Curves}.
\newblock {\em Adv. Math. 336\/} (2018), 477--557.

\bibitem{fultonpandharipande}
{\sc Fulton, W., and Pandharipande, R.}
\newblock {Notes on Stable Maps and Quantum Cohomology}.
\newblock {\em arXiv preprint alg-geom/9608011\/} (1996).

\bibitem{gieseker}
{\sc Gieseker, D.}
\newblock {\em {Lectures on Moduli of Curves}}.
\newblock Tata Institute Lectures on Mathematics and Physics. Springer-Verlag, 1982.

\bibitem{grebrosstoma}
{\sc Greb, D., Ross, J., and Toma, M.}
\newblock {Variation of Gieseker moduli spaces via quiver GIT}.
\newblock {\em Geometry \& Topology 20}, 3 (2016), 1539--1610.

\bibitem{grebrosstomamaster}
{\sc Greb, D., Ross, J., and Toma, M.}
\newblock {A Master Space for Moduli Spaces of Gieseker-Stable Sheaves}.
\newblock {\em Transformation Groups 24}, 2 (2019), 379--401.

\bibitem{ega3_2}
{\sc Grothendieck, A.}
\newblock {\'{E}l\'ements de G\'eom\'etrie Alg\'ebrique III. \'{E}tude Cohomologique des Faisceaux Coh\'erents, Seconde Partie}.
\newblock {\em Pub. Math. IHÉS 17\/} (1963), 5--91.

\bibitem{hall}
{\sc Hall, J.}
\newblock {Moduli of Singular Curves}.
\newblock {\em arXiv preprint arXiv:1011.6007\/} (2010).

\bibitem{holmeskasspagani}
{\sc Holmes, D., Kass, J.~L., and Pagani, N.}
\newblock {Extending the Double Ramification Cycle using Jacobians}.
\newblock {\em Eur. J. Math. 4}, 3 (2018), 1087--1099.

\bibitem{hurelative}
{\sc Hu, Y.}
\newblock {Relative Geometric Invariant Theory and Universal Moduli Spaces}.
\newblock {\em Int. J. Math. 7}, 2 (1996), 151--181.

\bibitem{huybrechts_lehn}
{\sc Huybrechts, D., and Lehn, M.}
\newblock {\em {The Geometry of Moduli Spaces of Sheaves}}, 2~ed.
\newblock Cambridge Mathematical Library. Cambridge University Press, 2010.

\bibitem{kptheta}
{\sc Kass, J.~L., and Pagani, N.}
\newblock {Extensions of the Universal Theta Divisor}.
\newblock {\em Adv. Math. 321\/} (Dec 2017), 221--268.

\bibitem{kasspagani}
{\sc Kass, J.~L., and Pagani, N.}
\newblock {The Stability Space of Compactified Universal Jacobians}.
\newblock {\em Trans. Am. Math. Soc. 372}, 7 (June 2019), 4851--4887.

\bibitem{kingquiverreps}
{\sc King, A.}
\newblock {Moduli of Representations of Finite Dimensional Algebras}.
\newblock {\em Quart. J. Math. 45}, 4 (1994), 515--530.

\bibitem{kollar}
{\sc Koll{\'a}r, J.}
\newblock {Projectivity of Complete Moduli}.
\newblock {\em J. Diff. Geom. 32}, 1 (July 1990), 235--268.

\bibitem{melopicard}
{\sc Melo, M.}
\newblock {Compactified Picard Stacks over $\overline{\mathcal{M}}_g$}.
\newblock {\em Math. Z. 263}, 4 (2009), 939--957.

\bibitem{melopicardmarked}
{\sc Melo, M.}
\newblock {Compactified Picard Stacks over the Moduli Stack of Stable Curves with Marked Points}.
\newblock {\em Adv. Math. 226}, 1 (2011), 727--763.

\bibitem{melocuj}
{\sc Melo, M.}
\newblock {Compactifications of the Universal Jacobian over Curves with Marked Points}.
\newblock {\em arXiv preprint arXiv:1509.06177\/} (2015).

\bibitem{tropunivjacobian}
{\sc Melo, M., Molcho, S., Ulirsch, M., and Viviani, F.}
\newblock {Tropicalization of the Universal Jacobian}.
\newblock {\em Epijournal Geo. Alg. 6}, 15 (2022).

\bibitem{meloviviani}
{\sc Melo, M., and Viviani, F.}
\newblock {The Picard Group of the Compactified Universal Jacobian}.
\newblock {\em Doc. Math. 19\/} (2014), 457--507.

\bibitem{mukaimoduli}
{\sc Mukai, S.}
\newblock {\em {An Introduction to Invariants and Moduli}}, vol.~81 of {\em Cambridge Studies in Advanced Mathematics}.
\newblock Cambridge University Press, 2003.

\bibitem{mfk}
{\sc Mumford, D., Fogarty, J., and Kirwan, F.}
\newblock {\em {Geometric Invariant Theory}}, {Third}~ed.
\newblock Ergebnisse der Mathematik und Ihrer Grenzgebiete. Springer-Verlag, 1994.

\bibitem{odaseshadri}
{\sc Oda, T., and Seshadri, C.}
\newblock {Compactifications of the Generalized Jacobian Variety}.
\newblock {\em Trans. Am. Math. Soc. 253\/} (1979), 1--90.

\bibitem{paganitommasi}
{\sc Pagani, N., and Tommasi, O.}
\newblock {Geometry of Genus One Fine Compactified Universal Jacobians}.
\newblock {\em Int. Math. Res. Not. 0\/} (04 2022).

\bibitem{pand}
{\sc Pandharipande, R.}
\newblock {A Compactification over $\overline{M_g}$ of the Universal Moduli Space of Slope-Semistable Vector Bundles}.
\newblock {\em J. Am. Math. Soc. 9}, 2 (1996), 425--471.

\bibitem{schmitthilbert}
{\sc Schmitt, A.}
\newblock {The Hilbert Compactification of the Universal Moduli Space of Semistable Vector Bundles over Smooth Curves}.
\newblock {\em J. Diff. Geom. 66}, 2 (2004), 169--209.

\bibitem{schmittrgitremark}
{\sc Schmitt, A.}
\newblock {A Remark on Relative Geometric Invariant Theory for Quasi-Projective Varieties}.
\newblock {\em Math. Nachr. 292}, 2 (2019), 428--435.

\bibitem{seshadri}
{\sc Seshadri, C.~S.}
\newblock {\em {Fibr\'es Vectoriels sur les Courbes Alg\'ebriques}}.
\newblock No.~96 in Ast\'erisque. Soci\'et\'e Math\'ematique de France, 1982.

\bibitem{simpson}
{\sc Simpson, C.}
\newblock {Moduli of Representations of the Fundamental Group of a Smooth Projective Variety I}.
\newblock {\em Pub. Math. IHÉS 79\/} (1994), 47--129.

\bibitem{teixidor}
{\sc Teixidor~i Bigas, M.}
\newblock {Compactifications of Moduli Spaces of (Semi) Stable Bundles on Singular Curves: Two Points of View}.
\newblock {\em Coll. Math. 49}, 2 (1998), 527--548.

\bibitem{thaddeus}
{\sc Thaddeus, M.}
\newblock {Geometric Invariant Theory and Flips}.
\newblock {\em J. Am. Math. Soc. 9}, 3 (1996), 691--723.

\bibitem{stacks-project}
{\sc {The Stacks Project Authors}}.
\newblock {Stacks Project}.

\end{thebibliography}

\end{document}